\def\Xint#1{\mathchoice
   {\XXint\displaystyle\textstyle{#1}}%
   {\XXint\textstyle\scriptstyle{#1}}%
   {\XXint\scriptstyle\scriptscriptstyle{#1}}%
   {\XXint\scriptscriptstyle\scriptscriptstyle{#1}}%
   \!\int}
\def\XXint#1#2#3{{\setbox0=\hbox{$#1{#2#3}{\int}$}
     \vcenter{\hbox{$#2#3$}}\kern-.5\wd0}}
\def\dashint{\Xint-}
\numberwithin{equation}{section} 
\newcounter{marnote}
\newcommand{\brackets}[1]{\left( #1 \right)}
\newcommand{\abs}[1]{\left| #1 \right|} 
\renewcommand{\d}[1]{\, \mathrm{d} #1} 
\newcommand{\pd}[2]{\frac{\partial #1}{\partial #2}}
\newcommand{\grad}[1]{\nabla #1} 
\renewcommand{\div}[1] {\nabla \cdot #1} 
\newcommand{\beq}{\begin{equation}}
\newcommand{\eeq}{\end{equation}}
\newcommand{\R}{\mathbb{R}}
\newcommand{\eps}{\varepsilon}
\let\baraccent=\= 
\renewcommand{\=}[1]{\stackrel{#1}{=}} 
\newtheorem{theorem}{Theorem}[section]
\newtheorem{lemma}{Lemma}[section]
\theoremstyle{definition}
\theoremstyle{Theorem}
\newtheorem{rmk}{Remark}[section]
\DeclareMathOperator{\diam}{diam}
\DeclareMathOperator{\dist}{dist}
\title{\bf Edge domain walls in ultrathin exchange-biased films}
\author[1]{Ross G. Lund} 
\author[1]{Cyrill B. Muratov}
\author[2]{Valeriy V. Slastikov}
\affil[1]{\em Department of Mathematical Sciences, New Jersey
    Institute of Technology, Newark, NJ 07102, USA} 
\affil[2]{\em School of Mathematics, University of Bristol,
    Bristol BS8 1TW, UK}
\begin{document}
\maketitle

\begin{abstract}
  We present an analysis of edge domain walls in exchange-biased
  ferromagnetic films appearing as a result of a competition between
  the stray field at the film edges and the exchange bias field in the
  bulk. We introduce an effective two-dimensional micromagnetic energy
  that governs the magnetization behavior in exchange-biased materials
  and investigate its energy minimizers in the strip geometry. In a
  periodic setting, we provide a complete characterization of global
  energy minimizers corresponding to edge domain walls. In particular,
  we show that energy minimizers are one-dimensional and do not
  exhibit winding.  We then consider a particular thin film regime for
  large samples and relatively strong exchange bias and derive a
  simple and comprehensive algebraic model describing the limiting
  magnetization behavior in the interior and at the boundary of the
  sample. Finally, we demonstrate that the asymptotic results obtained
  in the periodic setting remain true in the case of finite
  rectangular samples.
\end{abstract}

\section{Introduction}
\label{sec:introduction}


Ferromagnetic films and multilayers are fundamental nanostructures
widely used in present day magnetoelectronics devices \cite{prinz98}.
As such, they have been the subject of intensive investigations over
the last two decades in the engineering, physics and applied
mathematics communities
\cite{hubert,bader10,dennis02,fidler00,desimone06r}. Some of the
highlights of these activities include the discoveries of giant
magnetoresistance, spin-transfer torque, spin-orbit coupling and the
spin-Hall effect \cite{bader10,brataas12, soumyanarayanan16,
  hellman17}. These new physical phenomena have lead to the design of
such technological applications as magnetic sensors, actuators,
high-density magnetic storage devices and non-volatile computer
memory.

Surface and interfacial effects play a dominant role and are
responsible for determining many properties of the nanostructured
ferromagnetic materials \cite{hubert,dennis02,hellman17}. These
phenomena become increasingly important in the case of ultrathin films
and multilayers. One basic example of such nanostructures is given by
exchange-biased materials, which consist of a ferromagnetic film on
top of an antiferromagnetic layer \cite{nogues05}. As a consequence of
an exchange coupling between the two layers, the magnetization in the
ferromagnetic film experiences a net bias induced by the magnetization
at the interlayer interface, which furnishes the free layer with an
effective unidirectional anisotropy.  Additionally, nanostructure
edges may also drastically change the equilibrium and the dynamic
behaviors of the magnetization.  For instance, the nanostructure edges
often determine the mechanism of the magnetization reversal process
\cite{hubert, e03,mo:jap08}. However, despite the importance of edge
effects there exist just a handful of rigorous analytical studies
characterizing the magnetization behavior near the film edges
\cite{kohn05arma, kurzke06, moser04, lms:non18, ms:prsla16}.

Formation of {\it edge domain walls} is an important manifestation of
edge effects observed in ferromagnetic films, double layers and
exchange-biased materials \cite{hornreich63,hornreich64,wade64,
  ruhrig90,mattheis97,cho99,hubert,dennis02,rebousas09}. Edge domain
walls appear as the result of a competition between magnetostatic
energy dominating near the edges and the anisotropy or bias field
effects in the bulk, leading to a mismatch in the preferred
magnetization directions near and far from the film edges. It is well
known that in ultrathin ferromagnetic films without perpendicular
magnetic anisotropy the magnetization prefers to stay almost entirely
in the film plane.  At the same time, the magnetization tends to stay
parallel to the film edge even if the magnetocrystalline anisotropy or
the bias field favor a different magnetization direction in the
interior. This effect is due to the stray field energy which produces
a significant contribution near the sample edges
\cite{kohn05arma}. Inside the sample, the bias field and/or
magnetocrystalline anisotropy dominate the micromagnetic energy,
favoring a single domain state. When these effects are sufficiently
strong, they may also influence the magnetization behavior close the
sample boundary.  As a result of the competition between the stray
field and anisotropy/exchange bias energies, also taking into account
the exchange energy, a transition layer near the edge, called edge
domain wall, is formed. Although this simple phenomenological
explanation gives an intuitive picture, apart from a few ansatz-based
studies in the physics literature \cite{hornreich63, hornreich64,
  nonaka85, hirono86} there is currently little quantitative
understanding of this phenomenon.

The goal of this paper is to understand the formation of edge domain
walls in exchange-biased materials, viewed as minimizers of the
micromagnetic energy. We are interested in soft ultrathin
ferromagnetic films in the presence of a strong exchange bias
field. Our analysis is based on a reduced two-dimensional
micromagnetic energy with magnetization vector constrained to lie in
the film plane, which is well known to adequately describe the
magnetization behavior in ultrathin ferromagnetic films
\cite{mo:jcp06,kohn05arma,desimone06r}. Since we are concerned with
the magnetization behavior near the edges, we consider one of the
simplest and yet application relevant geometries, namely, that of a
ferromagnetic strip. As described earlier, in this geometry the
magnetization inside the strip aligns with the direction of the bias
field, but at the edges it tends to align along the fixed edge
direction. Typically, there is a misalignment between these two
directions which, with the help of the exchange energy, results in the
formation of a boundary layer near the edge (see
Fig. \ref{fig:stripnum}). Let us stress that the situation considered
here is very different from the case treated in \cite{kohn05arma},
where the magnetization behavior at the boundary is controlled by the
magnetization in the interior through the trace theorem.  In larger
ferromagnetic samples considered here the exchange energy does not
impose enough control over magnetization variation. This results in
the detachment of the trace of the interior magnetization profile from
the magnetization at the sample boundary. In particular, the actual
magnetization behavior at the boundary is determined in a non-trivial
way through the competition of exchange bias, stray field and bulk
exchange energies.

\begin{figure}
  \centering
  \includegraphics[width=14cm]{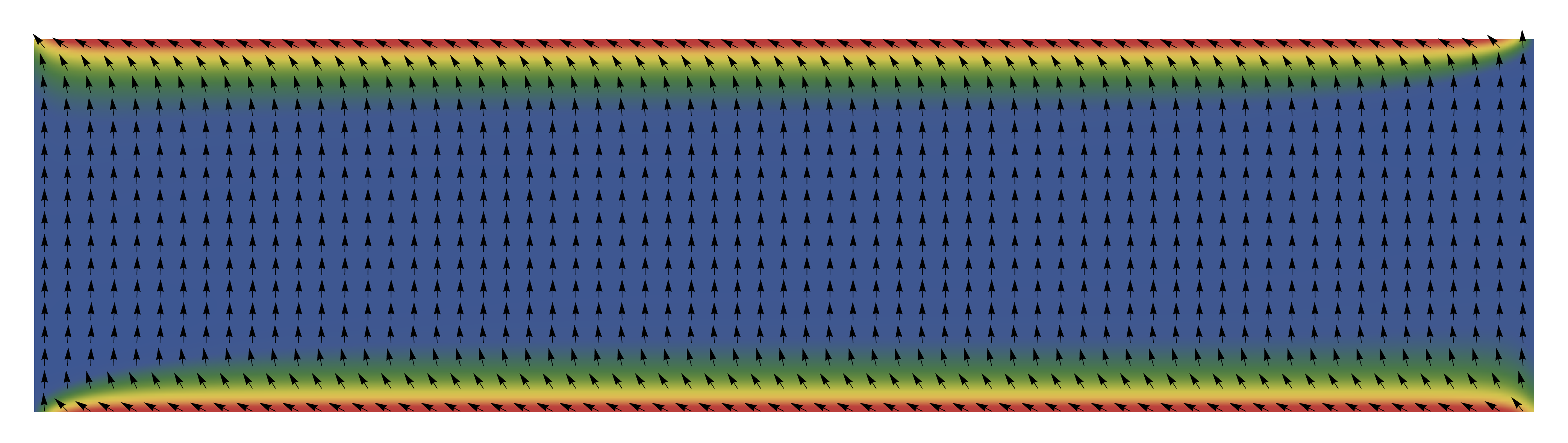}
  \caption{A remanent magnetization in an exchange-biased permalloy
    film (exchange constant $A = 1.3 \times 10^{-11}$ J/m, saturation
    magnetization $M_s = 8 \times 10^5$ A/m, exchange bias field
    $H = 8.91 \times 10^3$ A/m) with dimensions
    $3.46\mu$m$\times 0.87\mu$m$\times 6$nm. Result of a micromagnetic
    simulation, using the code developed in \cite{mo:jcp06}. The bias
    field is pointing up. Edge domain walls exhibiting partial
    alignment of the magnetization with the sample edges may be seen
    at the top and the bottom boundary.}
  \label{fig:stripnum}
\end{figure}


Our analysis of the above problem in nanomagnetism proceeds as
follows. First, we introduce a two-dimensional model, see \eqref{E},
which governs the magnetization behavior in exchange-biased ultrathin
nanostructures and accounts for the presence of nanostructure
edges. This model is an extension of a reduced thin film model
introduced in the context of Ginzburg-Landau systems with dipolar
repulsion that provides matching upper and lower bounds on the full
three-dimensional energy for vanishing film thickness, together with
universal error estimates \cite{m:cvar}. Instead of treating the
magnetization as a discontinuous vector field having length one inside
and zero outside a three-dimensional sample, we consider a
two-dimensional domain occupied by the film in the plane (viewed from
the top) and introduce a narrow band near the film edge, comparable in
size to the film thickness. In this band the magnetization is
regularized for the stray field calculation, using a smooth cut-off
function, see \eqref{mdelta}. Note that the magnetization behavior is
asymptotically independent of the choice of the cutoff.  We then
proceed to analyse global energy minimizers associated with the energy
in \eqref{E} in the presence of strong exchange bias in the direction
normal to the strip edge.

We point out that the obtained non-convex, non-local, vectorial
variational problem in full generality poses a formidable challenge to
analysis. In particular, the system under consideration is known to
exhibit winding magnetization configurations \cite{cho99}, which
further complicates the situation. Nevertheless, within a periodic
setting we are able to provide a complete characterization of global
energy minimizers of the energy in \eqref{E}. We first show that the
energy minimizing configurations are one-dimensional, i.e., in those
configurations the magnetization depends only on the distance to the
edges. Furthermore, the magnetization vector does not exhibit winding
and may rotate by at most 90 degrees away from the bias field
direction. Thus, in the periodic setting the task of globally
minimizing the energy \eqref{E} reduces to a particular
one-dimensional variational problem. For the latter, we prove that
there exist at most three minimizers, which are smooth solutions to a
non-local Euler-Lagrange equation and possess $C^2$ regularity up to
boundary, see Theorem \ref{t:1d}.

We then consider a particular thin film regime, in which the sample
lateral dimensions also go to infinity with an appropriate rate, while
the exchange bias, bulk exchange and magnetostatic energies all
balance near the strip edge, see \eqref{Rescaled}, \eqref{lamdeleps}
and \eqref{hepslnln}. Still within the periodic setting, we then
derive a simple and comprehensive {\it algebraic} model describing the
magnetization behavior in the interior and at the boundary of the
ferromagnet in the regime of strong exchange bias in the limit as the
film thickness goes to zero, see Theorem \ref{t:E1deps}. This reduced
model uniquely determines the magnetization trace at the film edge for
the minimizers, see Theorem \ref{t:1dmin}. We also show that after a
blowup the magnetization profile near the edge converges uniformly to
an explicit profile in \eqref{thinf}. Finally, we demonstrate that the
asymptotic results for the limit behavior of the energy and the
average trace of the magnetization on the sample edges obtained in the
periodic setting remain true in the case of rectangular domains, see
Theorem \ref{t:rect}.

Our proofs in the periodic setting rely on a sharp, strict lower bound
for the energy in \eqref{E} of a two-dimensional magnetization
configuration in terms of the energy in \eqref{E1dm2} evaluated on the
averages along the direction of the strip of the component of the
magnetization normal to the strip edge. For the magnetostatic part of
the energy, the corresponding lower bound is obtained, using Fourier
techniques. For the local part of the energy, we use its convexity as
a function of that component in the absence of winding. The latter is
ensured by the choice of the reconstruction of the magnetization
vector from the average of its component in the direction normal to
the edge. We note that this argument crucially uses the specific form
of the exchange bias energy and does not apply in the case of the
uniaxial anisotropy considered by us in \cite{lms:non18}. Once the
one-dimensional nature of the minimizers has been established, the
derivation of the Euler-Lagrange equation and the regularity still
requires a delicate analysis due to the fact that nonlocality remains
intertwined with the rest of the terms, producing an
integro-differential equation. Additionally, under our Lipschitz
assumption on the cutoff function, which also allows to mimic films
with tapering edges, the non-local term may produce singularities near
the sample boundaries, limiting the regularity of the minimizers up to
the boundary.  Finally, using the Euler-Lagrange equation we are able
to show that the tangential component of the magnetization in a
minimizer does not change sign. This allows us to take advantage of
the convexity of the one-dimensional energy as a function of the
normal component under this condition to establish the precise
multiplicity of the minimizers.

For our asymptotic analysis, we first remark that in our problem it is
necessary to go beyond the magnetostatics contribution at the sample
edges considered in \cite{kohn05arma}. Indeed, since the magnetization
in the sample interior converges to a constant vector, the net
magnetic line charge density at the strip edges is constant to the
leading order. Therefore, one needs to perform an asymptotic expansion
to extract the leading order non-trivial contribution associated with
the charge distribution between the strip edge and the strip interior
in the boundary layer near the edge. After subtracting the leading
order constant, we deduce the asymptotic behavior of the minimal
energy and the energy minimizers by establishing matching asymptotic
upper and lower bounds on the energy. The lower bounds are a
combination of the Modica-Mortola type bounds for the local part of
the energy, while for the magnetostatic energy we use carefully chosen
test potentials in a duality formulation that goes back to Brown
\cite{brown}. In turn, the upper bounds rely on explicit
Modica-Mortola transition layer profiles with an optimized boundary
trace. Finally, we show that the presence of the additional edges
parallel to the bias direction does not affect the asymptotic behavior
of the energy for rectangular samples.

Our paper is organized as follows. In Sec. \ref{sec:model}, we present
the two-dimensional model analyzed throughout the paper and discuss
the relevant scaling regime. In Sec. \ref{sec:main-results}, we state
our main results. In Sec. \ref{sec:proof-theor-reft:1d}, we present
the proof of Theorem \ref{t:1d} that characterizes the energy
minimizers in the periodic setting. In Sec. \ref{sec:proof-theor-23},
we present the proofs of Theorems \ref{t:E1deps} and \ref{t:1dmin}
about the asymptotic behavior of the minimizers in the periodic
setting in the considered regime. Finally, in Sec. \ref{sec:rect}, we
present the proof of Theorem \ref{t:rect} about the asymptotics of the
minimizers on a rectangular domain.

\paragraph{Acknowledgements.}

R. G. Lund and C. B. Muratov were supported, in part, by NSF via
grants DMS-1313687 and DMS-1614948.  V. Slastikov would like to
acknowledge support from EPSRC grant EP/K02390X/1 and Leverhulme grant
RPG-2014-226.

\section{Model}
\label{sec:model}

In this paper we investigate ultrathin ferromagnetic films with
negligible magnetocrystalline anisotropy and in the presence of an
exchange bias, which manifests itself as a Zeeman-like term in the
energy. As our films of interest are only a few atomic layers thin, it
is appropriate to model them using a two-dimensional micromagnetic
framework. Furthermore, in the absence of perpendicular magnetic
anisotropy the equilibrium magnetization vector is constrained to lie
almost entirely in the film plane
\cite{desimone00,kohn05,garcia01,mo:jcp06}. Therefore, in the case of
an extended film the magnetization state may be described by a map
$m : \R^2 \to \mathbb S^1$, with the associated energy (after a
suitable rescaling) given by
\begin{align}
  \label{ER2}
  E(m) = \frac12 \int_{\R^2} \Big( |\nabla m|^2 + h | m - e_2 |^2 
  \Big) \d x + {\delta \over 8 \pi} \int_{\R^2} \int_{\R^2}
  {\div{m}(x) \div{m}(y) \over |x - y|} \d x \d y. 
\end{align}
Here, the terms in the order of appearance are: the exchange energy,
the Zeeman-like exchange bias energy due to an adjacent fixed magnetic
layer, and the stray field energy, respectively
\cite{hubert,desimone00,nogues05}. In writing \eqref{ER2} we
measured lengths in the units of the exchange length and introduced
the {\em effective} dimensionless film thickness $\delta > 0$ that
plays the role of the strength of the magnetostatic interaction. Also,
we have introduced the dimensionless constant $h > 0$ that
characterizes the strength of the exchange bias along the vector
$e_2$, the unit vector in the direction of the second coordinate
axis. Note that due to rotational symmetry of the exchange and
magnetostatic energies, the choice of the direction in the exchange
bias term is arbitrary. Observe that by positive definiteness of the
stray field term the unique global minimizer for the energy in
\eqref{ER2} is given by the monodomain state $m(x) = e_2$.

\subsection{Energy of a finite sample}

We now turn our attention to films of finite extent, i.e., when the
ferromagnetic material occupies a bounded domain in the plane,
$D\subset \R^2$. One would naturally expect that the above model can
be easily modified to describe the finite sample case by restricting
the domains of integration to $D$. However, this is not the case as
such a model would miss the contribution of the edge charges to the
magnetostatic energy \cite{kohn05arma}. On the other hand, a simple
extension of the magnetization $m$ from $D$ to the whole of $\R^2$ by
zero and treating $\nabla \cdot m$ distributionally would not work in
general, as in this case the magnetostatic energy becomes infinite
unless the magnetization is tangential to the boundary $\partial D$ of
the sample (for further discussion see \cite{lms:non18}). This is due
to the fact that a discontinuity in the normal component of the
magnetization at the sample edge produces a divergent contribution to
the magnetostatic energy. Physically, however, the magnetization near
the edges of the film is smooth on the atomic scale, which for
ultrathin films is comparable to the film thickness
$\delta$. Therefore, we can introduce a regularization of the
magnetization \beq \label{mdelta0} m_\delta (x) := \eta_\delta (x)
m(x) \qquad x \in D, \eeq where
\begin{align}
  \label{mdelta}
  \eta_\delta(x):= \eta\brackets{\frac{\dist(x, \partial D)}{\delta}},
\end{align}
and $\eta \in C^\infty(\overline{\R}^+)$ satisfies $\eta'(t) > 0$ for
all $0 < t < 1$, $\eta(0)=0$ and $\eta(t)=1$ for all $t \geq 1$.  This
defines a Lipschitz cutoff at scale $\delta$ near $\partial D$ to smear the
film edge on the scale of its thickness. The precise choice of the
cutoff function will be unimportant.  The two-dimensional
micromagnetic energy modelling the ultrathin ferromagnetic film of
finite extent is now defined as
\begin{align}
  \label{E}
  E(m) =
  \frac{1}{2} \int_{D} \brackets{\abs{\grad {m}}^2 + h\abs{m-e_2}^2
  }\d x + \frac{\delta}{8\pi} \int_{D}\int_{D} \frac{\div m_\delta(x)
  \div m_\delta(y)}{\abs{x-y}}\d x \d y.
\end{align}
This energy is the starting point of our investigation. 

\subsection{Energy in a periodic setting}
\label{sec:periodic}

We are also interested in a particular situation in which the domain
has the shape of an infinite strip along the $x_1$ direction, of width
$b > 0$; this situation is not immediately covered by the previous
discussion. We assume periodicity in $x_1$ with period $a > 0$ and
define the energy per period:
\begin{align}
  \label{Esh}
  E^\#(m) =
  \frac{1}{2} \int_{D} \brackets{\abs{\grad {m}}^2 + h\abs{m-e_2}^2
  }\d x + \frac{\delta}{8\pi} \int_{D}\int_{\R \times (0,b)}
  \frac{\div m_\delta(x) \div m_\delta(y)}{\abs{x-y}}\d y \d x,
\end{align}
where $D = (0,a) \times (0,b)$. Note that this energy is
translationally invariant in the $x_1$-direction. In particular,
one-dimensional magnetization configurations independent of $x_1$ are
natural candidates for minimizers of $E^\#$. We point out that
choosing the strip axis to lie along the direction $e_1$
(perpendicular to $e_2$) creates a competition between the exchange
bias favoring $m$ to lie along $e_2$ and the shape anisotropy forcing
$m$ to lie along $e_1$, which makes this configuration the most
interesting one.

\subsection{Connection to three-dimensional micromagnetics}
\label{sec:connection-3-d}

Let us point out that the energy in \eqref{E} may also be justified in
some regimes by considering suitable thin film limits of the full
three-dimensional micromagnetic energy \beq \mathcal{E}(\mathbf m) =
\frac12\int_{\Omega} \brackets{\abs{\grad {\mathbf m}}^2 + h |\mathbf
  m - e_2|^2} \d x + \frac{1}{8\pi} \int_{\R^3}\int_{\R^3} \frac{\div
  \mathbf m(x) \div \mathbf m(y)}{\abs{x-y}}\d x \d y,
\label{Energy}
\eeq where $\Omega \subset \R^3$ is the domain occupied by the
material and $\mathbf m : \Omega \to \mathbb S^2$, with $\mathbf m$
extended by zero outside $\Omega$ and $\nabla \cdot m$ understood
distributionally. Typically when considering thin films the domain
$\Omega$ is taken to be a cylinder $\Omega = D \times (0,\delta)$,
where $D \subset \R^2$ is the base of the film and $\delta$ is the
film thickness \cite{desimone00}. In reality the film edges are never
straight, but vary on the scale of the film thickness $\delta$, and
averaging over the thickness we recover an analogue of the regularized
magnetization $m_\delta$ introduced in \eqref{mdelta0} (for further
discussion in a related context, see \cite{m:cvar}). Indeed, when
$0 < \delta \lesssim 1$, the out-of-plane component of the
magnetization $\mathbf m(x) \in \mathbb S^2$ is strongly penalized,
forcing the magnetization to be restricted to the equator of
$\mathbb S^2$, identified with $\mathbb S^1$. Furthermore, the
magnetization vector will be effectively constant on the length scale
$\delta$. Therefore, to the leading order in $\delta$ we will have
\begin{align}
  \label{mS2S1}
  \mathbf m(x_1, x_2, x_3) = (m(x_1, x_2), 0) \qquad
  m : \R^2 \to \mathbb S^1,   
\end{align}
and
$\mathcal E(\mathbf m) \simeq E(m) \delta$, where $m_\delta$ in
\eqref{E} is defined, using a cutoff function $\eta_\delta$ related to
the shape of the film edge (see also \cite{slastikov05}).

\subsection{Thin film regime}
\label{sec:thin-film-regime}

We now introduce a particular asymptotic regime in which edge domain
walls bifurcate from the monodomain state $m = e_2$ as global energy
minimizers when the effective film thickness $\delta \to 0$.  We note
that for all other parameters fixed the minimizer of the
two-dimensional energy in \eqref{E} or the three-dimensional energy in
\eqref{Energy} would converge to the monodomain state (for a closely
related result, see \cite{m:cvar}). Therefore, in order to observe
non-trivial minimizers in the thin film limit the lateral size of the
ferromagnetic sample must diverge with an appropriate rate
simultaneously with $\delta \to 0$. To capture this balance, we
introduce a small parameter $\eps > 0$ corresponding to the inverse
lateral size of the ferromagnetic sample, i.e.,
$\diam(D_\eps) = O(\eps^{-1})$ and set $\delta = \delta_\eps \to 0$ as
$\eps \to 0$. We also allow $h = h_\eps$ to depend on $\eps$. We then
have a one-parameter family of functionals, parametrized by $\eps$ and
given by $E(m) = E_\eps^0(m)$, where \beq E^0_\eps(m) = \frac{1}{2}
\int_{D_\eps} \brackets{\abs{\grad {m}}^2 + h_\eps \abs{m-e_2}^2 }\d x
+ \frac{\delta_\eps}{8\pi} \int_{\R^2}\int_{\R^2} \frac{\div
  m_{\delta_\eps}(x) \div m_{\delta_\eps}(y)}{\abs{x-y}}\d x \d y,
\eeq with a slight abuse of notation, assuming the cutoff function in
\eqref{mdelta} is defined, using $D_\eps$ instead of $D$.  If we then
rescale $D_\eps$ to work on an $O(1)$ domain $D$, we obtain that
$E^0_\eps(m) = \eps^{-1} E_\eps(m(\cdot / \eps))$, where \beq
E_\eps(m) := \frac{1}{2} \int_{D} \brackets{\eps\abs{\grad {m}}^2 +
  \frac{h_\eps}{\eps}\abs{m-e_2}^2}\d x + \frac{\delta_\eps}{8\pi}
\int_{\R^2}\int_{\R^2} \frac{\div m_{\eps \delta_\eps}(x) \div m_{\eps
    \delta_\eps}(y)}{\abs{x-y}}\d x \d y.
\label{Rescaled}
\eeq 

To proceed, we take, once again, the domain $D$ to be a rectangle,
$D = (0, a) \times (0, b)$, and consider two magnetization
configurations as competitors. The first one is the monodomain state
$m^{(1)} = e_2$ and the second one is a profile $m^{(2)}$ in which the
magnetization rotates smoothly from $e_2$ in
$(0, a) \times (\eps L_\eps, b - \eps L_\eps)$ to $e_1$ at $x_2 = 0$
and $x_2 = b$ within layers of width $\eps L_\eps$ near the top and
bottom edges of $D$ such that
$\eps \delta_\eps \ll \eps L_\eps \ll 1$. Note that while in the
former the edge magnetic charges are concentrated within layers of
thickness $\delta_\eps$ (in the original, unscaled variables), in the
latter the edge magnetic charges are spread within layers of width
$L_\eps$ (again, before rescaling).

It is not difficult to see that as $\eps \to 0$ we have
\begin{align}
  \label{Eepsm1m2}
  E_\eps(m^{(1)}) \simeq {a \delta_\eps \over 2 \pi} | \ln 
  \eps \delta_\eps|, \qquad E_\eps(m^{(2)}) \simeq a \left( {c_1
  \over
  L_\eps} + c_2 h_\eps L_\eps 
  \right) +  {a \delta_\eps \over 2 \pi} |\ln 
  \eps L_\eps|,
\end{align}
for some $c_{1,2} > 0$ depending on the choice of the transition
profile.  Clearly, when the exchange bias field $h_\eps = O(1)$ the
first two terms give an $O(1)$ contribution to the energy
$E_\eps(m^{(2)})$. Therefore, in order for the energy of the edge
charges $E(m^{(1)})$ in a monodomain state to be comparable with the
local contributions to the energy of edge domain walls one needs to
choose
\begin{align}
  \label{lamdeleps}
  \delta _\eps = {\lambda \over |\ln \eps|},
\end{align}
for some $\lambda > 0$ playing the role of the renormalized effective
film thickness. Notice that this scaling has recently appeared in a
different context in the studies of thin ferromagnetic films with
perpendicular magnetic anisotropy \cite{kmn:arma}. At the same time,
according to \eqref{Eepsm1m2} the leading order contribution to the
magnetostatic energy of the edge charges for the optimal choice of the
edge domain wall width $L_\eps = O(1)$ turns out to be the same as the
energy of the monodomain state. Therefore, for $h_\eps = O(1)$ it is
not energetically advantageous to form edge domain walls. These walls
would thus form at lower values of the exchange bias field $h_\eps$.

In order to balance the energies of the two configurations above for
$\delta_\eps$ given by \eqref{lamdeleps} and $h_\eps \ll 1$, we need
to evaluate the difference between the two at optimal wall width
$L_\eps = O(h_\eps^{-1/2})$. Matching the wall energy
$O(h_\eps^{1/2})$ with the energy difference
$O(\delta_\eps \ln (L_\eps / \delta_\eps))$ then yields that one needs
to choose
\begin{align}
  \label{hepslnln}
  h_\eps = \beta\brackets{\frac{\ln |\ln \eps|}{|\ln
    \eps|}}^2 ,
\end{align}
for some $\beta > 0$ playing the role of the renormalized field
strength. The corresponding optimal choice of $L_\eps$ is
$L_\eps = O(|\ln \eps| / (\ln |\ln \eps|))$. Furthermore, under
\eqref{lamdeleps} and \eqref{hepslnln} one would expect that a
transition from the monodomain state to states containing edge domain
walls takes place at some critical value of $\beta$ for fixed value of
$\lambda$ as $\eps \to 0$. Below we will show that this is indeed the
case and identify the critical value of $\beta$.

\section{Statement of results}
\label{sec:main-results}

We now proceed to formulate the main results of this paper. We begin
with the simplest setting, namely that of a periodic magnetization on
a strip oriented normally to the direction of the bias field as
described in Sec. \ref{sec:periodic}. Our main result here is the
identification of one-dimensional edge domain wall profiles as unique
global energy minimizers of the energy $E^\#$ irrespectively of the
relationship between $a$, $b$, $\delta$ and $h$. Throughout the rest
of this paper we always assume that $\delta < b/2$. 

We start by defining the admissible class in which we will seek the
minimizers of $E^\#$ 
\begin{align}
  \label{Aper}
  \mathcal A^\# := \{ m \in H^1_{loc}(\R \times [0,b]; \mathbb S^1): 
  m(x_1 + a, x_2) = m(x_1, x_2) \},
\end{align}
and introduce the representation of the magnetization in $\mathcal
A^\#$ in terms of the angle that $m$ makes with respect to the
$x_2$-axis:
\begin{align}
  \label{mthcossin}
  m = (-\sin \theta, \cos \theta).
\end{align}
We also define, for $\alpha \in (0,1)$, the one-dimensional
half-Laplacian acting on $u \in C^{1,\alpha}([0,b])$ that vanishes at
the endpoints, extended by zero to the rest of $\R$:
\begin{align}
  \label{halfL1d}
  \left( - {d^2 \over dx^2} \right)^{1/2} u(x) := {1 \over \pi}
  \dashint_0^b {u(x) - u(y) \over (x - y)^2} \d y + {b u(x) \over \pi x
  (b - x)} \qquad x \in (0,b).
\end{align}
Finally, with a slight abuse of notation we will use $\eta_\delta(x)$
to define the cutoff as a function of one variable, $x = x_2$ and
extend it by zero outside $(0,b)$.

We have the following basic characterization of the minimizers of
$E^\#$ over $\mathcal A^\#$. 

\begin{theorem}
  \label{t:1d}
  There exist at most three minimizers $m$ of $E^\#$ over
  $\mathcal A^\#$. Each minimizer is one-dimensional, i.e.,
  $m = m(x_2)$, and symmetric with respect to the midline, i.e.,
  $m(x_2) = m(b - x_2)$. Furthermore, $m_2(x_2) \geq 0$ and $m_1(x_2)$
  is either identically zero or does not change sign. In addition, if
  $\theta$ is such that $m$ satisfies \eqref{mthcossin}, then
  $\theta \in C^\infty(0,b) \cap C^2([0,b])$ and satisfies
  \begin{align}
    \label{EL1d}
   0 =  {d^2 \theta \over dx^2} - h \sin \theta + {\delta \over
    2} \eta_\delta \sin \theta \left( - {d^2 \over dx^2}
    \right)^{1/2} \eta_\delta \cos \theta \qquad x \in (0,b),
  \end{align}
  together with $\theta'(0) = \theta'(b) = 0$.
\end{theorem}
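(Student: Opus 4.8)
\emph{Existence and reduction to one dimension.} I would first note that a minimizer of $E^\#$ over $\mathcal A^\#$ exists by the direct method: the exchange term is coercive and sequentially weakly lower semicontinuous, the Zeeman term is weakly continuous, and the magnetostatic term is a nonnegative quadratic form in $\div m_\delta$ that is sequentially weakly lower semicontinuous. The heart of the argument is a sharp, strict lower bound reducing the problem to one dimension. Given $m \in \mathcal A^\#$ with a lift $\theta$ as in \eqref{mthcossin} (taken on the simply connected $\R \times [0,b]$; any nonzero winding of $m$ around the strip is strictly penalized in the exchange energy, so one may assume $\theta$ periodic), I would set $u(x_2) := \frac1a \int_0^a m_2(x_1,x_2)\,dx_1 \in [-1,1]$ and let $\widehat m := (-\sqrt{1-u^2},\,u)$ be the one-dimensional reconstruction with angle $\arccos u \in [0,\pi]$, and then prove $E^\#(m) \ge E^\#(\widehat m)$, with equality only if $m = \widehat m$. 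For the local part: the Zeeman density is $h(1-m_2)$, whose $x_1$-average $h(1-u)$ is exactly that of $\widehat m$, while $\int_D |\grad\theta|^2 \ge \int_D |\partial_{x_2}\theta|^2 \ge a \int_0^b \tfrac{|u'|^2}{1-u^2}\,dx_2 = \int_D |(\arccos u)'|^2$, the middle step following from $\bigl|\overline{\sin\theta\,\partial_{x_2}\theta}\bigr|^2 \le \overline{\sin^2\theta}\cdot\overline{|\partial_{x_2}\theta|^2} \le (1-u^2)\,\overline{|\partial_{x_2}\theta|^2}$ by Cauchy--Schwarz and Jensen ($\overline{\cos^2\theta}\ge u^2$, with bars denoting $x_1$-averages); equality forces $\theta=\theta(x_2)$. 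For the magnetostatic part I would expand $\div m_\delta$ in a Fourier series in $x_1$: the modes $k\ne 0$ give a nonnegative quadratic form, while the $k=0$ mode equals $(\eta_\delta u)'$ and, after renormalizing its logarithmically divergent self-interaction (its total charge vanishes since $\eta_\delta(0)=\eta_\delta(b)=0$) and integrating by parts, yields exactly the half-Laplacian quadratic form of $\eta_\delta u$ associated with \eqref{halfL1d} --- which coincides with the magnetostatic energy of the one-dimensional $\widehat m$; equality forces all higher modes of $\div m_\delta$ to vanish. This shows every minimizer is one-dimensional and free of winding.

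\emph{Convexity, symmetry, and sign constraints.} Since any one-dimensional configuration with $m_2 = u$ has exchange density $\tfrac12 |u'|^2/(1-u^2)$ and magnetostatic energy depending only on $(\eta_\delta u)'$, the problem reduces to minimizing, over $u$ with $|u| \le 1$ and finite energy, the functional $\mathcal G[u]$ whose local part is $\tfrac a2\int_0^b\bigl(\tfrac{|u'|^2}{1-u^2}+2h(1-u)\bigr)\,dx_2$ and whose nonlocal part is a positive multiple of $\delta$ times the half-Laplacian form of $\eta_\delta u$. I would then observe that $\mathcal G$ is strictly convex --- the integrand $(u,v)\mapsto v^2/(1-u^2)$ has positive definite Hessian on $\{|u|<1\}$, the remaining local terms and the constraint are convex, and the nonlocal term is a nonnegative quadratic form in $\eta_\delta u$ --- so $\mathcal G$ has a unique minimizer $u^*$, and reflection invariance forces $u^*(x_2) = u^*(b-x_2)$. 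Replacing $u^*$ by $|u^*|$ does not raise $\mathcal G$ (the Zeeman term drops, the gradient term is unchanged, and the half-Laplacian form does not increase since $\bigl| |a|-|b| \bigr| \le |a-b|$), so uniqueness gives $u^* \ge 0$, i.e.\ $m_2 \ge 0$. The minimizers of $E^\#$ are precisely the one-dimensional configurations with $m_2 = u^*$, differing only through the sign of $m_1$ on the components of $\{u^* < 1\}$; a unique-continuation argument for \eqref{EL1d} (vanishing of $\theta$ and $\theta'$ at a point forces $\theta \equiv 0$ nearby) shows $\{m_1 = 0\}$ has no isolated interior point, so together with symmetry the possible sign patterns reduce to at most three ($m_1 > 0$, $m_1 < 0$, or $m_1 \equiv 0$); hence there are at most three minimizers and $m_1$ is identically zero or of one sign.

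\emph{Euler--Lagrange equation and regularity.} Next I would derive \eqref{EL1d}: perturbing a minimizer by $\theta \mapsto \theta + t\varphi$ within $\mathcal A^\#$ gives its weak form together with the natural conditions $\theta'(0) = \theta'(b) = 0$, the pointwise constraint $m_2 \le 1$ being active only where $\theta = 0$, at which points every term of the first variation already vanishes. For regularity I would bootstrap: $\theta \in H^1 \Rightarrow \eta_\delta\cos\theta \in H^1 \Rightarrow (-d^2/dx^2)^{1/2}(\eta_\delta\cos\theta) \in L^2 \Rightarrow \theta'' \in L^2$, and iterate; away from the endpoints $\eta_\delta\cos\theta$ is as regular as $\theta$ and the half-Laplacian improves regularity, giving $\theta \in C^\infty(0,b)$. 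Up to the endpoints the zero-extension of $\eta_\delta\cos\theta$ has, in general, only a Lipschitz corner at $x_2 = 0, b$, so $(-d^2/dx^2)^{1/2}(\eta_\delta\cos\theta)$ has at worst a logarithmic singularity there; but it is multiplied in \eqref{EL1d} by $\eta_\delta$, which vanishes at the endpoints, so the right-hand side extends continuously to $[0,b]$ and $\theta \in C^2([0,b])$ (sharp in general).

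\emph{Main obstacle.} The hard part will be the sharp and \emph{strict} lower bound of the first step: the Fourier reduction of the magnetostatic energy must be pushed far enough to identify the precise operator of \eqref{halfL1d}, including the renormalization of the divergent $k=0$ mode and the boundary correction from extension by zero, and the local-energy estimate must be arranged so that equality simultaneously captures one-dimensionality and the absence of winding --- this is exactly where the special quadratic form of the exchange bias term enters, and the argument would fail for uniaxial anisotropy. A secondary difficulty is the endpoint regularity: pinpointing the logarithmic singularity of the nonlocal term at the film edge and verifying that the cutoff $\eta_\delta$ cancels it precisely, so that $C^2$ up to the boundary is attained but not exceeded.
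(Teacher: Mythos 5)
Your architecture matches the paper's (averaging $m_2$ in $x_1$ with a Jensen-type bound for the local part, the zeroth Fourier mode bound for the stray field, the reflection competitor for $m_2\ge 0$, convexity in $m_2$, and an Euler--Lagrange bootstrap with an endpoint estimate showing that $\eta_\delta\,(-d^2/dx^2)^{1/2}(\eta_\delta\cos\theta)$ is continuous and vanishes at $0,b$), but the multiplicity count contains a genuine gap. To conclude ``at most three'' you must prove the dichotomy: for a minimizer either $\theta\equiv 0$ or $\theta$ never vanishes on $[0,b]$. If instead $m_2$ touched $1$ at interior points without being identically $1$, then flipping the sign of $m_1=\mp\sqrt{1-m_2^2}$ independently on the components of $\{m_2<1\}$ produces $2^k$ distinct minimizers of equal energy (such flips stay in $H^1$ because they occur at zeros of $\sqrt{1-m_2^2}$). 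Your unique-continuation step assumes that $\theta$ \emph{and} $\theta'$ vanish at the same point, but at a transversal zero of $\theta$ --- exactly the configuration that must be excluded, being an isolated zero of $m_1$ across which the sign may flip --- $\theta'$ does not vanish, so the hypothesis is unavailable; and even the conclusion ``$\{m_1=0\}$ has no isolated interior point'' would not reduce the sign patterns to three (an interior interval where $m_2=1$ still allows four patterns, symmetry of $m_2$ notwithstanding). The paper's key device is absent from your proposal: since the energy \eqref{Eshth} is invariant under $\theta\mapsto|\theta|$, the function $|\theta|$ is itself a minimizer and therefore inherits the $C^2([0,b])$ regularity; at any zero $x_0$ of $|\theta|$ one then has $|\theta|'(x_0)=0$ (interior minimum, or the Neumann condition at an endpoint), and freezing the nonlocal term in \eqref{EL1d} as a continuous coefficient $c(x)$ reduces it to the ODE $\theta''=c(x)\sin\theta$, whose initial-value uniqueness forces $|\theta|\equiv 0$. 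Only after this dichotomy can the convexity argument be applied separately in the classes $m_1>0$ and $m_1<0$, giving exactly three candidates.

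A second, repairable error: your justification of uniqueness of $u^*$ is wrong as stated. The Hessian of $(u,v)\mapsto v^2/(1-u^2)$ has determinant $4v^2/(1-u^2)^{3}$, so it is only positive \emph{semi}definite (degenerate wherever $v=0$); the local part of your $\mathcal G$ is convex but not strictly convex, and the Zeeman part is affine. Strict convexity --- hence uniqueness --- must come from the term you list merely as ``a nonnegative quadratic form'': the form $\int_\R\int_\R (\eta_\delta u(x)-\eta_\delta u(y))^2/(x-y)^2\,dx\,dy$ is strictly convex in $u$ precisely because $\eta_\delta u$ vanishes identically outside $(0,b)$ while $\eta_\delta>0$ inside, so its only null direction is $u=0$; this is how the paper argues with \eqref{E1dm2}. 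Note also that you invoke the convex reduction over all $|u|\le 1$, which additionally requires showing that the exchange energy of an arbitrary one-dimensional $\mathbb S^1$-valued $H^1$ map equals $\int_{\{|m_2|<1\}}|m_2'|^2/(1-m_2^2)\,dx$ (using that derivatives vanish a.e.\ on level sets) and that the integrand stays convex up to $|u|=1$; the paper avoids this by applying convexity only after the sign dichotomy, to profiles with $m_2<1$. Your remaining deviations (the Cauchy--Schwarz/Jensen proof of the local lower bound in place of the paper's convexity argument for $v^2/(1+\eps-u^2)$, and phrasing the equality case of the stray-field bound through vanishing of the higher Fourier modes) are legitimate variants, and the winding remark is harmless since one-dimensionality already follows from the equality case.
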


It is clear that $m = e_2$ is one possibility for a minimizer in
Theorem \ref{t:1d}, which corresponds to the monodomain state. Note
that by \eqref{EL1d} the state $m = e_2$ is always a critical point of
the energy $E^\#$. Furthermore, it is easy to see that $m = e_2$ is a
local minimizer of $E^\#$ if the Schr\"odinger-type operator
\begin{align}
  L = -{d^2 \over dx^2} + V(x), \qquad V(x) := h - {\delta \over 2}
  \eta_\delta(x)  \left( - {d^2 \over dx^2} \right)^{1/2} \eta_\delta(x)  
\end{align}
has only positive eigenvalues when $x \in (0, b)$. The monodomain
state competes with a profile having
$\theta = \theta(x_2) \in (0, \frac12 \pi]$ and another, symmetric
profile obtained by replacing $\theta$ with $-\theta$, both
corresponding to the edge domain walls.

\begin{rmk}
  Observe that by Theorem \ref{t:1d} the minimizers of $E^\#$ do not
  exhibit winding, i.e., the size of the range of $\theta$ associated
  with the minimizer does not reach or exceed $2 \pi$. Notice that a
  priori winding cannot be excluded, since the nonlocal term in the
  energy may favor oscillations of $m$. In fact, winding will be
  required if the minimization of $E^\#$ is carried out over an
  admissible class with a prescribed non-zero winding number across
  the period along $x_1$.
\end{rmk}

We now turn to the regime described in
Sec. \ref{sec:thin-film-regime}, in which edge domain walls emerge as
minimizers of $E^\#$. We begin by introducing a periodic version of
the rescaled energy in \eqref{Rescaled}:
\begin{align}
  \label{Esheps}
  E_\eps^\#(m) :=
  \frac{1}{2} \int_{D} \brackets{\eps \abs{\grad {m}}^2 + {h_\eps
  \over \eps} \abs{m-e_2}^2
  }\d x + \frac{\delta_\eps}{8\pi} \int_{D}\int_{\R \times (0,b)}
  \frac{\div m_{\eps \delta_\eps}(x) \div m_{\eps \delta_\eps}
  (y)}{\abs{x-y}}\d y \d x. 
\end{align}
This energy is still well defined on the admissible class
$\mathcal A^\#$ for $D = (0, a) \times (0, b)$. We are going to
completely characterize the minimizers of $E_\eps^\#$ under the
scaling assumptions in \eqref{lamdeleps} and \eqref{hepslnln} as
$\eps \to 0$. In particular, we will show that for small enough
$\beta$ the minimizers asymptotically consist of edge domain walls of
width of order $\eps L_\eps$, where
\begin{align}
  \label{Leps}
  L_\eps := {|\ln \eps| \over \ln |\ln \eps|}.
\end{align}
To see this, let us drop the nonlocal term in \eqref{Esheps} for the
moment and consider a magnetization profile $m$ given by
\eqref{mthcossin} with $\theta = \theta(x_2)$ satisfying
$\theta(0) = \theta_0 \in (0, \frac{\pi}{2}]$. Then after the
rescaling of $x_2$ by $\eps L_\eps$ and formally passing to the limit
$\eps \to 0$ we obtain the following local one-dimensional energy
\begin{align}
  E^\infty_{1d}(\theta) := \int_0^\infty \left(
  \frac12|\theta'|^2 + \beta (1 - \cos \theta) \right) \d x.
\end{align}
For $\theta_0$ fixed, this energy is explicitly minimized by
\begin{align}
  \label{thinf}
  \theta_\infty(x) = 4 \arctan \left( e^{2 \sqrt{\beta}(x_0 - x)} \right),
  \qquad x_0 = {1 \over 2 \sqrt{\beta}} \ln \tan \left( {\theta_0 \over
  4} \right), 
\end{align}
and the corresponding minimal energy is given by
\begin{align}
  \label{Einfthinf}
  E^\infty_{1d} (\theta_\infty) = 8 \sqrt{\beta} \sin^2 \left(
  {\theta_0 \over 4} \right). 
\end{align}
Indeed, using the Modica-Mortola trick \cite{modica87} we find that
\begin{align}
  \label{E1dthMM}
  E^\infty_{1d} (\theta) \geq -2 \sqrt{\beta} \int_0^\infty \sin \left( {\theta
  \over 2} \right) \theta' \d x + \frac12  \int_0^\infty \left[
  \theta' + 2 \sqrt{\beta} \sin \left( {\theta \over 2} \right)
  \right]^2 \d x \geq E^\infty_{1d}(\theta_\infty), 
\end{align}
and equality holds if and only if $\theta = \theta_\infty$.

We now define the function
\begin{align}
  \label{F0}
  F_0(n) := 4 \sqrt{\beta} \left( 1 - \sqrt{1 + n \over 2} \right) +
  {\lambda \over 4 \pi} ( 2 n^2 - 1 ) \qquad n \in [0,1],
\end{align}
and observe that $F_0(\cos \theta_0) = E^\infty_{1d}(\theta_\infty)$
when $\lambda = 0$. In the following we will show that, up to an
additive constant, the minimum of $E^\#_\eps$ may be bounded below as
$\eps \to 0$ by a multiple of $F_0(n_\eps)$, where $n_\eps$ is the
trace of the second component of the minimizer on the edge. Moreover,
this lower bound turns out to be sharp in the limit, allowing to
characterize the global energy minimizers of $E^\#_\eps$ in terms of
those of $F_0$. The latter can in principle be computed as roots of a
cubic polynomial, resulting in a cumbersome explicit formula. Taking
advantage of the fact that $F_0(n)$ is a strictly convex function of
$n$, however, one can conclude that $F_0$ admits a unique minimizer
for every $\lambda > 0$ and $\beta > 0$. We have the following result
regarding the minimizers of $F_0$, whose proof is a simple calculus
exercise.

\begin{lemma}
  \label{l:F0}
  Let $F_0(n)$ be defined by \eqref{F0} and let
  $n_0 = n_0(\beta, \lambda)$ be a minimizer of $F_0$ on $[0,1]$. Then
  $n_0$ is unique, and if
  \begin{align}
    \label{betac}
    \beta_c := {\lambda^2 \over \pi^2},
  \end{align}
  we have $n_0 = 1$ and $F_0(n_0) = {\lambda \over 4 \pi}$ for all
  $\beta \geq \beta_c$, while $0 < n_0 < 1$ and
  $F_0(n_0) < {\lambda \over 4 \pi}$ for all $\beta < \beta_c$.
\end{lemma}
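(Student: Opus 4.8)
\emph{Proof proposal.} The plan is to use the strict convexity of $F_0$ that is already announced in the text, which I would first make precise by direct differentiation. Computing from \eqref{F0} one gets $F_0'(n) = -\sqrt{2\beta}\,(1+n)^{-1/2} + \lambda n/\pi$ and then $F_0''(n) = \tfrac12\sqrt{2\beta}\,(1+n)^{-3/2} + \lambda/\pi$, which is strictly positive on $[0,1]$ for $\beta,\lambda>0$. Hence $F_0$ is strictly convex on the compact interval $[0,1]$, so it has a unique minimizer $n_0$, and $F_0'$ is strictly increasing on $[0,1]$; this gives the uniqueness claim and reduces the rest to tracking the sign of $F_0'$ at the two endpoints.

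Next I would locate $n_0$. Since $F_0'(0) = -\sqrt{2\beta} < 0$ for every $\beta>0$, strict monotonicity of $F_0'$ forces $n_0 > 0$ always, so the left endpoint is never selected. At the right endpoint, $F_0'(1) = -\sqrt{\beta} + \lambda/\pi$, so $F_0'(1) \le 0$ precisely when $\beta \ge \lambda^2/\pi^2 = \beta_c$. In that case, since $F_0'$ is increasing we have $F_0' \le F_0'(1) \le 0$ on all of $[0,1]$, so $F_0$ is nonincreasing there and the minimizer is $n_0 = 1$; evaluating \eqref{F0} at $n=1$, the $\sqrt{\beta}$ term vanishes because $\sqrt{(1+1)/2}=1$, leaving $F_0(1) = \tfrac{\lambda}{4\pi}(2-1) = \tfrac{\lambda}{4\pi}$, as claimed.

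Finally, for $\beta < \beta_c$ we have $F_0'(0) < 0 < F_0'(1)$, so by the intermediate value theorem together with the strict monotonicity of $F_0'$ there is a unique zero $n_0 \in (0,1)$ of $F_0'$, which is the (strict) interior minimizer. Since $n_0 \ne 1$ and $n_0$ is the unique minimizer, $F_0(n_0) < F_0(1) = \tfrac{\lambda}{4\pi}$, completing the dichotomy. I do not expect any real obstacle in this lemma — it is genuinely a calculus exercise; the only places that call for a little care are getting the chain-rule constants in $F_0'$ right (so that the threshold comes out as $\lambda^2/\pi^2$ and not a misnormalized value) and noticing the cancellation of the $\sqrt{\beta}$ contribution in $F_0(1)$.
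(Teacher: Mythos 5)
Your computation is correct throughout ($F_0'(n)=-\sqrt{2\beta}\,(1+n)^{-1/2}+\lambda n/\pi$, $F_0''>0$, and $F_0'(1)=-\sqrt{\beta}+\lambda/\pi$ giving the threshold $\beta_c=\lambda^2/\pi^2$), and the endpoint/sign analysis yields exactly the stated dichotomy. This is the same convexity-based "calculus exercise" the paper has in mind (it omits the details), so the proposal matches the intended proof.
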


\noindent We also remark that the bifurcation at $\beta = \beta_c$ can
be seen to be transcritical, and that $n_0(\beta, \lambda)$ is
monotone increasing in $\beta$ and goes to zero as $\beta \to 0$ with
$\lambda > 0$ fixed. The latter is consistent with the fact that the
magnetization wants to align tangentially to the film edge when the
energy at the edge is dominated by the stray field (see also
\cite{hornreich63,hornreich64,wade64, nonaka85, hirono86, 
  desimone00,kohn05arma,lms:non18}).

Our next result gives an asymptotic relation between the energy of the
minimizers of $E^\#_\eps$ and that of the minimizers of $F_0$.

\begin{theorem}
  \label{t:E1deps}
  Let $\lambda > 0$ and $\beta > 0$. Assume $\delta_\eps$ and $h_\eps$
  are given by \eqref{lamdeleps} and \eqref{hepslnln}. Then as
  $\eps \to 0$ we have
  \begin{align}
    \label{minEeps1d}
    {|\ln \eps| \over \ln |\ln \eps|} \left( \min_{m \in \mathcal
    A^\#} E_\eps^\#(m) - {a \lambda \over 2 \pi} \right) \to 2 a \min_{n
    \in [0,1]} F_0(n).
  \end{align}
\end{theorem}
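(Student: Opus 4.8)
\textbf{Proof proposal for Theorem \ref{t:E1deps}.}

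The plan is to establish matching asymptotic upper and lower bounds on the rescaled excess energy. By Theorem \ref{t:1d} the minimizers of $E_\eps^\#$ are one-dimensional and given by \eqref{mthcossin} with $\theta = \theta(x_2) \in [-\tfrac{\pi}{2}, \tfrac{\pi}{2}]$, symmetric about $x_2 = b/2$, so it suffices to work with the reduced one-dimensional functional and to track the boundary trace $n = \cos\theta(0) = \cos\theta(b) \in [0,1]$, which by symmetry governs the edge charge at both edges. After subtracting the leading-order monodomain contribution $\tfrac{a\lambda}{2\pi}$ and rescaling $x_2$ by $\eps L_\eps$, both the local energy density and the nonlocal kernel rescale so that, at leading order, the local terms contribute (to each of the two boundary layers) the energy $E^\infty_{1d}(\theta_\infty)$ computed in \eqref{Einfthinf}--\eqref{E1dthMM} with boundary angle $\theta_0$ such that $\cos\theta_0 = n$, while the subleading term of the magnetostatic self-energy of the edge charge contributes $\tfrac{\lambda}{4\pi}(2n^2 - 1)$ per edge; summing the two edges and matching with \eqref{F0} yields the factor $2a$ and the limit $2a\min_n F_0(n)$.

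First I would prove the lower bound. For the local part I would use the Modica--Mortola inequality exactly as in \eqref{E1dthMM}, applied on each of the two boundary layers of width $\sim \eps L_\eps$, giving $\tfrac12\int(\eps|\theta'|^2 + \tfrac{h_\eps}{\eps}(1-\cos\theta)) \geq 2\sqrt{\beta}\,\eps L_\eps \, \sin^2(\theta_0/4)(1 + o(1))$ once $h_\eps$ and the rescaling are inserted via \eqref{hepslnln} and \eqref{Leps}, and the contribution of the interior where $\theta$ is near $0$ is negligible. For the magnetostatic part I would follow Brown's duality formulation (as announced after the statement of the main results): one bounds the double integral from below by pairing $\div m_{\eps\delta_\eps}$ against a carefully chosen test potential, subtract off the piece corresponding to the constant leading-order line charge at the edge (which reproduces $\tfrac{a\lambda}{2\pi}$ after the $|\ln\eps|$ normalization), and extract the next-order term. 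The key computation here is that the net line-charge density at the edge is $m_2$ evaluated in the cutoff layer, whose average over the transition is tied to $n$; a logarithmically divergent self-interaction of this charge, cut off at scale $\eps\delta_\eps$ from below and at scale $O(1)$ (the sample size, or $b$) from above, produces $\tfrac{\delta_\eps}{2\pi}|\ln(\eps\delta_\eps)| = \tfrac{\lambda}{2\pi}$ at leading order and a correction proportional to $n^2$ at the relevant next order, matching $\tfrac{\lambda}{4\pi}(2n^2-1)$ after collecting constants. Since $F_0$ is strictly convex with unique minimizer $n_0$ (Lemma \ref{l:F0}), optimizing the resulting lower bound over the admissible traces $n \in [0,1]$ gives $\liminf \geq 2a\min_n F_0(n)$.

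For the upper bound I would construct an explicit competitor: take $\theta(x_2)$ equal to the rescaled Modica--Mortola profile $\theta_\infty$ from \eqref{thinf} with $\theta_0$ chosen so that $\cos\theta_0 = n_0$, glued to $\theta \equiv 0$ in the interior at distance $\sim \eps L_\eps$ from each edge (with a negligible-cost interpolation, e.g. exponential tails truncated), extended by the cutoff $\eta_{\eps\delta_\eps}$ in the edge band. Evaluating $E_\eps^\#$ on this competitor, the local part gives $2\eps L_\eps \cdot 2\sqrt{\beta}\sin^2(\theta_0/4)(1+o(1))$ by direct substitution, and the magnetostatic part is estimated by an explicit computation of the potential generated by the corresponding line charge; the leading term is again $\tfrac{a\lambda}{2\pi}$ and the correction matches $2a\cdot\tfrac{\lambda}{4\pi}(2n_0^2-1)$ after the normalization, so that $\limsup \leq 2a F_0(n_0) = 2a\min_n F_0(n)$. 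Combining the two bounds yields \eqref{minEeps1d}.

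\textbf{Main obstacle.} The routine part is the local (Modica--Mortola) analysis; the delicate part is the magnetostatic energy. The difficulty is that the nonlocal term is genuinely two-dimensional and logarithmically divergent, so one cannot simply reduce it to the one-dimensional half-Laplacian of \eqref{halfL1d} without care: one must simultaneously extract the leading $O(\delta_\eps|\ln\eps|) = O(\lambda)$ constant, which is sensitive to the length scales $\eps\delta_\eps$ (the charge regularization) and $b$ (and $a$, through periodicity), \emph{and} the $O(\delta_\eps \ln(L_\eps/\delta_\eps)) = O\!\big(\tfrac{\lambda \ln|\ln\eps|}{|\ln\eps|}\big)$ correction that actually carries the $n$-dependence and survives after multiplication by $L_\eps = |\ln\eps|/\ln|\ln\eps|$. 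Getting the constant $\tfrac{\lambda}{4\pi}$ and the quadratic dependence $2n^2-1$ exactly right — including confirming that the periodicity in $x_1$ with period $a$ and the finite width $b$ do not alter the coefficient — is where the bulk of the technical work lies, and where the duality (test-potential) method with carefully chosen comparison potentials is essential for the lower bound.
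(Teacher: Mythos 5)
Your proposal follows essentially the same route as the paper: reduction to the one-dimensional problem via Theorem \ref{t:1d}, a Modica--Mortola lower bound for the local part combined with a Brown-type duality formulation and explicitly chosen logarithmic test potentials to extract both the constant $\tfrac{a\lambda}{2\pi}$ and the $n^2$-dependent correction from the stray field, and an upper bound built from a truncated optimal profile with boundary trace $n_0$ whose nonlocal energy is computed through the logarithmic kernel. The only blemish is the intermediate display in your lower bound, where the per-edge local contribution should be of order $8\sqrt{\beta}\,L_\eps^{-1}\sin^2(\theta_0/4)$ rather than $2\sqrt{\beta}\,\eps L_\eps\,\sin^2(\theta_0/4)$; this is a bookkeeping slip that does not affect the limiting structure you correctly identify.
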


We note that since $\min_{m \in \mathcal A^\#} E_\eps^\#$ is bounded
in the limit as $\eps \to 0$ and since the energy in \eqref{Esheps}
consists of the sum of three positive terms, we also get that
$m_\eps \to e_2$ in $L^2(D; \R^2)$ for any minimizer $m_\eps$ of
$E_\eps^\#$ (or even for any configuration with finite energy).
However, much more can be said about the minimizers of $E^\#_\eps$ in
the limit $\eps \to 0$, which is the content of our next theorem. Let
$m_\eps = (m_{\eps,1}, m_{\eps,2})$ be a minimizer, which by Theorem
\ref{t:1d} is one-dimensional, and define
\begin{align}
  \label{theps}
  \theta_\eps(x) := - \arcsin m_{\eps,1}(0, \eps L_\eps x) \qquad x
  \in (0, \eps^{-1} L_\eps^{-1} b),
\end{align}
where $L_\eps$ is defined in \eqref{Leps}. Then we have the following
result. 

\begin{theorem}
  \label{t:1dmin}
  Let $\lambda > 0$ and $\beta > 0$. Assume $\delta_\eps$ and $h_\eps$
  are given by \eqref{lamdeleps} and \eqref{hepslnln}, let $m_\eps$ be
  a minimizer of $E^\#$ over $\mathcal A^\#$, and let $\theta_\eps$ be
  defined in \eqref{theps}. Then as $\eps \to 0$ we have
  \begin{align}
    \label{convthinf}
    |\theta_\eps| \to \theta_\infty \ \text{in} \ H^1_{loc}(\overline{\R^+}),
  \end{align}
  where $\theta_\infty$ is given by \eqref{thinf} with
  $\theta_0 = \arccos n_0$ and $n_0$ is as in Lemma \ref{l:F0}.  In
  particular, $m_{2,\eps}(\cdot, 0) \to n_0$. Moreover, convergence in
  \eqref{convthinf} is uniform on
  $[0, \frac12 \eps^{-1} L_\eps^{-1} b ]$.
\end{theorem}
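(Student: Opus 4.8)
The plan is to deduce Theorem~\ref{t:1dmin} from the energy convergence in Theorem~\ref{t:E1deps} together with the one-dimensional reduction from Theorem~\ref{t:1d}, by a standard compactness-plus-lower-semicontinuity argument localized near one edge. Since $m_\eps$ is one-dimensional and symmetric about the midline $x_2 = b/2$, it suffices to analyze the transition layer near $x_2 = 0$; the edge at $x_2 = b$ is handled by symmetry. First I would record that, by Theorem~\ref{t:E1deps}, the rescaled excess energy $\tfrac{|\ln\eps|}{\ln|\ln\eps|}\bigl(E_\eps^\#(m_\eps) - \tfrac{a\lambda}{2\pi}\bigr)$ converges to $2a\,F_0(n_0)$, and that this energy splits (again using one-dimensionality and symmetry) into a contribution from each edge layer plus the bulk/nonlocal remainder. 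The goal is to show the per-edge contribution, after the blowup $x \mapsto x_2/(\eps L_\eps)$ encoded in \eqref{theps}, converges to the minimal value $E^\infty_{1d}(\theta_\infty) = F_0(n_0)$, with $\theta_\infty$ as in \eqref{thinf}.

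The key steps, in order: (i) establish an a priori $H^1_{loc}$ bound on $|\theta_\eps|$ on bounded intervals $[0,R]$, using that the local part of the rescaled energy controls $\int_0^R \bigl(\tfrac12|\theta_\eps'|^2 + \beta(1-\cos\theta_\eps)\bigr)\,dx$ up to lower-order errors coming from the precise form of $h_\eps/\eps$ after rescaling and from the nonlocal term, which is asymptotically negligible at the relevant order on the blowup scale (this is exactly the content already used in deriving \eqref{Eepsm1m2} and the role of the scaling choices \eqref{lamdeleps}, \eqref{hepslnln}); (ii) extract a subsequence with $|\theta_\eps| \weak \theta_*$ weakly in $H^1_{loc}(\overline{\R^+})$ and strongly in $C^0_{loc}$, noting that $\theta_\eps$ cannot wind by Theorem~\ref{t:1d} so $|\theta_\eps|$ is the natural object and $\theta_* \in [0,\tfrac\pi2]$ at $x=0$; (iii) apply lower semicontinuity of $E^\infty_{1d}$ along the blowup, combined with the lower bound from Theorem~\ref{t:E1deps}, to force $E^\infty_{1d}(\theta_*) \le F_0(\cos\theta_*(0)) \le F_0(n_0) = \min F_0$, where the first inequality uses that the nonlocal part contributes at least the ``renormalized'' term $\tfrac{\lambda}{4\pi}(2n^2-1)$ captured in $F_0$ — this is precisely the matching lower bound established for Theorem~\ref{t:E1deps}; (iv) conclude $\cos\theta_*(0) = n_0$ by uniqueness of the minimizer of $F_0$ (Lemma~\ref{l:F0}) and then $\theta_* = \theta_\infty$ by the strict minimality in the Modica–Mortola inequality \eqref{E1dthMM}; (v) upgrade weak to strong $H^1_{loc}$ convergence from convergence of the energies $\int_0^R \tfrac12|\theta_\eps'|^2 \to \int_0^R \tfrac12|\theta_\infty'|^2$ (norm + weak convergence in Hilbert space), and since the limit is independent of the subsequence, the full family converges. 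The trace statement $m_{2,\eps}(\cdot,0) = \cos\theta_\eps(0) \to \cos\theta_\infty(0) = n_0$ follows from $C^0_{loc}$ convergence at $x=0$.

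For the uniform convergence on $[0,\tfrac12\eps^{-1}L_\eps^{-1}b]$, I would argue that away from the edge the ODE \eqref{EL1d} (rescaled) forces $\theta_\eps$ to be exponentially small: on the far half of the strip the effective potential term dominates, so by a maximum-principle / barrier argument for the rescaled Euler–Lagrange equation one gets $|\theta_\eps(x)| \le C e^{-c\sqrt{\beta}\,x}$ on the blowup scale, matching the exponential decay of $\theta_\infty$ in \eqref{thinf}; combined with $C^0_{loc}$ convergence on bounded intervals this yields uniform convergence on the whole half-strip. The main obstacle I expect is step (iii): carefully showing that the nonlocal (stray-field) part of $E_\eps^\#$ contributes, after subtracting the constant $\tfrac{a\lambda}{2\pi}$ and in the blowup limit, exactly the term $\tfrac{\lambda}{4\pi}(2n_\eps^2-1)$ with $n_\eps = m_{2,\eps}(\cdot,0)$, and no less — i.e., that the Brown-type duality lower bound for the magnetostatic energy is asymptotically saturated and cannot ``hide'' energy that would otherwise relax the local constraint. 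This requires the delicate asymptotic expansion of the line-charge interaction between the edge and the interior already alluded to in the proof of Theorem~\ref{t:E1deps}, and coupling it correctly with the Modica–Mortola bound for the local part so that the sum is bounded below by $2aF_0(n_\eps)$ with no cross-term loss.
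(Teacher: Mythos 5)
Your overall strategy for \eqref{convthinf} coincides with the paper's: compactness from an a priori $H^1_{loc}$ bound (which in the paper comes from the stray-field lower bound \eqref{Feps1dSlb} established for Theorem \ref{t:E1deps}, giving $F_{\eps,1d}\geq \tfrac12 F^{MM}_{\eps,1d}-C$ --- not from the nonlocal term being ``negligible'' at the blowup scale, since it contributes exactly the $\tfrac{\lambda}{4\pi}(2n^2-1)$ term, as you yourself note later), then the matching bounds pinning the per-edge limit at $F_0(n_0)$, uniqueness of the minimizer of $F_0$ to identify the trace, and the equality case of Modica--Mortola to identify the profile. Two remarks on execution. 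First, your inequality chain in step (iii) is written backwards: since $F_0(n_0)=\min F_0$, the correct logic is a \emph{lower} bound of the per-edge limit by $F_0(\cos\theta_*(0))$ combined with the \emph{upper} bound $F_0(n_0)$, forcing $\cos\theta_*(0)=n_0$; as written, $F_0(\cos\theta_*(0))\le F_0(n_0)$ cannot be an input. Second, the paper implements the identification via the discrepancy $\theta_\eps'+2\sqrt\beta\sin(\theta_\eps/2)$: the refined lower bound \eqref{Feps1dthlbMM2} shows this tends to zero in $L^2_{loc}$, which simultaneously yields the first-order equation \eqref{barthweak} for the limit and the strong $H^1_{loc}$ convergence; your step (v) via norm convergence of the Dirichlet energies on each interval would instead require a separate no-energy-loss bookkeeping (local and nonlocal) that you do not supply.

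The genuine gap is the uniform convergence on $[0,\tfrac12\eps^{-1}L_\eps^{-1}b]$. Your barrier/maximum-principle argument for the rescaled Euler--Lagrange equation \eqref{EL1d} requires a pointwise, uniform-in-$\eps$ lower bound on the effective potential away from the edge, i.e.\ a pointwise bound on the nonlocal term $\eta\left(-\tfrac{d^2}{dx^2}\right)^{1/2}\!\left(\eta\, m_{2,\eps}\right)$ at the blowup scale. The energy only gives $L^2$ control of $\theta_\eps'$, and the principal-value integral defining the half-Laplacian cannot be bounded pointwise by $\|u\|_\infty$ and $\|u'\|_{L^2}$ alone; the near-diagonal part needs quantitative $C^{1,\alpha}$-type control of $m_{2,\eps}$ uniform in $\eps$ after rescaling, which Theorem \ref{t:1d} does not provide. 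So the claimed estimate $|\theta_\eps(x)|\le Ce^{-c\sqrt\beta x}$ does not follow as sketched. The paper avoids the Euler--Lagrange equation entirely here: from the Modica--Mortola structure and minimality, the total variation of $\cos(\theta_\eps/2)$ beyond a point $R_k\in(\alpha^{-3},2\alpha^{-3})$ with $\theta_\eps(R_k)<\alpha$ is $O(\alpha)$, which gives $\theta_\eps<C\alpha$ uniformly on $[2\alpha^{-3},\tfrac12\eps^{-1}L_\eps^{-1}b]$; combined with locally uniform convergence and the decay of $\theta_\infty$, this yields the uniform statement. You would need either to supply the missing quantitative regularity for the nonlocal term or to replace the barrier argument by an energy/total-variation argument of this kind.
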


We remark that in view of the reflection symmetry of the minimizers
guaranteed by Theorem \ref{t:1d}, the same conclusions hold in the
vicinity of the top edge as well. We also note that by Theorem
\ref{t:1dmin} and Lemma \ref{l:F0}, there is a bifurcation from the
monodomain state to a state containing edge domain walls as the energy
minimizers at $\beta = \beta_c$ in the limit as $\eps \to 0$, with
$\theta_\infty = 0$ for all $\beta \geq \beta_c$ and
$\theta_\infty \not= 0$ for all $\beta < \beta_c$.

We now go to the original problem on the rectangular domain described
by the energy in \eqref{Rescaled}. In our final theorem, we establish
that both the energy of the minimizers and their average trace on the
top and the bottom edges of the rectangle approach the same values as
in the case of the minimizers in the periodic setting as $\eps \to 0$.

\begin{theorem}
  \label{t:rect}
  Let $\lambda > 0$ and $\beta > 0$. Assume $\delta_\eps$ and $h_\eps$
  are given by \eqref{lamdeleps} and \eqref{hepslnln}, and let
  $m_\eps$ be a minimizer of $E_\eps$ from \eqref{Rescaled} over
  $H^1(D; \mathbb S^1)$. Then as $\eps \to 0$ we have
  \begin{align}
    \label{minEeps1d2}
    {|\ln \eps| \over \ln |\ln \eps|} \left( E_\eps(m_\eps) - {a
    \lambda \over 2 \pi} \right) \to 2 a F_0(n_0),
  \end{align}
  where $n_0 \in [0,1]$ is the unique minimizer of $F_0$ in
  \eqref{F0}. Furthermore, $m_\eps(x) \to e_2$ for a.e. $x \in D$, and
  we have
  \begin{align}
    \label{eq:nepsn0}
    {1 \over a} \int_0^a m_{2,\eps}(t, 0) \d t \to n_0 \qquad
    \text{and} \qquad  {1 \over a} \int_0^a m_{2,\eps}(t, b) \d t
    \to n_0.  
  \end{align}
\end{theorem}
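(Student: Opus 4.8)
The plan is to transfer the periodic asymptotics of Theorems \ref{t:E1deps} and \ref{t:1dmin} to the rectangular domain $D=(0,a)\times(0,b)$ by a two-sided energy comparison, showing that the extra vertical edges at $x_1=0$ and $x_1=a$ contribute only a lower-order correction. First I would prove the \emph{upper bound}
\begin{align}
  \limsup_{\eps\to 0}\;{|\ln\eps| \over \ln|\ln\eps|}\Bigl( \min_{H^1(D;\mathbb S^1)} E_\eps - {a\lambda \over 2\pi} \Bigr) \le 2 a F_0(n_0)
\end{align}
by constructing a competitor: take the one-dimensional near-optimal profile from the periodic problem (the Modica–Mortola transition layer based on $\theta_\infty$ with $\theta_0=\arccos n_0$, glued to the monodomain state in the bulk), which already achieves $E^\#_\eps$ close to the target, and cut it off near $x_1=0$ and $x_1=a$ so that it becomes tangential to the vertical edges within a layer of width comparable to $\eps L_\eps$ (or even $\eps\delta_\eps$). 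The exchange and Zeeman costs of this cutoff are $O(\eps L_\eps)\cdot O(\text{length } b)$, hence $o(\ln|\ln\eps|/|\ln\eps|)$ after the normalization, and the change in the nonlocal term is controlled because the modification affects the divergence only on a set of small measure near the vertical edges; here one reuses the Fourier/duality estimates for the stray field already developed for the periodic proof, noting that the leading constant ${a\lambda \over 2\pi}$ comes from the line charge on the \emph{horizontal} edges of total length $2a$, which is unchanged.

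Next I would prove the matching \emph{lower bound}. The key observation is that the rectangular energy dominates a ``localized'' energy on horizontal slabs: for the stray field, one uses the same test-potential (Brown-type duality) construction as in the periodic case but now restricted to $D$, and one checks that truncating the test potential to respect the finite extent in $x_1$ costs only a lower-order term — essentially because the relevant charge layer sits in a neighborhood of width $\eps L_\eps$ of $\{x_2=0\}\cup\{x_2=b\}$, and the $x_1$-localization error is $O((\eps L_\eps)\log(1/\eps))$ in the unnormalized energy, which vanishes after normalization. For the local part one simply drops the vertical-edge contributions (they are nonnegative) and applies the Modica–Mortola lower bound slicewise in $x_1$, integrating the one-dimensional bound $\ge 2 F_0$ (coming from combining \eqref{E1dthMM} with the sharp nonlocal lower bound from the periodic analysis) over $x_1\in(0,a)$; Jensen/convexity of $F_0$ lets one pass from the slicewise traces $m_{2,\eps}(x_1,0)$ to their average, giving $\ge 2a F_0\bigl({1\over a}\int_0^a m_{2,\eps}(t,0)\,dt\bigr)\ge 2aF_0(n_0)$ by Lemma \ref{l:F0}. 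Combining the two bounds yields \eqref{minEeps1d2}.

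Finally, \eqref{eq:nepsn0} follows from the rigidity in the inequalities: since the lower bound is attained asymptotically, every near-minimizer must make $F_0\bigl({1\over a}\int_0^a m_{2,\eps}(t,0)\,dt\bigr)\to\min F_0$, and by strict convexity of $F_0$ and uniqueness of $n_0$ (Lemma \ref{l:F0}) the averages converge to $n_0$; the same argument at $x_2=b$, and the pointwise convergence $m_\eps\to e_2$ a.e.\ in $D$, comes from boundedness of $\int_D h_\eps^{-1}\eps\,|m-e_2|^2$-type quantities exactly as remarked after Theorem \ref{t:E1deps}. I expect the \emph{main obstacle} to be the treatment of the nonlocal (stray-field) term near the two vertical edges: one must show both in the upper-bound construction and in the lower-bound duality argument that the $x_1$-finite geometry perturbs the magnetostatic energy only at order $o(\ln|\ln\eps|/|\ln\eps|)$, which requires care because the kernel $|x-y|^{-1}$ is long-range and the naive estimate of the self-energy of the corner charges is only logarithmically small; the resolution is that the charge density there has size $O(1)$ on a region of width $O(\eps L_\eps)$ and height $O(\eps L_\eps)$, so its self-interaction is $O((\eps L_\eps)^2\log(1/\eps L_\eps))$, comfortably negligible, while its interaction with the bulk horizontal-edge charge is handled by the same potential-theoretic bounds used in the periodic setting.
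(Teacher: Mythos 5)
Your overall strategy is the paper's: matching upper and lower bounds at the scale $\ln|\ln\eps|/|\ln\eps|$, an $x_1$-independent one-dimensional competitor for the upper bound with the vertical edges contributing only lower-order corrections, a Modica--Mortola bound for the local part plus a Brown-type duality with explicit test potentials for the stray field in the lower bound, and then strict convexity/uniqueness of $n_0$ (Lemma \ref{l:F0}) to extract \eqref{eq:nepsn0}, with the a.e.\ convergence coming from the Zeeman term. The implementation differs in three spots, and one of them needs a caution. First, the paper does not modify the competitor near $x_1=0,a$ at all: it keeps the 1D profile and estimates the corner-charge integrals ($I_1$, $I_3$ in Sec.~\ref{sec:rect}) directly, which avoids having to re-estimate exchange/Zeeman costs of a tangential cutoff; your variant is workable but is extra work for the same outcome. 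Second, for the local lower bound the paper averages in $x_1$ first (Lemma \ref{l:MM2d}, in the spirit of Lemma \ref{l:madmiss}) and applies the 1D Modica--Mortola argument to $\overline m_2$, whereas you go slicewise and then invoke Jensen; that is fine for the local term (each piece of $F_0$ is convex), but note you must select a good height $R(x_1)$ slice by slice, and you must reconcile the heights/cutoff at which the traces are taken (the paper's \eqref{use:ineq} does this). Third, and most importantly: your phrase about ``integrating the one-dimensional bound $\ge 2F_0$ over $x_1$'' cannot be taken literally, because the nonlocal energy does not decompose over slices and the periodic Fourier argument (Lemma \ref{l:nonloc1d}) is unavailable on the rectangle; the stray field must be handled, as you also suggest, by duality. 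There the paper does not truncate the test potential in $x_1$ but extends it radially about the two corners into $\{x_1<0\}$ and $\{x_1>\eps^{-1}L_\eps^{-1}a\}$ (see \eqref{v-2d-1}--\eqref{v-2d-2}), precisely so that $v$ stays $x_1$-independent inside the sample and the uncontrolled vertical-edge charge $\partial_1(\widetilde\eta_{\delta_\eps/L_\eps}\,m_1)$ pairs to zero with $v$; a smooth truncation instead produces both an extra Dirichlet cost and a cross term $\int \widetilde\eta\, m_1\,\partial_1 v$, which can be shown to be $o(\ln|\ln\eps|/(\eps|\ln\eps|))$ but only after a genuine estimate that your sketch asserts rather than proves. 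With that point repaired (or with the paper's corner extension adopted), your argument closes in the same way as the paper's.
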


The statement of the above theorem implies that when $D$ is a
rectangle aligned with the direction of the preferred magnetization
the minimal energy behaves asymptotically as twice the horizontal edge
length times the energy of the one-dimensional edge domain wall, while
the average trace of the minimizer at the top and bottom edges agrees
with that in the one-dimensional edge domain wall. At the same time,
the magnetization in the bulk tends to its preferred value $m =
e_2$. This is consistent with the expectation that a one-dimensional
boundary layer should form near the charged edges.

\section{Proof of Theorem \ref{t:1d}}
\label{sec:proof-theor-reft:1d}

First of all, existence of a minimizer $m \in \mathcal A^\#$ follows
from the direct method of calculus of variations, using standard
arguments. To prove that the minimizer is one-dimensional, for any
admissible $m$ we define a competitor
$\overline m = (\overline m_1, \overline m_2)$, where
\begin{align}
  \label{mbar12}
  \overline m_2(x_1, x_2) := {1 \over a} \int_0^a m_2(t, x_2) \d t,
  \qquad \overline m_1(x_1, x_2) := \sqrt{1 - \overline m_2^2(x_1,
  x_2)}. 
\end{align}
We are now going to establish several useful results concerning
$\overline m$.

\begin{lemma}
  \label{l:madmiss}
  Let $m \in \mathcal A^\#$ and let $\overline m$ be defined by
  \eqref{mbar12}. Then $\overline m \in \mathcal A^\#$,
  \begin{align}
    \label{gradmave}
    \int_D |\nabla \overline m|^2 \d x \leq  \int_D |\nabla m|^2 \d x,
  \end{align}
  and equality in the above expression holds if and only if $m$ is
  independent of $x_1$.
\end{lemma}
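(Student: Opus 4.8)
The plan is to prove Lemma~\ref{l:madmiss} by reducing everything to a pointwise convexity inequality for the gradient after decomposing $\nabla m$ into its tangential and normal components. First I would check admissibility: since $m \in \mathcal A^\#$ is $a$-periodic in $x_1$, the average $\overline m_2(x_2) = \tfrac1a\int_0^a m_2(t,x_2)\,dt$ is a function of $x_2$ alone and inherits $H^1_{loc}$ regularity by Jensen/Minkowski (the map $x_2 \mapsto m_2(\cdot,x_2)$ is in $H^1_{loc}([0,b];L^2(0,a))$), hence $\overline m_2 \in H^1_{loc}([0,b])$ with $|\overline m_2| \le 1$; then $\overline m_1 = \sqrt{1-\overline m_2^2}$ and one must confirm $\overline m \in H^1$. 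Here the only delicacy is differentiating the square root where $\overline m_2^2$ approaches $1$: I would note $\overline m_1' = -\overline m_2 \overline m_2'/\sqrt{1-\overline m_2^2}$ and argue that $|\nabla \overline m|^2 = |\overline m_2'|^2/(1-\overline m_2^2) = |\overline m_2'|^2/\overline m_1^2$ is exactly what must be controlled, so admissibility and the gradient bound are really the same computation.

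The heart of the argument is the inequality
\begin{align}
  \label{eq:plan-key}
  \int_0^a |\nabla m(t,x_2)|^2 \, dt \;\ge\; \frac{\left(\tfrac1a\int_0^a \partial_{x_2} m_2(t,x_2)\,dt\right)^2}{1 - \left(\tfrac1a\int_0^a m_2(t,x_2)\,dt\right)^2}
\end{align}
for a.e.\ $x_2$, after which integrating in $x_2$ gives \eqref{gradmave}. To see \eqref{eq:plan-key}, write $m = (-\sin\theta,\cos\theta)$ locally, so $|\nabla m|^2 = |\nabla\theta|^2$ and $\partial_{x_2} m_2 = -\sin\theta\,\partial_{x_2}\theta$; dropping the $\partial_{x_1}$ part and using Cauchy--Schwarz on $\int_0^a (-\sin\theta)\,\partial_{x_2}\theta\, dt$ one bounds the numerator by $\big(\int_0^a \sin^2\theta\, dt\big)\big(\int_0^a |\partial_{x_2}\theta|^2 dt\big)/a^2$. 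It then remains to check $\tfrac1a\int_0^a \sin^2\theta\,dt \le 1 - \big(\tfrac1a\int_0^a \cos\theta\, dt\big)^2$, i.e.\ $\tfrac1a\int \sin^2\theta + \big(\tfrac1a\int\cos\theta\big)^2 \le 1$; since $\sin^2\theta = 1 - \cos^2\theta$ this is $\big(\tfrac1a\int\cos\theta\big)^2 \le \tfrac1a\int\cos^2\theta$, which is just Jensen's inequality. Putting these together yields \eqref{eq:plan-key}, and since the right-hand side equals $|\overline m_2'(x_2)|^2/\overline m_1^2(x_2) = |\nabla\overline m(x_2)|^2$ (which also confirms $\overline m \in \mathcal A^\#$), we are done with the inequality.

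For the equality case: equality in \eqref{gradmave} forces equality a.e.\ in \eqref{eq:plan-key}, hence equality in both the Cauchy--Schwarz step and in Jensen. Equality in Cauchy--Schwarz means $\partial_{x_2}\theta(t,x_2)$ is proportional (in $t$) to $\sin\theta(t,x_2)$; equality in Jensen means $\cos\theta(t,x_2)$ is independent of $t$; and we have also discarded $\int_0^a |\partial_{x_1} m|^2\,dt$, which must therefore vanish. The vanishing of $\partial_{x_1} m$ immediately gives that $m$ is independent of $x_1$, and conversely if $m = m(x_2)$ then $\overline m = m$ and equality is trivial. I would phrase the forward direction so that the $\partial_{x_1} m \equiv 0$ consequence is extracted first, which makes the other two equality conditions automatic.

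The main obstacle I anticipate is purely technical rather than conceptual: justifying the use of the angle variable $\theta$ globally. Since $m: \R\times[0,b] \to \mathbb S^1$ is only $H^1_{loc}$, a single-valued lift $\theta$ need not exist if $m$ has nonzero winding, so I would either work with $\theta$ locally and patch (the integrand $|\nabla m|^2$, $\partial_{x_2}m_2 = -m_1\partial_{x_2}\theta$, etc.\ are all well-defined globally as $|\nabla m|^2$ and $-m_1 \partial_{x_2} m_2 \cdot(\text{sign})$ anyway), or, cleaner, avoid $\theta$ entirely: use $m_1^2 + m_2^2 = 1$ to get $m_1 \partial_j m_1 = -m_2 \partial_j m_2$ pointwise, whence $|\nabla m|^2 = |\nabla m_1|^2 + |\nabla m_2|^2 \ge |\partial_{x_2} m_2|^2(1 + m_2^2/m_1^2) = |\partial_{x_2}m_2|^2/m_1^2 = |\partial_{x_2} m_2|^2/(1-m_2^2)$ wherever $m_1 \ne 0$, and then apply Cauchy--Schwarz and Jensen to $\tfrac1a\int_0^a \partial_{x_2} m_2\, dt$ exactly as above. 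The measure-zero set where $m_1 = 0$ is handled by a standard truncation/approximation argument. Care is also needed near $x_2 = 0, b$ where $\overline m_2$ may hit $\pm 1$, but the bound $|\nabla\overline m|^2 \le |\nabla m|^2$ combined with its finiteness resolves this.
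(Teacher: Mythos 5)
Your argument is correct, and while it rests on the same underlying fact as the paper's proof --- convexity of $(u,v)\mapsto v^2/(1-u^2)$ under averaging in $x_1$ --- your implementation is genuinely different and more elementary. The paper regularizes to $F_\eps(u,v)=v^2/(1+\eps-u^2)$ as in \eqref{Fuveps}, applies the supporting-hyperplane (first-order convexity) inequality at $(\overline m_2,\partial_2\overline m_2)$, kills the two linear terms by Fubini and the definition of $\overline m_2$, removes $\eps$ by monotone convergence, and then spends the rest of the proof on admissibility, approximating $m$ by smooth $\mathbb S^1$-valued maps and using lower semicontinuity to get $\overline m_1=\sqrt{1-\overline m_2^{\,2}}\in H^1_{loc}$. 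You instead prove the slicewise-in-$x_2$ inequality directly by weighted Cauchy--Schwarz plus the scalar Jensen inequality for $m_2$; this is a self-contained proof of the same Jensen-type statement and gives a cleaner equality case, since you discard all of $|\partial_1 m|^2$, so equality forces $\partial_1 m=0$ a.e.\ outright, whereas the paper's version yields $\partial_1 m_2=0$ and uses \eqref{gradmm2} to finish. Three points need tightening in your write-up: (i) as you anticipated, use the angle-free variant (a global lift $\theta$ need not exist); it is exactly the paper's identity \eqref{gradmm2}; (ii) the exceptional set $\{m_1=0\}=\{|m_2|=1\}$ need not have measure zero --- what saves the argument is that $\nabla m_2=0$ a.e.\ on it (Lieb--Loss, Theorem 6.19, as invoked in the paper), so the weighted Cauchy--Schwarz should be run on $\{|m_2|<1\}$ with the convention $0/0=0$; (iii) admissibility of $\overline m_1$ is not literally the same computation, since $u\mapsto\sqrt{1-u^2}$ fails to be Lipschitz near $|u|=1$, so you still need a truncation such as $\sqrt{\max(\eps,1-\overline m_2^{\,2})}$ (or the paper's smooth approximation) together with your $L^1$ bound on $(\overline m_2')^2/(1-\overline m_2^{\,2})$ to conclude $\overline m_1\in H^1_{loc}$. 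Also, in the converse equality direction $\overline m$ need not coincide with $m$ (only $\overline m_1=|m_1|$), but the Dirichlet energies agree, so the conclusion is unaffected.
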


\begin{proof}
  Since $m(x) = (m_1(x), m_2(x)) \in \mathbb S^1$ for a.e. $x \in D$,
  we have
  \begin{align}
    \label{m1m2unit}
    m_1^2(x) + m_2^2(x) = 1 \qquad \text{for a.e.} \ x \in D.
  \end{align}
  Therefore, applying weak chain rule \cite[Theorem 6.16]{lieb-loss}
  to the above expression yields
  \begin{align}
    \label{m1m2grad}
    m_1 \nabla m_1 = - m_2 \nabla m_2  \qquad \text{a.e. in} \ D. 
  \end{align}
  Combining \eqref{m1m2unit} and \eqref{m1m2grad}, and using the fact
  that $\nabla m_1(x) = 0$ for a.e. $x \in A \subseteq D$ whenever
  $m_1 = 0$ on $A$ and $|A| > 0$ \cite[Theorem 6.19]{lieb-loss}, we
  have
  \begin{align}
    \label{gradmm2}
    |\nabla m(x)| =
    \begin{cases}
      {|\nabla m_2(x)| \over \sqrt{1 - m_2^2}}, & |m_2(x)| < 1, \\
      0, & |m_2(x)| = 1,
    \end{cases}
    \qquad \text{for a.e.} \ x \in D.
  \end{align}
  Note that this implies $\nabla m_2 = 0$ on the set $A$ as well.
  Then by monotone convergence theorem we can write
  \begin{align}
    \label{gradm2eps}
    \int_D |\nabla m|^2 \d x = \lim_{\eps \to 0} \int_D {|\nabla m_2|^2
    \over 1 + \eps - m_2^2} \d x.
  \end{align}
  Now for $\eps > 0$ consider the function
  \begin{align}
    \label{Fuveps}
    F_\eps(u, v) := {v^2 \over 1 + \eps - u^2} \qquad (u, v) \in
    [-1,1] \times \R.
  \end{align}
  By direct computation this function is convex for all $\eps >
  0$. Therefore
  \begin{align}
    \int_D {|\nabla m_2|^2 \over 1 + \eps - m_2^2} \d x
    & = \int_D {|\partial_1
      m_2|^2 \over 1 + \eps - m_2^2} \d x + \int_D
      F_\eps(m_2, \partial_2 m_2) \d x \notag \\
    & \geq \int_D
      F_\eps(\overline m_2, \partial_2 \overline m_2) \d x
      + \int_D \partial_u F_\eps(\overline m_2, \partial_2 \overline
      m_2) (m_2 - \overline m_2) \d x \notag \\
    & \qquad \qquad + \int_D \partial_v F_\eps(\overline m_2,
      \partial_2 \overline m_2) (\partial_2 m_2 - \partial_2 \overline
      m_2) \d x.  \label{Fepsuv} 
  \end{align}
  At the same time, by Fubini's theorem and the definition of
  $\overline m_2$ we have
  \begin{align}
    \int_D \partial_u F_\eps(\overline m_2, \partial_2 \overline
    m_2) (m_2 - \overline m_2) \d x
    = \int_0^b \left( \partial_u F_\eps(\overline
    m_2, \partial_2 \overline m_2) \int_0^a (m_2- \overline m_2) \d
    x_1 \right) \d x_2 = 0,
  \end{align}
  and
  \begin{align}
    \int_D \partial_v F_\eps(\overline m_2, \partial_2 \overline
    m_2) (\partial_2 m_2 - \partial_2 \overline m_2) \d x
    = \int_0^b
    \left( \partial_v F_\eps(\overline m_2, \partial_2 \overline m_2)
    \int_0^a (\partial_2  m_2- \partial_2 \overline m_2) \d x_1 \right)
    \d x_2 \notag \\
    = \int_0^b
    \left( \partial_v F_\eps(\overline m_2, \partial_2 \overline m_2)
    \, \partial_2 \int_0^a (m_2- \overline m_2) \d x_1 \right)
    \d x_2= 0.
  \end{align}
  This yields
  \begin{align}
    \label{gradm2m2bareps}
    \int_D {|\nabla m_2|^2 \over 1 + \eps - m_2^2} \d x \geq \int_D
    {|\nabla \overline m_2|^2 \over 1 + \eps - \overline m_2^2} \d x.
  \end{align}

  We now argue by approximation and take
  $m^\delta \in C^\infty(\R \times [0,b]; \mathbb S^1)$ such that
  $m^\delta \to m$ in $H^1_{loc}(\R \times [0,b]; \R^2)$ as
  $\delta \to 0$ \cite{bethuel88,bourgain00}. Then we have
  $\overline m_2^\delta \in C^\infty(\R \times [0,b])$ as
  well. Turning to $\overline m_1^\delta$ defined in \eqref{mbar12},
  observe that $\overline m_1^\delta \in C(\R \times
  [0,b])$. Furthermore, since $\overline m_1$ is a composition of a
  smooth non-negative function with the square root, we also have that
  $\overline m_1^\delta \in W^{1,\infty}(\R \times (0,b))$. Thus,
  $\overline m^\delta \in H^1_{loc}(\R \times [0,b])$, and by the
  arguments at the beginning of the proof we have
  \begin{align}
    \label{gradmoverl2eps}
    \int_D |\nabla \overline m^\delta|^2 \d x = \lim_{\eps \to 0} \int_D
    {|\nabla \overline m_2^\delta|^2
    \over 1 + \eps - |\overline m_2^\delta|^2} \d x.
  \end{align}
  Combining this equality with \eqref{gradm2eps} and
  \eqref{gradm2m2bareps}, we arrive at \eqref{gradmave} for $m^\delta$
  and $\overline m^\delta$. Passing to the limit $\delta \to 0$, by
  lower semicontinuity of $\int_D |\nabla \overline m^\delta|^2 \d x$
  we obtain that $\overline m_1 \in H^1_{loc}(\R \times [0,b])$ and
  \eqref{gradmave} holds. Furthermore, by construction
  $|\overline m| = 1$, and $\overline m$ is independent of $x_1$,
  hence, $\overline m \in \mathcal A^\#$. Finally, if equality holds
  in \eqref{gradmave} then we have
  $\int_D |\partial_1 m_2|^2 \d x = 0$, yielding the rest of the
  claim.
\end{proof}

With a slight abuse of notation, from now we will frequently refer to
$\overline m$ as a function of one variable, i.e.,
$\overline m = \overline m(x_2)$, and extend it by zero for all
$x_2 \not \in (0,b)$. Similarly, we treat $\eta_\delta$ in
\eqref{mdelta} as a function of one variable, i.e.,
$\eta_\delta = \eta_\delta(x_2)$, and extended it by zero for all
$x_2 \not \in (0,b)$ as well.

\begin{lemma}
  \label{l:nonloc1d}
  Let $m \in \mathcal A^\#$. Then
  \begin{align}
    \label{eq:nonloc1d}
    \int_D \int_{\R \times (0,b)} {\nabla \cdot m_\delta (x) \nabla
    \cdot m_\delta (y) \over |x - y|} \d x \d y \geq a
    \int_\R \int_\R {(\overline m_2(x) \eta_\delta(x) - \overline
    m_2(y) \eta_\delta(y))^2 \over (x - y)^2} \d x \d y,
  \end{align}
  where $m_\delta$ is defined in \eqref{mdelta0} and $\overline m$ is
  given by \eqref{mbar12}. Moreover, equality holds if and only if
  $m_2(x) = \overline m_2(x)$ for a.e. $x \in D$.
\end{lemma}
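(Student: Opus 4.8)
The plan is to diagonalize the magnetostatic quadratic form in the periodic variable $x_1$ and keep only the zeroth Fourier mode. Set $\rho := \nabla\cdot m_\delta$; by \eqref{mdelta0} it is $a$-periodic in $x_1$, is supported in $\{0<x_2<b\}$ because $\eta_\delta$ vanishes at $x_2=0,b$, and satisfies $\int_0^a\!\int_0^b\rho\,dx=0$ (periodicity in $x_1$ kills $\int_0^a\partial_1 m_{\delta,1}\,dx_1$, and $m_{\delta,2}$ vanishes at $x_2=0,b$). Expanding $\rho(x_1,x_2)=\sum_{n\in\mathbb Z}\rho_n(x_2)e^{2\pi i n x_1/a}$ and integrating $1/|x-y|$ against each mode in the first slot, one finds
\[
\int_D\!\int_{\R\times(0,b)}\!\frac{\rho(x)\rho(y)}{|x-y|}\,dx\,dy = a\sum_{n\ne 0}\int_\R\!\int_\R\rho_n(s)\,\overline{\rho_n(t)}\,2K_0\!\Bigl(\tfrac{2\pi|n|}{a}|s-t|\Bigr)ds\,dt + a\int_\R\!\int_\R\rho_0(s)\rho_0(t)\bigl(-2\ln|s-t|\bigr)ds\,dt,
\]
with $K_0$ the modified Bessel function. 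The $n=0$ term needs a brief justification: $\int_\R ds_1/|x-y|$ diverges logarithmically, but the divergent constant multiplies $\bigl(\int_\R\rho_0\bigr)^2=0$, and the residual kernel is $-2\ln|\cdot|$. Every $n\ne 0$ term is nonnegative because the kernel $2K_0(\tfrac{2\pi|n|}{a}|\cdot|)$ has positive Fourier transform $2\pi\bigl((2\pi n/a)^2+\tau^2\bigr)^{-1/2}$.

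Next I would identify the zeroth-mode term with the right-hand side of \eqref{eq:nonloc1d}. By \eqref{mbar12}, and again because $\partial_1 m_{\delta,1}$ averages to zero over a period, $\rho_0=\frac{d}{ds}\bigl(\eta_\delta(s)\overline m_2(s)\bigr)=w'$ with $w:=\eta_\delta\overline m_2$ extended by zero, so $\int_\R w'=0$. Then, via the Fourier identity $\int_\R\!\int_\R f(s)f(t)(-\ln|s-t|)\,ds\,dt=\tfrac12\int_\R|k|^{-1}|\hat f(k)|^2\,dk$ valid for $f$ with $\int f=0$ (applied to $f=w'$, $\hat f=ik\hat w$), one gets
\[
-2\int_\R\!\int_\R w'(s)w'(t)\ln|s-t|\,ds\,dt=\int_\R|k|\,|\hat w(k)|^2\,dk=\int_\R\!\int_\R\frac{\bigl(w(s)-w(t)\bigr)^2}{(s-t)^2}\,ds\,dt,
\]
the last equality being the standard Fourier representation of the $\dot H^{1/2}$ Gagliardo seminorm. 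Multiplying by $a$ and dropping the nonnegative $n\ne 0$ terms yields \eqref{eq:nonloc1d}.

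For the equality statement, equality forces $\rho_n\equiv 0$ for all $n\ne 0$, i.e.\ $\nabla\cdot m_\delta$ is independent of $x_1$. Conversely, if $m_2=\overline m_2$ does not depend on $x_1$, then $|m|=1$ a.e.\ and the weak chain rule (as in Lemma \ref{l:madmiss}: $m_1\nabla m_1=-m_2\nabla m_2$, with $\nabla m_1=0$ a.e.\ on $\{m_1=0\}$) give $\partial_1 m_1=0$ a.e., so $\nabla\cdot m_\delta=\partial_2(\eta_\delta\overline m_2)=\rho_0$ is $x_1$-independent and equality holds. The main obstacle is the remaining implication: that $\nabla\cdot m_\delta$ being $x_1$-independent already forces $m_2=\overline m_2$. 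Equivalently, the divergence-free field $\eta_\delta\,(m_1,\,m_2-\overline m_2)$ on $D$ must have vanishing second component, and the linear divergence-free condition on its own does not pin this down — one has to feed back the pointwise constraint $|m|=1$. I would test the identity $\partial_1(\eta_\delta m_1)+\partial_2\bigl(\eta_\delta(m_2-\overline m_2)\bigr)=0$ against $m_2-\overline m_2$ and against $m_1$, integrate by parts in $x_1$ (periodic, no boundary terms) and in $x_2$ (where $\eta_\delta$ vanishes at $0,b$), and use $m_1\partial_1 m_1=-m_2\partial_1 m_2$ together with the strict monotonicity $\eta_\delta'>0$ in the cutoff layers to conclude that the $x_1$-fluctuation of $m_2$ vanishes; closing this last step cleanly is where the real work lies.
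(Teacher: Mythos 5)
Your proof of the inequality itself is correct and is essentially the paper's argument: the paper also diagonalizes in the periodic variable, writing the nonlocal term as $\tfrac1a\sum_{n}\int_\R |q(n,\xi)\cdot c(n,\xi)|^2/|q(n,\xi)|\,\d\xi$ and discarding the $n\neq 0$ modes, then identifying the $n=0$ contribution with the $\mathring H^{1/2}(\R)$ seminorm of $\eta_\delta\overline m_2$; your variant integrates out $x_1$ first, so the dropped $n\neq0$ terms appear as quadratic forms with Bessel kernels of positive Fourier transform, which is the same positivity in different clothing. Your treatment of the $n=0$ mode (using $\int_\R\rho_0=0$ to discard the divergent constant, and the identity consistent with \eqref{StrEq}) and your proof of the ``if'' direction of the equality claim (weak chain rule giving $\partial_1 m_1=0$ a.e.\ when $m_2=\overline m_2$) are both fine.

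The genuine gap is the ``only if'' direction, which you explicitly leave open. From equality you obtain only that $\nabla\cdot m_\delta$ is independent of $x_1$, and the step you sketch --- testing the divergence-free relation for $\eta_\delta\,(m_1,\,m_2-\overline m_2)$ against $m_1$ and $m_2-\overline m_2$ and feeding in $|m|=1$ and $\eta_\delta'>0$ --- is not carried out, and it is not a routine computation: the divergence constraint alone is linear and certainly does not pin down $m_2$, so everything hinges on a rigidity argument for unit-length periodic $H^1$ fields whose regularized divergence has no $x_1$-oscillation, which you do not supply. By contrast, the paper does not pass through this intermediate reduction at all: it reads the equality case directly off \eqref{mdelinv} and \eqref{nloc1dfour}, asserting strictness of \eqref{eq:nonloc1d} unless $m_2=\overline m_2$ a.e. It is also worth knowing that in the only place the lemma is used (the proof of Theorem \ref{t:1d}), one-dimensionality of minimizers is obtained from the equality case of Lemma \ref{l:madmiss} for the exchange term, after which $m_2=\overline m_2$ is automatic; so the direction you are missing is not what carries the main theorem, but as a proof of the lemma as stated your argument is incomplete at exactly this point.
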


\begin{proof}
  The proof proceeds via passing to Fourier space. For
  $n \in \mathbb Z$ and $\xi \in \R$, we define Fourier coefficients
  $c(n, \xi) \in \R^2$ as
  \begin{align}
    c(n, \xi) := \int_D e^{- i q(n,\xi) \cdot x} m_\delta(x) \d x,
  \end{align}
  where $q(n, \xi) := (2 \pi a^{-1} n, \xi) \in \R^2$.  Then the inversion
  formula reads (see, e.g., \cite[Section 4]{milisic13}):
  \begin{align}
    \label{mdelinv}
    m_\delta(x) = {1 \over 2 \pi a} \sum_{n \in \mathbb Z} \int_\R e^{i
    q(n,\xi) \cdot x} c(n, \xi) \d \xi. 
  \end{align}
  In terms of $c(n, \xi)$ the left-hand side of \eqref{eq:nonloc1d} may
  be written as
  \begin{align}
    \label{nloc1dfour}
    \int_D \int_{\R \times (0,b)} {\nabla \cdot m_\delta (x) \nabla
    \cdot m_\delta (y) \over |x - y|} \d x \d y = {1 \over a} \sum_{n
    \in \mathbb Z} \int_\R {|q(n, \xi) \cdot c(n, \xi)|^2 \over
    |q(n, \xi)|} \d \xi.
  \end{align}
  Keeping only the $n = 0$ contribution in the right-hand side, we,
  therefore, have
  \begin{align}
    \int_D \int_{\R \times (0,b)} {\nabla \cdot m_\delta (x) \nabla
    \cdot m_\delta (y) \over |x - y|} \d x \d y \geq {1 \over a}
    \int_\R |\xi| \, |c_2(0, \xi)|^2 \d \xi. 
  \end{align}
  Passing back to real space, with the help of the integral formula
  for the $\mathring H^{1/2}(\R)$ norm \cite{dinezza12} we obtain
  \eqref{eq:nonloc1d}. Finally, by \eqref{mdelinv} and
  \eqref{nloc1dfour} the inequality in \eqref{eq:nonloc1d} is strict,
  unless $m_2 = \overline m_2$ almost everywhere.
\end{proof}

Having obtained the above auxiliary results for $\overline m$, we now
proceed to the proof of our first theorem.

\begin{proof}[Proof of Theorem \ref{t:1d}]
  Let $m \in \mathcal A^\#$ be a minimizer of $E^\#$. By Lemmas
  \ref{l:madmiss} and \ref{l:nonloc1d}, we have
  $E^\#(m) \geq E^\#(\overline m)$, where $\overline m$ is defined in
  \eqref{mbar12}. In particular, this inequality is in fact an
  equality, and by Lemma \ref{l:madmiss} we have $m =
  m(x_2)$. Moreover, by Lemma \ref{l:nonloc1d} we have
  $E^\#(m) = a E^\#_{1d}(m)$, where
  \begin{align}
    \label{Esh1d}
    E^\#_{1d}(m) := \frac12 \int_0^b \left( |m'|^2 + h |m -
    e_2|^2 \right) \d x + {\delta \over 8 \pi} \int_\R \int_\R {
    (m_2(x) \eta_\delta(x) - m_2(y) \eta_\delta(y))^2  \over (x -
    y)^2} \d x \d y,
  \end{align}
  with the usual abuse of notation that $m$ and $\eta_\delta$ are
  treated as functions of one variable in the right-hand side of
  \eqref{Esh1d}, and $m_2 \eta_\delta$ has been extended by zero
  outside $(0,b)$.

  We now claim that $m_2(x_2) \geq 0$ for all $x_2 \in (0,b)$. Indeed,
  taking $\widetilde m := (m_1, |m_2|) \in \mathcal A^\#$ as a
  competitor, we have $|\nabla \widetilde m| = |\nabla m|$ and
  \begin{align}
    \label{H12tilm}
    \int_\R \int_\R {
    (m_2(x) \eta_\delta(x) - m_2(y) \eta_\delta(y))^2  \over (x -
    y)^2} \d x \d y \geq   \int_\R \int_\R {
    (\widetilde m_2(x) \eta_\delta(x) - \widetilde m_2(y)
    \eta_\delta(y))^2 \over (x - y)^2} \d x \d y,
  \end{align}
  where the last inequality follows from the fact that the integrand
  in the right-hand side of \eqref{H12tilm} is pointwise no greater
  than that in its left-hand side. On the other hand, since
  $|m - e_2|^2 = 2 - 2 m_2$, we have
  $E^\#_{1d}(m) > E^\#_{1d}(\widetilde m)$, unless
  $\widetilde m(x_2) = m(x_2)$ for all $x_2 \in (0,b)$.

  Now that we established that $m_2 \geq 0$, we may define
  $\theta(x_2) := -\arcsin m_1(x_2) \in [-\frac{\pi}{2},
  \frac{\pi}{2}]$, so that $m$ satisfies \eqref{mthcossin}. Then we
  can rewrite the energy of the minimizer as
  \begin{align}
    \label{Eshth}
    E^\#_{1d}(m) = \int_0^b \left( \frac12 |\theta'|^2 +  h (1
    - \cos \theta) \right) \d x + {\delta \over 8 \pi} \int_\R \int_\R {
    (\cos \theta(x) \eta_\delta(x) - \cos \theta(y) \eta_\delta(y))^2  \over (x -
    y)^2} \d x \d y,
  \end{align}
  where in the exchange energy we approximated $\theta$ by functions
  bounded away from $\pm \frac{\pi}{2}$ and passed to the limit with
  the help of monotone convergence theorem. In particular, from
  boundedness of the right-hand side of \eqref{Eshth} it follows that
  $\theta \in H^1(0,b)$. Therefore, $\theta$ satisfies the weak form
  of \eqref{EL1d} (for further details, see
  \cite{capella07,cm:non13}). At the same time, since
  $\eta_\delta \cos \theta \in H^1(\R)$ by weak product and chain
  rules \cite[Corollaries 8.10 and 8.11]{brezis}, and the operator
  $(-d^2 / dx^2)^{1/2}$ is a bounded linear operator from $H^1(\R)$ to
  $L^2(\R)$, we also have $\theta'' \in L^2(0,b)$, and, hence,
  $\theta \in C^{1,1/2}([0,b])$. In particular, we can use the formula
  in \eqref{halfL1d} to compute the non-local term in \eqref{EL1d}.

  We now apply a bootstrap argument to establish further interior
  regularity of $\theta$. Note that this result is not immediate,
  since the function $\eta_\delta$ extended by zero to the whole real
  line is only Lipschitz continuous. Nevertheless, for every $x \in I$
  where $I \Subset (0,b)$ is open we can introduce a partition of
  unity whereby we have
  \begin{align}
    \label{partuni}
    \left(-  {d^2 \over dx^2} \right)^{1/2} \eta_\delta(x) \cos \theta(x)
    = {1
    \over \pi} \dashint_\R {\eta_\delta(x) \cos \theta(x) - \eta_\delta(y)
    \cos \theta(y) \chi(y) \over (x - y)^2} \d y \notag \\
    \qquad -  {1
    \over \pi} \int_\R { \eta_\delta(y)
    \cos \theta(y) (1 - \chi(y)) \over (x - y)^2} \d y,
  \end{align}
  where $\chi \in C^\infty_c(\R)$ is such that $\chi \equiv 1$ in $I$ and
  $\text{supp}(\chi) \subset (0,b)$. Taking the distributional
  derivative of the right-hand side in \eqref{partuni} and using the
  fact that now $\eta_\delta \chi \cos \theta \in H^2(\R)$, we get
  that the left-hand side of \eqref{partuni} is in $H^1(I)$. Applying
  the bootstrap argument locally, we thus obtain that $\theta \in
  H^3_{loc}(0,b)$ and, hence, $\theta \in C^\infty(0,b)$, and \eqref{EL1d}
  holds classically for all $x \in (0,b)$. Once the latter is
  established, we obtain the boundary condition $\theta'(0) =
  \theta'(b) = 0$ via integration by parts.

  To establish higher regularity of $\theta$ near the boundary, we
  estimate the nonlocal term, using the fact that
  $\eta_\delta \in C^\infty([0,b])$ and $\theta' \in
  C^{1/2}([0,b])$. For $x \in (0,b)$ let
  $u(x) := \eta_\delta(x) \cos \theta(x)$. Notice that
  \begin{align}
    \label{uCxbx}
    |u(x)| \leq C x (b - x),
  \end{align}
  for some $C > 0$. Focusing on the first term in the right-hand side
  of \eqref{halfL1d}, with the help of Taylor formula we can write for
  $x \in (0, \frac12 b)$:
  \begin{align}
    \left| \dashint_0^b {u(x) - u(y) \over (x - y)^2} \d y \right|
    \leq \left| \dashint_0^{2x} {u(x) - u(y) \over (x - y)^2} \d y
    \right| + \left| \int_{2x}^b {u(x) - u(y) \over (x - y)^2} \d y
    \right| \notag \\
    \leq \int_0^{2x} {|u'(\xi_1) - u'(x)| \over |x - y|} \d y +
    \int_{2x}^b {|u'(\xi_2)| \over |x - y|} \d y \notag \\
    \leq C x^{1/2} + C \ln (2 b/x),
  \end{align}
  for some $C > 0$, where $|\xi_1 - x| < |x - y|$ and $\xi_2 \in (x,
  y)$. Combining this with \eqref{uCxbx} yields
  \begin{align}
    \label{etadhaflap}
    \left| \eta_\delta(x) \left( -{ d^2 \over dx^2} \right)^{1/2}
    \eta_\delta(x) \cos \theta(x) \right| \leq C x (1 + x^{1/2} + \ln
    x^{-1} ),
  \end{align}
  for some $C > 0$ and all $x$ sufficiently small. Thus, the
  expression in the left-hand side of \eqref{etadhaflap} is continuous
  and vanishes at $x = 0$. By the same argument, the same holds true
  near $x = b$. Using this fact, from \eqref{EL1d} we conclude that
  $\theta \in C^2([0,b])$.

  We now prove that there are at most three minimizers of $E^\#$ in
  $\mathcal A^\#$. Let $m$ be a minimizer associated with
  $\theta \in H^1(0,b)$. Then by \eqref{Eshth} the function
  $\widetilde m \in \mathcal A^\#$ associated with
  $\tilde \theta = |\theta|$ is also a minimizer. In particular,
  $\tilde \theta \in C^2([0,b])$ and solves \eqref{EL1d}
  classically. Now, suppose that there exists a point $x_0 \in [0,b]$
  such that $\tilde \theta(x_0) = 0$. By regularity of $\tilde \theta$
  in the interior or homogeneous Neumann boundary conditions we then
  also have $\tilde \theta'(x_0) = 0$. We now apply a maximum
  principle type argument based on the uniqueness of the solution of
  the initial value problem for \eqref{EL1d} considered as an ordinary
  differential equation with the nonlocal term treated as a given
  function of $x \in [0,b]$:
  \begin{align}
    \theta''(x) = c(x) \sin \theta(x), \quad
    c(x) := h - {\delta \over 2} \eta_\delta(x) \left( -{ d^2 \over
    dx^2} \right)^{1/2} \eta_\delta(x) \cos \theta(x).
  \end{align}
  Indeed, by the argument in the preceding paragraph the function $c(x)$ is
  continuous on $[0,b]$. Therefore, if $\tilde \theta(x)$ vanishes for
  some $x_0 \in [0,b]$ we have $\tilde \theta \equiv 0$ on
  $[0,b]$. Alternatively, $\tilde \theta > 0$ for all $x \in [0,b]$,
  which means that $\theta$ does not change sign.

  To conclude the proof of the multiplicity of the minimizers, observe
  that in view of the above we need to show that there is at most one
  minimizer $\theta \in (0, {\pi \over 2}]$ of the right-hand side of
  \eqref{Eshth}. In this case we can rewrite the energy in terms of
  $m_2 < 1$:
  \begin{align}
    \label{E1dm2}
    E^\#_{1d}(m) = \frac12 \int_0^b \left( {|m_2'|^2 \over 1 - m_2^2}
    + 2 h ( 1 - m_2) \right) \d x + {\delta \over 8 \pi} \int_\R
    \int_\R {(m_2(x) \eta_\delta(x) - m_2(y) \eta_\delta(y))^2 \over
    (x - y)^2} \d x \d y.
  \end{align}
  By inspection this energy is convex. Furthermore, the last term in
  \eqref{E1dm2} is strictly convex in view of the fact that
  $m_2 \eta_\delta$ vanishes identically outside $(0,b)$. Thus, there
  is at most one minimizer with $m_1 > 0$. If such a minimizer exists,
  then by reflection symmetry the function
  $\widetilde m := (-m_1, m_2)$ is also a minimizer, which is the only
  minimizer with $\widetilde m_1 < 0$. Finally, the symmetry of the
  minimizer with respect to reflections $x_2 \to b - x_2$ follows from
  the invariance of the energy in \eqref{E1dm2} with respect to such
  reflections.
\end{proof}

\section{Proof of Theorems \ref{t:E1deps} and \ref{t:1dmin}}
\label{sec:proof-theor-23}

In view of the result in Theorem \ref{t:1d}, it suffices to consider
the minimizers of a suitably rescaled version of the one-dimensional
energy in \eqref{E1dm2} when $m_2 < 1$:
\begin{align}
  \label{E1dm2eps}
  E^\#_{\eps,1d}(m) := \frac12 \int_0^b \left( {\eps
  |m_2'|^2 \over 1 - m_2^2} + {2 h_\eps \over \eps} ( 1 - m_2) \right)
  \d x + {\delta_\eps \over 8 \pi} \int_\R \int_\R {(m_2(x) \eta_{\eps
  \delta_\eps}(x) - m_2(y) \eta_{\eps \delta_\eps}(y))^2 \over (x -
  y)^2} \d x \d y. 
\end{align}
Let us also define a rescaled version of this energy, up to an
additive constant:
\begin{align}
  \label{Feps}
  F_{\eps,1d}(m)
  & := \frac12 \int_0^{\eps^{-1} L_\eps^{-1} b} \left( {
    |m_2'|^2 \over 1 - m_2^2} + 2 \beta ( 1 - m_2) \right) \d x
    - {\lambda |\ln \eps| \over 2 \pi \ln |\ln \eps|} \notag \\
  & \qquad\qquad + {\lambda \over 8 \pi \ln |\ln \eps|} \int_\R
    \int_\R {(m_2(x) \widetilde \eta_{\delta_\eps / L_\eps}(x) -
    m_2(y) \widetilde \eta_{\delta_\eps / L_\eps}(y))^2 \over (x -
    y)^2} \d x \d y,  
\end{align}
where
$\widetilde \eta_{\delta_\eps / L_\eps}(x) := \eta(L_\eps \min(x,
\eps^{-1} L_\eps^{-1} b - x) / \delta_\eps)$. Using these definitions,
we have
\begin{align}
  \label{Eeps1dFeps}
  E^\#_{\eps,1d}(m) = {\lambda \over 2 \pi} + {\ln |\ln
  \eps| \over |\ln \eps|}  F_{\eps,1d}(m(\cdot / (\eps L_\eps)).
\end{align}
With these notations, proving Theorem \ref{t:E1deps} is equivalent to
showing that $\min F_{\eps,1d}(m(\cdot / (\eps L_\eps))$ converges to
$2 F_0(n_0)$ as $\eps \to 0$, where the minimization is done over
\begin{align}
  \label{Aeps1d}
  \mathcal A_\eps^{1d} := H^1((0, \eps^{-1} L_\eps^{-1} b); \mathbb S^1).
\end{align}
Below we show that this is indeed the case by establishing the
matching upper and lower bounds for $\min F_{\eps,1d}$.

To proceed, we separate the energy $F_{\eps,1d}$ into the local and
the non-local parts:
\begin{align}
  F_{\eps,1d}(m) = F_{\eps,1d}^{MM}(m) + F_{\eps,1d}^S(m),
\end{align}
where
\begin{align}
  \label{FepsMM}
  F_{\eps,1d}^{MM}(m) := \frac12 \int_0^{\eps^{-1} L_\eps^{-1} b} \left( {
    |m_2'|^2 \over 1 - m_2^2} + 2 \beta ( 1 - m_2) \right) \d x
\end{align}
is the Modica-Mortola type energy and
\begin{align}
  \label{FepsS}
  F_{\eps,1d}^S(m) :=  {\lambda \over 8 \pi \ln |\ln \eps|} \int_\R
  \int_\R {(m_2(x) 
  \widetilde \eta_{\delta_\eps / L_\eps}(x) - m_2(y) \widetilde
  \eta_{\delta_\eps / 
  L_\eps}(y))^2 \over (x - y)^2} \d x \d y - {\lambda |\ln \eps| \over
  2 \pi \ln |\ln \eps|} 
\end{align}
is the stray field energy, up to an additive constant. Note that using
the standard Modica-Mortola trick \cite{modica87}, one obtains a lower
bound for $F_{\eps,1d}^{MM}$.
\begin{lemma}
  \label{l:MM}
  Let $m = (m_1, m_2) \in \mathcal A_\eps^{1d}$ with $0 \leq m_2 <
  1$. Then for every $R \in (0, \eps^{-1} L_\eps^{-1} b/2]$ and every
  $r \in [0, R]$ we have
  \begin{align}
    \label{FepsMMlb}
    F_{\eps,1d}^{MM}(m) \geq 4 \sqrt{\beta} \left( 1 - \sqrt{1 + m_2(r)
    \over 2} \right) + 4 \sqrt{\beta} \left( 1 - \sqrt{1 +
    m_2(\eps^{-1} L_\eps^{-1} b - r)
    \over 2} \right) \notag \\
    - 8 \sqrt{\beta} \left( 1 - \sqrt{1 + m_2(R)
    \over 2} \right).
  \end{align}
\end{lemma}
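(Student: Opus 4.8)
The plan is to derive the bound in Lemma~\ref{l:MM} by applying the Modica--Mortola trick locally on two subintervals, $[r, R]$ near the left edge and $[\eps^{-1} L_\eps^{-1} b - R, \eps^{-1} L_\eps^{-1} b - r]$ near the right edge, rather than globally. First I would drop all contributions to $F_{\eps,1d}^{MM}$ outside these two intervals, using nonnegativity of the integrand; this only decreases the energy. On each of the two intervals, I would bound the remaining integral from below by completing the square exactly as in \eqref{E1dthMM}: writing $\tfrac12 |m_2'|^2/(1-m_2^2) + \beta(1-m_2) \geq -\beta^{1/2} m_2' (1 - m_2)^{1/2}(1-m_2^2)^{-1/2} = -\beta^{1/2} m_2' /\sqrt{1+m_2}$ pointwise after discarding the nonnegative square term $\tfrac12\bigl(|m_2'|/\sqrt{1-m_2^2} - \sqrt{2\beta}\sqrt{1-m_2}\bigr)^2$. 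The key elementary identity is that $-\beta^{1/2}\, m_2'/\sqrt{1+m_2}$ is an exact derivative: it equals $\tfrac{d}{dx}\bigl[-2\sqrt{\beta}\,\sqrt{1+m_2}\bigr]$, so integrating over $[r,R]$ produces the telescoping expression $2\sqrt{\beta}\bigl(\sqrt{1+m_2(r)} - \sqrt{1+m_2(R)}\bigr)$, which after rescaling by the constant factors is precisely $4\sqrt{\beta}\bigl(1 - \sqrt{(1+m_2(R))/2}\bigr) - 4\sqrt{\beta}\bigl(1 - \sqrt{(1+m_2(r))/2}\bigr)$ up to sign. Wait --- I need the boundary values in the right order, so I should be careful about orientation.

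Here is the cleaner bookkeeping I would do. On the left interval $[r,R]$, I pick the sign of the square-completion so that the exact-derivative term reads $+\tfrac{d}{dx}\bigl[2\sqrt{\beta}\sqrt{1+m_2}\bigr]$ if that makes the telescoping come out with the right signs --- actually the correct choice is to use the Modica--Mortola identity in the form that gives a \emph{lower} bound regardless of the sign of $m_2'$, namely $\tfrac12|m_2'|^2/(1-m_2^2) + \beta(1-m_2) \geq \bigl|\tfrac{d}{dx}\bigl(2\sqrt{\beta}\sqrt{1+m_2}\bigr)\bigr| \geq \bigl|\,2\sqrt{\beta}\sqrt{1+m_2(R)} - 2\sqrt{\beta}\sqrt{1+m_2(r)}\,\bigr| \geq 2\sqrt{\beta}\sqrt{1+m_2(r)} - 2\sqrt{\beta}\sqrt{1+m_2(R)}$, where the last step just drops the absolute value. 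Rewriting $2\sqrt{\beta}\sqrt{1+m_2(r)} = 4\sqrt{\beta} - 4\sqrt{\beta}\bigl(1 - \sqrt{(1+m_2(r))/2}\bigr) \cdot$ (adjusting the constant: $2\sqrt\beta \sqrt{1+s} = 2\sqrt2\sqrt\beta\sqrt{(1+s)/2} = 2\sqrt2\sqrt\beta - 2\sqrt2\sqrt\beta(1-\sqrt{(1+s)/2})$) I would reconcile the numerical constants with those in \eqref{FepsMMlb}; this is a routine but slightly fiddly arithmetic check that I would carry out explicitly. I then repeat the identical argument on the right interval $[\eps^{-1}L_\eps^{-1}b - R, \eps^{-1}L_\eps^{-1}b - r]$, obtaining the second pair of terms, and add the two lower bounds together with the discarded nonnegative middle portion to arrive at \eqref{FepsMMlb}.

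The two technical points requiring a little care are: (i) the pointwise inequality and the exact-derivative computation presume $m_2$ is absolutely continuous with $m_2 < 1$ a.e., which holds since $m \in H^1$ with the stated constraint, so $\sqrt{1+m_2}$ is also $H^1$ and the fundamental theorem of calculus applies to it on any subinterval; and (ii) the constants $R, r$ are merely evaluation points, not requiring any smallness --- the only structural requirement is $R \le \eps^{-1}L_\eps^{-1}b/2$ so that the two intervals $[r,R]$ and $[\eps^{-1}L_\eps^{-1}b-R, \eps^{-1}L_\eps^{-1}b-r]$ are disjoint and both contained in $(0, \eps^{-1}L_\eps^{-1}b)$, which is exactly the hypothesis. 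I expect the main (though still minor) obstacle to be simply getting all the numerical prefactors and the $-8\sqrt\beta(1-\sqrt{(1+m_2(R))/2})$ cross term to line up: the $R$-dependent boundary terms appear once from each of the two intervals with a minus sign, which is why the coefficient is $8$ rather than $4$, and one must verify no spurious additive constant survives after expressing everything in the $1 - \sqrt{(1+\cdot)/2}$ normalization. Everything else is a direct transcription of the one-line Modica--Mortola estimate already displayed in \eqref{E1dthMM}.
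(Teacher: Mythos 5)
Your overall strategy---localize the Modica--Mortola completion of the square and telescope via the fundamental theorem of calculus applied to $\sqrt{1+m_2}$---is the right one (it is also how the paper argues, cf.\ the proof of the two-dimensional analogue, Lemma \ref{l:MM2d}), but your choice of the second interval makes the argument prove a different inequality from the one stated. Writing $T:=\eps^{-1}L_\eps^{-1}b$ and $G(s):=4\sqrt{\beta}\left(1-\sqrt{(1+s)/2}\right)$, the telescoping on your right interval $[T-R,\,T-r]$ produces the boundary value at $T-R$, not at $R$, so your two intervals yield
\begin{align*}
F^{MM}_{\eps,1d}(m)\;\geq\; G(m_2(r))-G(m_2(R))+G(m_2(T-r))-G(m_2(T-R)),
\end{align*}
whereas \eqref{FepsMMlb} requires $-2G(m_2(R))$. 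Since the lemma is stated for an arbitrary $m\in\mathcal A_\eps^{1d}$ with $0\le m_2<1$ (no reflection symmetry $x\mapsto T-x$ is assumed), there is no a priori comparison between $m_2(T-R)$ and $m_2(R)$, and your assertion that ``the $R$-dependent boundary terms appear once from each of the two intervals'' is exactly the step that fails. The fix is to let the two intervals share the interior endpoint $R$: apply the bound on $[r,R]$ and on $[R,\,T-r]$ (admissible because $r\le R\le T/2\le T-r$), so that the evaluation at $R$ occurs once from each piece and one gets $G(m_2(r))+G(m_2(T-r))-2G(m_2(R))$, i.e.\ \eqref{FepsMMlb}.

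Two smaller slips in your per-interval estimate, which you flag as ``fiddly arithmetic'' but which do matter: the sharp completion of the square gives the pointwise bound $\tfrac12|m_2'|^2/(1-m_2^2)+\beta(1-m_2)\ \ge\ \sqrt{2\beta}\,|m_2'|/\sqrt{1+m_2}=\bigl|\bigl(2\sqrt{2\beta}\,\sqrt{1+m_2}\,\bigr)'\bigr|$; your version drops the factor $\sqrt{2}$ and would come out a factor $1/\sqrt{2}$ short of the constants in \eqref{FepsMMlb}. Moreover, when discarding the absolute value you keep the branch $\sqrt{1+m_2(r)}-\sqrt{1+m_2(R)}$, which is the wrong sign: the lower bound needed on $[r,R]$ is $2\sqrt{2\beta}\bigl(\sqrt{1+m_2(R)}-\sqrt{1+m_2(r)}\bigr)=G(m_2(r))-G(m_2(R))$. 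With the sharp constant, the correct branch, and the corrected decomposition $[r,R]\cup[R,T-r]$, the rest of your argument (absolute continuity of $\sqrt{1+m_2}$ for $m\in H^1$ with $0\le m_2<1$, and discarding the nonnegative energy outside the two intervals) is sound and gives the lemma.
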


\noindent In order to obtain the upper and lower bounds on the stray
field energy we prove the following lemma that offers two
characterizations of the one-dimensional fractional homogeneous
Sobolev norm. Here, by $\mathring{H}^1(\R^2)$ we understand the space
of functions in $L^2_{loc}(\R^2)$ whose distributional gradient is in
$L^2(\R^2; \R^2)$. 

\begin{lemma}\label{l:Str}
  Let $u \in H^1(\R)$ and have compact support. Then
  \begin{enumerate}[(i)]
  \item
    \begin{equation}
      \label{StrEq}
      \frac{1}{4 \pi} \int_\R \int_\R {(u(x) - u(y) )^2 \over (x -
        y)^2} \d x \d y =  {1 \over 2 \pi} \int_\R
      \int_\R \ln |x - y|^{-1} u'(x) u'(y) \d x \d y.
    \end{equation}
  \item 
  \end{enumerate}
  \begin{align}
    \label{StrIneq}
    \frac{1}{4\pi} \int_\R \int_\R {(u(x) - u(y) )^2 \over (x - y)^2}
    \d x \d y  =  -  \min_{v \in \mathring{H}^1(\R^2)} \left( \int_\R
    \int_\R |\nabla v(x,z)|^2 \d x \d z   +   2 \int_\R v(x,0) u'(x)
    \d x \right). 
  \end{align}
\end{lemma}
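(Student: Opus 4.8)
The plan is to diagonalize the quadratic forms on the two sides of each identity; the mean-zero property $\int_\R u'\d x = 0$, which follows from the compact support of $u$, is used repeatedly.

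For part (i) I would first reduce by density to $u \in C_c^\infty(\R)$, since both sides of \eqref{StrEq} depend continuously on $u$ in $H^1(\R)$ among functions supported in a fixed bounded interval (the logarithmic bilinear form being Hilbert--Schmidt on such a window). For smooth $u$ I would then integrate by parts twice, using $\partial_x\partial_y \ln|x-y| = (x-y)^{-2}$: moving the $x$-derivative off the logarithm converts $\int_\R\int_\R (u(x)-u(y))^2 (x-y)^{-2}\d x\d y$ into $2\int_\R\int_\R (u(x)-u(y))u'(x)(x-y)^{-1}\d x\d y$, and a second integration by parts in $y$ turns this into $-2\int_\R\int_\R u'(x)u'(y)\ln|x-y|\d x\d y$, which after multiplication by $\tfrac{1}{4\pi}$ is exactly \eqref{StrEq}. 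The one delicate point is that the boundary contributions at infinity are nominally of the indeterminate form $\infty-\infty$; this is resolved by performing the inner integrations over the symmetric windows $|x-y|<T$, so that the spurious term is $u(x)u'(x)\ln\frac{|x-T|}{|x+T|}\to 0$ as $T\to\infty$. (Alternatively, and more transparently for the constants, one may invoke Parseval: both sides equal the same positive multiple of $\int_\R|\xi|\,|\hat u(\xi)|^2\d\xi$, the logarithmic kernel contributing the symbol $|\xi|^{-1}$ once its Dirac-mass and additive-constant parts are annihilated by $\int_\R u'=0$.)

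For part (ii) I would combine the direct method with a completion of the square and then identify the minimizer via the planar Newtonian kernel. First one must check that the minimization is well posed on $\mathring H^1(\R^2)$: the trace on $\{z=0\}$ satisfies $\|v(\cdot,0)\|_{\mathring H^{1/2}(\R)}^2 \le C\|\nabla v\|_{L^2(\R^2)}^2$, and since $u'$ has zero mean the linear functional $v\mapsto\int_\R v(x,0)u'(x)\d x$ is well defined and bounded on $\mathring H^1(\R^2)$ via the $\mathring H^{\pm 1/2}$ duality; hence $Q(v):=\int_\R\int_\R|\nabla v|^2\d x\d z + 2\int_\R v(x,0)u'(x)\d x$ is coercive in $\|\nabla v\|_{L^2}$ and strictly convex modulo additive constants, so a minimizer $v^*$ exists (extract a minimizing sequence, normalize by subtracting a local average so that Poincar\'e applies, pass to a weak limit, use weak lower semicontinuity together with weak continuity of the trace) and is unique up to a constant. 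Its Euler--Lagrange equation is $\int_\R\int_\R\nabla v^*\cdot\nabla\phi\d x\d z = -\int_\R\phi(x,0)u'(x)\d x$ for all $\phi\in\mathring H^1(\R^2)$, i.e.\ $v^*$ is harmonic off $\{z=0\}$ with the transmission condition $\partial_z v^*(\cdot,0^+)-\partial_z v^*(\cdot,0^-)=u'$; taking $\phi=v^*$ gives $\int_\R\int_\R|\nabla v^*|^2\d x\d z = -\int_\R v^*(x,0)u'(x)\d x$, and therefore $\min Q = Q(v^*) = \int_\R v^*(x,0)u'(x)\d x$.

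It remains to evaluate $v^*$ on the line. With $G(X)=\tfrac{1}{2\pi}\ln|X|^{-1}$ the fundamental solution of $-\Delta$ in $\R^2$, the solution of the above transmission problem with $\nabla v^*\in L^2(\R^2)$ is $v^*(X) = -\int_\R G\bigl(X-(y,0)\bigr)u'(y)\d y$; the decay $v^*(X)=O(|X|^{-1})$ and $\nabla v^*(X)=O(|X|^{-2})\in L^2(\R^2)$ follows once more from $\int_\R u'=0$ (subtract $G(X)$ from the kernel before integrating), and the jump relation is a standard computation. Hence $v^*(x,0)=\tfrac{1}{2\pi}\int_\R\ln|x-y|\,u'(y)\d y$, so that $\min Q = \tfrac{1}{2\pi}\int_\R\int_\R\ln|x-y|\,u'(x)u'(y)\d x\d y = -\tfrac{1}{4\pi}\int_\R\int_\R\frac{(u(x)-u(y))^2}{(x-y)^2}\d x\d y$ by part (i), which is \eqref{StrIneq}. (Equivalently, the partial Fourier transform in $x$ decouples $Q$ into the fiberwise problems $\min_{a\in\C}\bigl(2|\xi|\,|a|^2 + 2\,\mathrm{Re}(a\,\overline{\widehat{u'}(\xi)})\bigr) = -\tfrac12|\xi|^{-1}|\widehat{u'}(\xi)|^2 = -\tfrac12|\xi|\,|\hat u(\xi)|^2$, attained by the harmonic profile $a(\xi)e^{-|\xi||z|}$, and integrating in $\xi$ via Parseval together with (i) reproduces \eqref{StrIneq}.) I expect the main obstacle to be the well-posedness and trace theory on the homogeneous space $\mathring H^1(\R^2)$, which does not embed into $L^2(\R^2)$, together with the verification that the explicit candidate $v^*$ has a square-integrable gradient; once these are in place, the Green's-function representation and part (i) close the argument.
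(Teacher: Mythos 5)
Your proposal is correct and, for part (ii), follows essentially the same route as the paper: the direct method on $\mathring H^1(\R^2)$ viewed modulo additive constants, the Euler--Lagrange/transmission problem solved explicitly by the logarithmic Newtonian potential of $u'$ (whose decay and square-integrable gradient rest on $\int_\R u'(x)\,\mathrm{d}x=0$), the identity $\min Q=\int_\R v_0(x,0)\,u'(x)\,\mathrm{d}x$ obtained by testing with the minimizer itself, and finally part (i) to identify the value. For part (i) the paper merely cites the appendix of \cite{lms:non18}, so your double integration by parts with symmetric truncation (or the Parseval argument, both giving the common value $\tfrac{1}{4\pi}\int_\R|\xi|\,|\hat u(\xi)|^2\,\mathrm{d}\xi$) is a correct self-contained substitute, with the constants checking out.
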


\begin{proof}
  For the proof of \eqref{StrEq}, we refer to the Appendix in
  \cite{lms:non18}. To obtain \eqref{StrIneq}, we first note that the
  minimum in the right-hand side of \eqref{StrIneq} is
  attained. Indeed, considering the elements of the homogeneous
  Sobolev space $\mathring{H}^1(\R^2)$ as equivalence classes of
  functions modulo additive constants makes this space into a Banach
  space \cite{ortner12}, and by coercivity and strict convexity of the
  expression in the brackets we hence get existence of a unique
  minimizer (up to an additive constant). Note that the integrals in
  the right-hand side of \eqref{StrIneq} are unchanged when an
  arbitrary constant is added to $v$, and that
  $v(\cdot, 0) \in L^2_{loc}(\R)$ is well defined as the trace of a
  Sobolev function.

  The minimizer $v_0 \in \mathring{H}^1(\R^2)$ of the expression in
  the right-hand side of \eqref{StrIneq} solves the following Poisson
  type equation
  \begin{align}
    \label{eq:v}
    \Delta v_0 = u'(x) \delta(z) \qquad \text{in} \ \mathcal D'(\R^2),
  \end{align}
  where $\delta(\cdot)$ is the one-dimensional Dirac
  delta-function. Therefore, $v_0$ is easily seen to be (again, up to
  an additive constant)
  \begin{align}
    \label{vharm}
    v_0(x, z) = {1 \over 2 \pi} \int_\R u'(y) \ln \sqrt{(x - y)^2 + z^2} \,
    \d y. 
  \end{align}
  In particular, since $u'$ has compact support and, therefore,
  integrates to zero over $\R$, we have an estimate for the function
  $v_0$ in \eqref{vharm}:
  \begin{align}
    \label{v0decay}
    |v_0(x,z)| \leq {C \over \sqrt{x^2 + z^2}} \qquad  |\nabla
    v_0(x,z)| \leq {C \over x^2 + z^2},
  \end{align}
  for some $C > 0$ and all $x^2 + z^2$ large enough. Furthermore, it
  is not difficult to see that $v_0 \in C^{1/2}(\R^2)$:
  \begin{align}
    |v(x_1, z_1) - v(x_2, z_2)|^2 \leq {1 \over 16 \pi^2} \int_\R
    |u'(y)|^2 \d y \int_\R \ln^2 \left\{ {(y - x_1)^2 + z_1^2 \over (y -
    x_2)^2 + z_2^2} \right\} \d y,
  \end{align}
  where we used Cauchy-Schwarz inequality, and the last integral may
  be dominated by $C (|x_1 - x_2| + |z_1 - z_2|)$ for some universal
  $C > 0$.

  We now multiply both parts of \eqref{eq:v} by $v_0$ and integrate
  over $\R^2$. After integrating by parts and taking into account
  \eqref{v0decay}, we obtain
  \begin{align}
    \label{vbyparts}
    \int_\R \int_\R |\nabla v_0 (x,z)|^2  \d x \d z = - \int_\R  v_0
    (x, 0) u'(x) \d x.
  \end{align}
  From this, we get
  \begin{align}
    \label{v0Str}
    \min_{v \in \mathring{H}^1(\R^2)} \left( \int_\R \int_\R
    |\nabla v(x,z)|^2 \d x \d z   +   2 \int_\R v(x,0) u'(x) \d x
    \right) = \int_\R  v_0 (x, 0) u'(x) \d x. 
  \end{align} 
  Finally, combining \eqref{StrEq}, \eqref{vharm} and \eqref{v0Str},
  we obtain \eqref{StrIneq}.
\end{proof}

Using the definition of $F_{\eps,1d}^S(m)$ and Lemma~\ref{l:Str}, we
arrive at the following lower bound for the stray field energy.
\begin{lemma}
  \label{l:FepsSlb}
  Let $m \in \mathcal A_\eps^{1d}$. Then
  \begin{align}
    \label{FepsSlb}
    F^S_{\eps,1d}(m) \geq     - {\lambda |\ln \eps| \over 2 \pi \ln
    |\ln \eps|} - {\lambda \over 2 \ln |\ln \eps|}
    \int_\R \int_\R |\nabla v(x,z)|^2 \d x \d z \notag \\
    -  {\lambda \over \ln |\ln \eps|} \int_0^{\eps^{-1} L_\eps^{-1} b}
    v(x,0)  \left( m_2(x) \widetilde \eta_{\delta_\eps /
    L_\eps}(x) \right)' \d x, 
  \end{align}
  for every $v \in \mathring{H}^1(\R^2)$, where $v(\cdot, 0)$ is
  understood in the sense of trace.
\end{lemma}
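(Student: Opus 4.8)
The plan is to obtain \eqref{FepsSlb} as a direct consequence of the dual (minimization) characterization of the homogeneous $\mathring H^{1/2}(\R)$ seminorm proved in part (ii) of Lemma~\ref{l:Str}. First I would recall that, by the definition \eqref{FepsS},
\[
F^S_{\eps,1d}(m) = {\lambda \over 8 \pi \ln |\ln \eps|}\, [\,u\,]^2_{1/2} - {\lambda |\ln \eps| \over 2 \pi \ln |\ln \eps|},
\]
where $u(x) := m_2(x)\,\widetilde\eta_{\delta_\eps/L_\eps}(x)$, extended by zero to all of $\R$, and $[\,u\,]^2_{1/2}$ denotes the Gagliardo double integral $\int_\R\int_\R (u(x)-u(y))^2/(x-y)^2\,\d x\,\d y$. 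I would first check that $u$ is an admissible function for Lemma~\ref{l:Str}: since $m \in \mathcal A_\eps^{1d} = H^1((0,\eps^{-1}L_\eps^{-1}b);\mathbb S^1)$ we have $m_2 \in H^1$ of the interval with $|m_2|\le 1$, while $\widetilde\eta_{\delta_\eps/L_\eps}$ is Lipschitz, equals $1$ away from the endpoints and vanishes at $0$ and $\eps^{-1}L_\eps^{-1}b$; hence the product $u$ lies in $H^1(\R)$ and has compact support contained in $[0,\eps^{-1}L_\eps^{-1}b]$, so $u' \in L^2(\R)$ and $\operatorname{supp} u' \subseteq [0,\eps^{-1}L_\eps^{-1}b]$.

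Next I would invoke \eqref{StrIneq} with this $u$, which gives
\[
{1 \over 4\pi}[\,u\,]^2_{1/2} = -\min_{v \in \mathring H^1(\R^2)}\left( \int_\R\int_\R |\nabla v(x,z)|^2 \d x\,\d z + 2\int_\R v(x,0)\,u'(x)\,\d x\right).
\]
Since this is a minimum, for \emph{every} $v \in \mathring H^1(\R^2)$ we have the pointwise bound
\[
{1 \over 4\pi}[\,u\,]^2_{1/2} \ge -\int_\R\int_\R |\nabla v(x,z)|^2 \d x\,\d z - 2\int_\R v(x,0)\,u'(x)\,\d x.
\]
Multiplying through by ${\lambda \over 2\pi \ln|\ln\eps|}$ (which equals ${2\lambda \over 4\pi \ln|\ln\eps|}$, consistent with the ${1\over 8\pi \ln|\ln\eps|}$ prefactor on $[\,u\,]^2_{1/2}$), I obtain
\[
{\lambda \over 8\pi \ln|\ln\eps|}[\,u\,]^2_{1/2} \ge -{\lambda \over 2\ln|\ln\eps|}\int_\R\int_\R|\nabla v(x,z)|^2\d x\,\d z - {\lambda \over \ln|\ln\eps|}\int_\R v(x,0)\,u'(x)\,\d x.
\]
Subtracting the constant ${\lambda|\ln\eps| \over 2\pi\ln|\ln\eps|}$ from both sides and using $\operatorname{supp} u' \subseteq [0,\eps^{-1}L_\eps^{-1}b]$ to restrict the last integral to that interval yields exactly \eqref{FepsSlb}, with $u' = (m_2 \widetilde\eta_{\delta_\eps/L_\eps})'$ understood as the weak derivative.

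There is really no substantive obstacle here: the lemma is a one-line corollary of Lemma~\ref{l:Str}(ii) once admissibility of $u$ is verified and the numerical constants are matched. The only points requiring a modicum of care are (a) confirming that $u = m_2 \widetilde\eta_{\delta_\eps/L_\eps} \in H^1(\R)$ with compact support — which rests on the product rule for $H^1 \cdot W^{1,\infty}$ and the vanishing of $\widetilde\eta$ at the endpoints — and (b) keeping the trace $v(\cdot,0)$ well defined, which is guaranteed because $v \in \mathring H^1(\R^2)$ has a trace in $L^2_{loc}(\R)$ as already noted in the proof of Lemma~\ref{l:Str}, and this pairs against $u' \in L^2$ of compact support. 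I would also remark that the inequality is valid for every test function $v$, not just the optimal one, which is precisely what makes this lemma useful: it will later be applied with a carefully chosen, explicit $v$ to extract a sharp lower bound on the stray-field contribution.
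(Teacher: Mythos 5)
Your proof is correct and follows essentially the same route as the paper, which obtains Lemma \ref{l:FepsSlb} directly from the definition \eqref{FepsS} and the dual characterization \eqref{StrIneq} of Lemma \ref{l:Str}(ii), exactly as you do; your constant bookkeeping and the admissibility check for $u = m_2\,\widetilde\eta_{\delta_\eps/L_\eps} \in H^1(\R)$ with compact support are both accurate.
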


We will also find useful the following basic upper bound for the
minimum energy. 

\begin{lemma}
  \label{l:upe2}
  There exists $C > 0$ such that
  \begin{align}
    \label{Fube2}
    \min_{m \in \mathcal A_\eps^{1d}} F_{\eps,1d}(m) \leq C,
  \end{align}
  for all $\eps$ sufficiently small. Furthermore, if $F_{\eps,1d}$ is
  minimized by $m = e_2$, then the reverse inequality also holds. 
\end{lemma}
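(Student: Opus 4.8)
The plan is to prove Lemma \ref{l:upe2} by exhibiting an explicit competitor, evaluating its energy, and optimizing the free parameter.

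First I would construct a one-parameter family of competitors adapted to the natural edge-domain-wall scale. Concretely, fix $\theta_0 \in (0, \tfrac\pi2]$ and let $\theta_\infty$ be the explicit sine-Gordon profile from \eqref{thinf}; define $m$ on $(0, \eps^{-1} L_\eps^{-1} b)$ by $m = (-\sin\theta, \cos\theta)$ where $\theta(x) = \theta_\infty(x)$ near $x=0$, $\theta(x) = \theta_\infty(\eps^{-1}L_\eps^{-1}b - x)$ near $x = \eps^{-1}L_\eps^{-1}b$, and $\theta \equiv 0$ in between, with a smooth cutoff gluing these on, say, the dyadic interval $(\tfrac14 \eps^{-1}L_\eps^{-1}b, \tfrac34 \eps^{-1}L_\eps^{-1}b)$; since $\theta_\infty$ decays exponentially, the gluing region contributes a negligible (in fact exponentially small in $|\ln\eps|$) amount to $F^{MM}_{\eps,1d}$. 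This gives $F^{MM}_{\eps,1d}(m) \le 2 E^\infty_{1d}(\theta_\infty) + o(1) = 16\sqrt\beta \sin^2(\theta_0/4) + o(1)$ by \eqref{Einfthinf}, which is $O(1)$.

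Next I would bound the stray field part $F^S_{\eps,1d}(m)$. Here the key point is that $F^S_{\eps,1d}$ is an $\mathring H^{1/2}$-type Gagliardo seminorm of $m_2 \widetilde\eta_{\delta_\eps/L_\eps}$ minus the large constant $\tfrac{\lambda|\ln\eps|}{2\pi \ln|\ln\eps|}$. The function $m_2 \widetilde\eta_{\delta_\eps/L_\eps}$ equals $1$ on most of $(0,\eps^{-1}L_\eps^{-1}b)$ and drops to zero in boundary layers of width $\delta_\eps/L_\eps$, so its $\mathring H^{1/2}$ seminorm is dominated by the contribution of an (essentially) indicator function of an interval of length $\sim \eps^{-1}L_\eps^{-1}b$, whose Gagliardo seminorm grows logarithmically in the length — i.e. like $2 \cdot \tfrac12 \cdot \tfrac{1}{\pi}\cdot(\text{const} + \ln(\text{length}))$ per endpoint. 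Since $\ln(\eps^{-1}L_\eps^{-1}b) = |\ln\eps| + O(\ln|\ln\eps|)$, multiplying by the prefactor $\tfrac{\lambda}{8\pi \ln|\ln\eps|}$ (with the factor-of-two from the Gagliardo symmetrization) produces exactly $\tfrac{\lambda |\ln\eps|}{2\pi \ln|\ln\eps|} + O(1)$, so the divergent pieces cancel and $F^S_{\eps,1d}(m) = O(1)$. The cleanest way to make this rigorous is to use the logarithmic-kernel representation \eqref{StrEq} of Lemma \ref{l:Str}(i), since then the integrand is supported where $(m_2\widetilde\eta)' \ne 0$, i.e. in the two thin boundary layers, and one estimates the self-interaction of each layer ($\sim |\ln(\delta_\eps/L_\eps)| = O(\ln|\ln\eps|)$, killed by the prefactor) and the mutual interaction of the two layers ($\sim \ln(\eps^{-1}L_\eps^{-1}b)$, producing the cancellation with the subtracted constant). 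Adding the two estimates gives \eqref{Fube2}.

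For the final sentence: if the minimizer of $F_{\eps,1d}$ is $m = e_2$, then directly $\min F_{\eps,1d} = F_{\eps,1d}(e_2)$; but with $m_2 \equiv 1$ on $(0,\eps^{-1}L_\eps^{-1}b)$ the Modica-Mortola part vanishes and the stray field part is the Gagliardo seminorm of $\widetilde\eta_{\delta_\eps/L_\eps}$ (which equals $1$ away from the two boundary layers) minus the constant, and the same logarithmic computation as above shows $F_{\eps,1d}(e_2) \to \tfrac{\lambda}{2\pi} = 2 F_0(1) = 2\min F_0$ when $\beta \ge \beta_c$ — in any case $F_{\eps,1d}(e_2) \ge -C'$ for some $C' > 0$ and $\eps$ small, giving the reverse inequality; actually since we want a genuine lower bound matching $C$, it suffices to note $F^{MM}_{\eps,1d}(e_2) = 0$ and $F^S_{\eps,1d}(e_2) = \tfrac{\lambda}{8\pi \ln|\ln\eps|}\|\widetilde\eta_{\delta_\eps/L_\eps}\|^2_{\mathring H^{1/2}} - \tfrac{\lambda|\ln\eps|}{2\pi\ln|\ln\eps|}$, which is bounded (above and below) by the logarithmic estimate, hence $|{\min F_{\eps,1d}}| \le C$. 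The main obstacle is the careful bookkeeping in the stray-field estimate: one must track the constant in front of the logarithm through the Gagliardo-to-log-kernel identity and verify that precisely the subtracted term $\tfrac{\lambda|\ln\eps|}{2\pi\ln|\ln\eps|}$ is produced, with all lower-order terms genuinely $O(1)$ after dividing by $\ln|\ln\eps|$ — in particular one must confirm the self-energy of each boundary layer, which scales like $\ln|\ln\eps|$ before the prefactor, contributes only $O(1)$ and not something that diverges.
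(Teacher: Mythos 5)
Your proposal is correct, and its decisive ingredient is the same as the paper's: the logarithmic-kernel identity of Lemma \ref{l:Str}(i), with the stray-field double integral split into the self-energies of the two edge layers (of order $\ln(L_\eps/\delta_\eps)\sim\ln|\ln\eps|$, hence $O(1)$ after the prefactor $\lambda/(8\pi\ln|\ln\eps|)$) and the mutual interaction of the two layers, whose $2\ln(\eps^{-1}L_\eps^{-1}b)$ term cancels the subtracted constant $\tfrac{\lambda|\ln\eps|}{2\pi\ln|\ln\eps|}$ up to $O(1)$; your bookkeeping of the constants is consistent. The only real difference is the choice of competitor for the first claim: the paper simply tests with $m=e_2$, for which $F^{MM}_{\eps,1d}$ vanishes identically and the whole proof reduces to the one stray-field computation, whereas you build the truncated Modica--Mortola wall profile with a free trace $\theta_0$. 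That competitor also works (it is essentially the construction the paper reserves for the sharp upper bound in Step 4 of the proof of Theorem \ref{t:E1deps}), but it costs you extra terms: the support of $(m_2\widetilde\eta_{\delta_\eps/L_\eps})'$ is then not only the two thin cutoff layers but also the $O(1)$-wide wall regions, so the self-energy of the walls and the wall--layer cross terms must be checked to be $O(1)$ (they are, as in the paper's estimates of $I_1,I_2,I_3$), and the exponentially small gluing tail must be controlled (it is). Since for the second claim you in any case evaluate $F_{\eps,1d}(e_2)$ by exactly the paper's computation and observe that the same logarithmic estimates bound it from below as well, your route ends up strictly containing the paper's proof; choosing $e_2$ from the start would have made the first claim a one-line consequence of that single computation.
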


\begin{proof}
  The proof is obtained by testing the energy with $m = e_2$. Then
  $F_{\eps,1d}^{MM}(m) = 0$, and by Lemma \ref{l:Str} we have
  \begin{align}
    4 \pi \lambda^{-1} \ln |\ln \eps| F_{\eps,1d}^S(m) +
    2 |\ln \eps| 
    & = \int_0^{b
      \over \eps L_\eps}
      \int_0^{b \over \eps L_\eps} \ln |x - y|^{-1} \widetilde
      \eta_{\delta_\eps / 
      L_\eps}'(x) \widetilde \eta_{\delta_\eps / L_\eps}'(y) \d x \d y
      \notag \\ 
    & = 2 \int_0^{b \over \eps \delta_\eps} \int_0^{b \over \eps
      \delta_\eps} \ln 
      |x - y|^{-1} \eta'(x) \eta'(y) \d x \d y 
      \notag \\
    & \qquad - 2 \int_0^{b \over \eps \delta_\eps} \int_0^{b \over \eps
      \delta_\eps}  \ln |x - y|^{-1} \eta'(x) \eta'(\eps^{-1} 
      \delta_\eps^{-1} b - y) \d x \d y  \notag \\
    & \leq C + 2 \ln (\eps^{-1} \delta_\eps^{-1} b)\leq 2 |\ln \eps| +
      2 \ln |\ln \eps| + C', 
  \end{align}
  for some $C, C' > 0$ and all $\eps \ll 1$, where we took into
  account \eqref{lamdeleps}.  This inequality is equivalent to
  \eqref{Fube2}. Finally, if $m = e_2$ is the minimizer, the matching
  asymptotic lower bound then follows.
\end{proof}

\begin{proof}[Proof of Theorem \ref{t:E1deps}.]
  Let $m_\eps$ be a minimizer of $F_{\eps,1d}$ over
  $\mathcal A_\eps^{1d}$. Note that in view of Lemma \ref{l:upe2} and
  Theorem \ref{t:1d} we may assume that $m_{2,\eps} < 1$. With the
  help of the rescalings introduced earlier, proving Theorem
  \ref{t:E1deps} amounts to establishing that
  \begin{align}
    \label{F0infsup1d}
    2 F_0(n_0) \leq \liminf_{\eps \to 0} F_{\eps,1d}(m_\eps) \leq
    \limsup_{\eps \to 0} F_{\eps,1d}(m_\eps) \leq 2 F_0(n_0),
  \end{align}
  where $n_0 \in [0,1]$ is the minimizer of $F_0$ from Lemma
  \ref{l:F0}. The proof proceeds in four steps.

  \medskip
  
  \noindent \emph{Step 1: Construction of a test potential.} We first
  establish the liminf inequality in \eqref{F0infsup1d}. Focusing on
  the stray field energy, we use Lemma \ref{l:FepsSlb} with the test
  function $v \in \mathring{H}^1(\R^2)$ constructed as follows. For
  $n_\eps := m_{\eps,2}(\delta_\eps / L_\eps)$, define
  \begin{align}
    \label{v1-1d}
    v_1(\rho) :=
    \begin{cases}
      -{n_\eps \over 2 \pi} \ln \left( { b \over 2 \eps \delta_\eps}
      \right), & 0 \leq \rho \leq \delta_\eps / L_\eps , \\
      -{n_\eps \over 2 \pi} \ln \left( { b \over 2 \eps L_\eps \rho}
      \right), & \delta_\eps / L_\eps \leq \rho \leq b / (2 \eps
      L_\eps), \\
      0 , & \rho \geq b / (2 \eps L_\eps),
    \end{cases}
  \end{align}
  and
  \begin{align}
    \label{v2-1d}
    v_2(\rho) :=
    \begin{cases}
      {n_\eps - 1 \over 2 \pi} \ln \left( { b \over 2 \eps
          L_\eps} \right), & 0 \leq \rho \leq 1 , \\
      {n_\eps - 1 \over 2 \pi} \ln \left( { b \over 2 \eps L_\eps
          \rho} \right), & 1 \leq \rho \leq b / (2
      \eps L_\eps), \\
      0 , & \rho \geq b / (2 \eps L_\eps),
    \end{cases}
  \end{align}
  We then define, for all $(x, z) \in \R^2$, the test potential
  \begin{multline}
    \label{v-1d}
    v(x, z) := v_1\left( \sqrt{x^2 + z^2} \right) + v_2 \left(
      \sqrt{x^2 + z^2} \right) \\
    - v_1 \left( \sqrt{(\eps^{-1} L_\eps^{-1} b - x)^2 + z^2} \right)
    - v_2 \left( \sqrt{(\eps^{-1} L_\eps^{-1} b - x)^2 + z^2} \right).
  \end{multline}
  Clearly, $v$ is admissible. Furthermore, in view of the symmetry of
  $m_\eps$ guaranteed by Theorem \ref{t:1d} we have
  \begin{align}
    \label{v1dtracesymm}
    \int_0^{\eps^{-1} L_\eps^{-1} b} v(x, 0) \left( m_{\eps,2}(x) \widetilde
    \eta_{\delta_\eps / L_\eps} (x) \right)' \d x = 2 \int_0^{\eps^{-1}
    L_\eps^{-1} b/2} v(x, 0) \left( m_{\eps,2}(x) \widetilde 
    \eta_{\delta_\eps / L_\eps} (x) \right)' \d x. 
  \end{align}
  Similarly, we have
  \begin{align}
    \label{gradvsymm}
    \int_\R \int_\R |\nabla v(x, z)|^2 \d x \d z = 2 \int_\R
    \int_{-\infty}^{\eps^{-1} L_\eps^{-1} b / 2}
    |\nabla v(x, z)|^2 \d x \d z \notag \\
   = 4 \pi \int_0^{\eps^{-1} L_\eps^{-1} b / 2} |\nabla v_1(\rho) +
    \nabla v_2(\rho)|^2 \rho \d \rho.
  \end{align}
  Carrying out the integration in polar coordinates yields
  \begin{align}
    \label{gradest}
    \int_\R \int_\R |\nabla v(x, z)|^2 \d x \d z = 4 \pi
    \int_{\delta_\eps / L_\eps}^1 |\nabla v_1(\rho)|^2 \rho \d \rho + 4 \pi
    \int_1^{\eps^{-1} L_\eps^{-1} b/2} |\nabla v_1(\rho) + \nabla
    v_2(\rho)|^2 \rho \d \rho \notag \\
    = {n_\eps^2 \over \pi} \ln \left( {L_\eps \over \delta_\eps}
    \right) + {1 \over \pi} \ln \left( {b \over 2 \eps L_\eps}
    \right).  
  \end{align}

  \medskip
  
  \noindent \emph{Step 2: Computation of the potential energy.} We now 
  write, using the definition of the potential $v$ in \eqref{v-1d}:
  \begin{multline}
    \label{m2v12}
    \int_0^{\eps^{-1}L_\eps^{-1} b/2} v(x, 0) \left( m_{\eps,2}(x)
      \widetilde \eta_{\delta_\eps / L_\eps} (x) \right)' \d x \\
    = \int_0^{\eps^{-1} L_\eps^{-1} b/2} v_1(x) \left( m_{\eps,2}(x)
      \widetilde \eta_{\delta_\eps / L_\eps} (x) \right)' \d x +
    \int_0^{\eps^{-1} L_\eps^{-1} b/2} v_2(x) \left( m_{\eps,2}(x)
      \widetilde \eta_{\delta_\eps / L_\eps} (x) \right)' \d x.
  \end{multline}
  With the help of the definition of $v_1$ in \eqref{v1-1d}, we have
  for the first term in the right-hand side of \eqref{m2v12}:
  \begin{align}
    \int_0^{\eps^{-1} L_\eps^{-1} b/2} v_1(x) \left( m_{\eps,2}(x)
    \widetilde \eta_{\delta_\eps / L_\eps} (x) \right)' \d x =
    v_1(0) n_\eps + \int_{\delta_\eps / L_\eps}^{\eps^{-1} L_\eps^{-1}
    b/2}  v_1(x) m_{\eps,2}'(x) \d x \notag \\
    =  v_1(0) n_\eps + \int_{\delta_\eps / L_\eps}^1  v_1(x)
    m_{\eps,2}'(x) \d x + \int_1^{\eps^{-1}
    L_\eps^{-1} b/2}  v_1(x) m_{\eps,2}'(x) \d x \notag \\
    = (v_1(0) - v_1(1)) n_\eps + v_1(1) m_{\eps,2}(1) +
    \int_{\delta_\eps / L_\eps}^1 (v_1(x) - v_1(1)) m_{\eps,2}'(x) \d
    x \notag \\
    + \int_1^{\eps^{-1}  L_\eps^{-1} b/2}  v_1(x) m_{\eps,2}'(x) \d x
    \notag \\
    =  (v_1(0) - v_1(1)) n_\eps + v_1(1) +
    \int_{\delta_\eps / L_\eps}^1 (v_1(x) - v_1(1)) m_{\eps,2}'(x) \d
    x  \notag \\
    + \int_1^{\eps^{-1}  L_\eps^{-1} b/2}  v_1'(x) (1 - m_{\eps,2}(x))
    \d x, 
  \end{align}
  where in the last line we used integration by parts. Similarly, with
  the help of the definition of $v_2$ in \eqref{v2-1d} we have for the
  second term in the right-hand side of \eqref{m2v12}:
  \begin{align}
    \int_0^{\eps^{-1} L_\eps^{-1} b/2} v_2(x) \left( m_{\eps,2}(x)
    \widetilde \eta_{\delta_\eps / L_\eps} (x) \right)' \d x =
    v_2(1) m_{\eps,2}(1) + \int_1^{\eps^{-1} L_\eps^{-1}
    b/2}  v_2(x) m_{\eps,2}'(x) \d x \notag \\
    =  v_2(1) + \int_1^{\eps^{-1}  L_\eps^{-1} b/2}  v_2'(x) (1 -
    m_{\eps,2}(x)) \d x, 
  \end{align}
  again, using integration by parts. Combining the two formulas above
  yields
  \begin{multline}
    \int_0^{\eps^{-1}L_\eps^{-1} b/2} v(x, 0) \left( m_{\eps,2}(x)
      \widetilde \eta_{\delta_\eps / L_\eps} (x) \right)' \d x =
    v_1(1)
    + v_2(1) + (v_1(0) - v_1(1)) n_\eps \\
    + \int_{\delta_\eps / L_\eps}^1 (v_1(x) - v_1(1)) m_{\eps,2}'(x)
    \d x + \int_1^{\eps^{-1} L_\eps^{-1} b/2} (v_1'(x) + v_2'(x)) (1 -
    m_{\eps,2}(x)) \d x.
  \end{multline}
  Finally, recalling the precise definitions of $v_1$ and $v_2$, we
  obtain
  \begin{align}
    \label{vpotenergy}
    \int_0^{\eps^{-1}L_\eps^{-1} b/2} v(x, 0) \left( m_{\eps,2}(x)
      \widetilde \eta_{\delta_\eps / L_\eps} (x) \right)' \d x = -{1
    \over 2 \pi} \ln \left( {b \over 2 \eps L_\eps} \right) -
    {n_\eps^2 \over 2 \pi} \ln \left( {L_\eps \over \delta_\eps}
    \right) \notag \\
    + {n_\eps \over 2 \pi}  \int_{\delta_\eps / L_\eps}^1
    m_{\eps,2}'(x) \ln x
    \d x +  {1 \over 2 \pi}  \int_1^{\eps^{-1} L_\eps^{-1} b/2} {1 -
    m_{\eps,2}(x) \over x} \d x.
  \end{align}
  
  \medskip
  
  \noindent \emph{Step 3: Lower bound.} We now estimate the left-hand
  side of \eqref{vpotenergy}, using Young's inequality:
  \begin{align}\label{bubu}
    \int_0^{\eps^{-1}L_\eps^{-1} b/2} v(x, 0) \left( m_{\eps,2}(x)
    \widetilde \eta_{\delta_\eps / L_\eps} (x) \right)' \d x \leq -{1
    \over 2 \pi} \ln \left( {b \over 2 \eps L_\eps} \right) -
    {n_\eps^2 \over 2 \pi} \ln \left( {L_\eps \over \delta_\eps}
    \right) \notag \\
    + {1 \over 4 \pi} \int_{\delta_\eps / L_\eps}^1 \left( \ln^2
    x + |m_{2,\eps}'(x)|^2 \right) \d x
    + {1 \over 2 \pi} \int_1^{\eps^{-1} L_\eps^{-1} b/2}
    (1 - m_{\eps,2}(x)) \d x \notag \\
    \leq -{1
    \over 2 \pi} \ln \left( {b \over 2 \eps L_\eps} \right) -
    {n_\eps^2 \over 2 \pi} \ln \left( {L_\eps \over \delta_\eps}
    \right) + C \left( 1 +
    F_{\eps,1d}^{MM}(m_\eps) \right),
  \end{align}
  for some $C > 0$ independent of $\eps \ll 1$. Thus, according to
  Lemma \ref{l:FepsSlb} and \eqref{gradest}, we have
  \begin{align}
    F_{\eps,1d}^S(m_\eps) \geq -{\lambda |\ln \eps| \over 2 \pi \ln
    |\ln \eps|} + {\lambda \over 2 \pi \ln |\ln \eps|} \ln \left( {b
    \over 2 \eps L_\eps} \right) + {\lambda n_\eps^2 \over 2 \pi \ln
    |\ln \eps|} \ln \left( {L_\eps \over \delta_\eps} \right)  \notag
    \\
    - {C \over \ln |\ln \eps|} \left( 1
    + F_{\eps,1d}^{MM}(m_\eps) \right),
  \end{align}
  again, for some $C > 0$ independent of $\eps \ll 1$. Recalling
  \eqref{lamdeleps} and \eqref{Leps}, this translates into
  \begin{align}
    \label{Feps1dSlb}
    F_{\eps,1d}^S(m_\eps) \geq - {\lambda \over 2 \pi} + {\lambda
    n_\eps^2 \over \pi} - {C \over \ln |\ln \eps|} \left(
    1 + F_{\eps,1d}^{MM}(m_\eps)
    \right).  
  \end{align}
  Therefore, for any $\alpha \in (0, \frac12)$ and all $\eps$ small
  enough we can write
  \begin{align}
    \label{Fepslbal}
    F_{\eps,1d}(m_\eps) = F_{\eps,1d}^{MM}(m_\eps) +
    F_{\eps,1d}^S(m_\eps) \geq (1 - \alpha) \left[ F_{\eps,1d}^{MM}(m_\eps)
    + {\lambda \over 2 \pi}  (2 n_\eps^2 - 1) \right] - C \alpha,
  \end{align}
  for some $C > 0$ independent of $\eps$ and $\alpha$.

  Now, applying Lemma \ref{l:MM} we arrive at
  \begin{align}
    \label{FepsMMlb2}
    F_{\eps,1d}(m_\eps) \geq 2 (1 - \alpha) F_0(n_\eps) - 8
    \sqrt{\beta} \left( 1- \sqrt{1 + m_{2,\eps}(R) \over 2} \right) -
    C \alpha,
  \end{align}
  for any $R \in (0, \eps^{-1} L_\eps^{-1} b/2]$ and $C > 0$
  independent of $\eps \ll 1$, $\alpha$ and $R$. At the same time,
  using Lemma \ref{l:upe2} and \eqref{FepsMMlb2} we obtain
  \begin{align}
    \beta \int_0^{\eps^{-1} L_\eps^{-1} b} (1 - m_{2,\eps}) \d x \leq
    F_{\eps,1d}^{MM}(m_\eps) \leq C,
  \end{align}
  for some $C > 0$ and all $\eps \ll 1$. Therefore, there exists
  $R_\eps \in [\eps^{-1} L_\eps^{-1} b/4, \eps^{-1} L_\eps^{-1} b/2]$
  such that, choosing $R = R_\eps$ we have
  $m_{\eps,2}(R) \geq 1 - 4 C \eps L_\eps / (\beta b)$, so that
  the next-to-last term in \eqref{FepsMMlb2} can be absorbed into the
  last term. Thus, we have
  \begin{align}
    F_{\eps,1d}(m_\eps) \geq 2 (1 - \alpha) \min_{n \in [0,1]} F_0(n) -
    C \alpha,
  \end{align}
  for some $C > 0$ independent of $\alpha$ and $\eps$, for all $\eps$
  small enough, and the liminf inequality follows by sending
  $\alpha \to 0$.
  
  \medskip
  
  \noindent \emph{Step 4: Upper bound.}  Finally, we derive an
  asymptotically matching upper bound for the energy. We use the
  truncated optimal Modica-Mortola profile at the edges as a test
  configuration. More precisely, for $K > 1$ and $\eps$ sufficiently
  small, we define $m \in \mathcal A_\eps^{1d}$ satisfying
  \eqref{mthcossin} with
  $\theta(x) = \bar \theta(\min (x, b / (\eps L_\eps) - x))$, where
  \begin{align}
    \label{MMtrunc}
    \bar\theta(x) := 
    \begin{cases}
      \theta_0, & 0 \leq x \leq {\delta_\eps \over L_\eps}, \\
      4 \arctan \left( e^{-2 \sqrt{\beta} \left( x - {\delta_\eps
              \over L_\eps} \right)} \tan {\theta_0 \over 4} \right),
      & {\delta_\eps \over L_\eps} \leq
      x \leq K +  {\delta_\eps \over L_\eps}, \\
      4 \arctan \left( e^{-2 K \sqrt{\beta}} \tan {\theta_0 \over 4}
      \right) \left[ 1 - \eta \left( {x \over K} - 1 - {\delta_\eps
            \over K L_\eps} \right) \right], & K + {\delta_\eps \over
        L_\eps}
      \leq x \leq 2 K + {\delta_\eps \over L_\eps}, \\
      0, & 2 K + {\delta_\eps \over L_\eps}\leq x \leq {b \over 2 \eps
        L_\eps},
    \end{cases}
  \end{align}
  and $\theta_0 = \arccos n_0$, where $n_0$ is the unique minimizer of
  $F_0(n)$ in Lemma \ref{l:F0}. By the argument leading to the case of
  equality in \eqref{E1dthMM}, we obtain
  \begin{align}
    \label{F1dMMtest}
    F_{\eps,1d}^{MM}(m)
    & = 8 \sqrt{\beta}\left( 1 - \sqrt{1 + n_0 \over
      2} \right) - 8 \sqrt{\beta}\left( 1 - \sqrt{1 + \cos \left(
      \bar\theta \big( K +
      {\delta_\eps \over L_\eps} \big) \right) \over
      2} \right) \notag \\
    & + 2 \beta (1 - n_0) {\delta_\eps \over L_\eps} + \int_{\delta_\eps
      \over L_\eps}^{K + {\delta_\eps \over L_\eps}} \left( |\bar
      \theta'|^2 + 2 \beta (1 - \cos \bar \theta) \right) \d x.
  \end{align}
  Thus, we have
  \begin{align}
    \label{F1dMMtest2}
    F_{\eps,1d}^{MM}(m) \leq 8 \sqrt{\beta}\left( 1 - \sqrt{1 + n_0 \over
    2} \right) + {C \ln |\ln \eps| \over |\ln \eps|^2} + C K e^{-4 K
    \sqrt{\beta}},
  \end{align}
  for some $C > 0$ independent of $\eps$ and $K$ and all $\eps$
  sufficiently small.

  Turning now to the stray field energy, with the help of Lemma
  \ref{l:Str} we can write
  \begin{align}
    F_{\eps,1d}^S(m)
    & =  - {\lambda |\ln \eps| \over 2 \pi \ln |\ln
      \eps|} \notag \\
    & + {\lambda \over 4 \pi \ln |\ln \eps|} \int_0^{b
      \over \eps L_\eps} \int_0^{b \over \eps L_\eps} \ln {|x -
      y|^{-1} \over \eps L_\eps }
      \left( m_2(x) \widetilde \eta_{\delta_\eps / L_\eps}(x) \right)'
      \left( m_2(y) \widetilde \eta_{\delta_\eps / L_\eps}(y) \right)'
      \d x \d y,
  \end{align}
  where we took into account that inserting a constant factor to the
  argument of the logarithm does not change the stray field energy.
  With the help of the definition of $m$, this is equivalent to
  \begin{align}
    \label{FepsSlnln}
    F_{\eps,1d}^S(m)
    & =  - {\lambda |\ln \eps| \over 2 \pi \ln |\ln
      \eps|} \notag \\
    & + {\lambda \over 2 \pi \ln |\ln \eps|} \int_0^{2 K +
      {\delta_\eps \over L_\eps}} \int_0^{2 K +
      {\delta_\eps \over L_\eps}} \ln {|x
      - y|^{-1} \over \eps L_\eps }
      \left( m_2(x) \widetilde \eta_{\delta_\eps / L_\eps}(x) \right)'
      \left( m_2(y) \widetilde \eta_{\delta_\eps / L_\eps}(y) \right)'
      \d x \d y \notag \\
    & +  {\lambda \over 2 \pi \ln |\ln \eps|} \int_0^{2 K +
      {\delta_\eps \over L_\eps}} \int_{{b \over \eps L_\eps} - 2 K -
      {\delta_\eps \over L_\eps}}^{b \over \eps L_\eps} \ln {|x -
      y|^{-1} \over \eps L_\eps }
      \left( m_2(x) \widetilde \eta_{\delta_\eps / L_\eps}(x) \right)'
      \left( m_2(y) \widetilde \eta_{\delta_\eps / L_\eps}(y) \right)'
      \d x \d y.
  \end{align}
  Observe that the integral in the last line of \eqref{FepsSlnln} is
  bounded above by a constant independent of $\eps$ and $K$ for all
  $\eps$ sufficiently small. Therefore, we now concentrate on
  estimating the remaining terms in \eqref{FepsSlnln}.

  We can write the integral in the second line  in \eqref{FepsSlnln}
  as follows:
  \begin{align}
    \int_0^{2 K + {\delta_\eps \over L_\eps}}
    & \int_0^{2 K +
      {\delta_\eps \over L_\eps}}
      \ln {|x - y|^{-1} \over \eps L_\eps
      } \left( m_2(x) \widetilde \eta_{\delta_\eps / L_\eps}(x) \right)'
      \left( m_2(y) \widetilde \eta_{\delta_\eps / L_\eps}(y) \right)'
      \d x \d y \notag \\
    & = n_0^2 \int_0^{\delta_\eps \over L_\eps} \int_0^{\delta_\eps
      \over L_\eps} \ln {|x - y|^{-1} \over \eps L_\eps } 
      \widetilde \eta_{\delta_\eps / L_\eps}'(x)
      \widetilde \eta_{\delta_\eps / L_\eps}'(y) \d x \d y \notag \\
    & + 2 n_0 \int_{\delta_\eps \over
      L_\eps}^{2 K + {\delta_\eps \over L_\eps}} \int_0^{\delta_\eps
      \over L_\eps} \ln {|x - y|^{-1} 
      \over \eps L_\eps } \widetilde \eta_{\delta_\eps / L_\eps}'(x)
      m_2'(y) \d x \d y \notag \\
    & + \int_{\delta_\eps \over
      L_\eps}^{2 K + {\delta_\eps \over L_\eps}} \int_{\delta_\eps \over
      L_\eps}^{2 K + {\delta_\eps \over L_\eps}} \ln {|x - y|^{-1}
      \over \eps L_\eps } m_2'(x) m_2'(y) \d x \d y =: I_1 + I_2 + I_3.
  \end{align}
  For the first integral, we have
  \begin{align}
    I_1 = n_0^2 \ln {1 \over \eps \delta_\eps} + n_0^2 \int_0^1
    \int_0^1 \ln |x - y|^{-1} \eta'(x)  \eta'(y) \d x \d y \leq n_0^2
    \ln {1 \over \eps \delta_\eps} + C,
  \end{align}
  for some $C > 0$ independent of $\eps$ and $K$ and all $\eps$
  sufficiently small. At the same time, noting that
  $m_2'(x + \delta_\eps L_\eps^{-1}) \geq 0$ for all $0 < x < 2 K$, we
  get
  \begin{align}
    I_2 \leq 2 n_0 (1 - n_0) \ln {1 \over \eps L_\eps} + 2 n_0
    \int_0^{2 K} \ln |y|^{-1} m_2'(y + \delta_\eps L_\eps^{-1}) \d y
    \leq 2 n_0 (1 - n_0) \ln {1 \over \eps L_\eps} + C, 
  \end{align}
  again, for some $C > 0$ independent of $\eps$ and $K$ and all $\eps$
  sufficiently small. Finally, for the third integral we have
  \begin{align}
    I_3 = (1 - n_0)^2 \ln {1 \over \eps L_\eps} + \int_{\delta_\eps \over
    L_\eps}^{2 K + {\delta_\eps \over L_\eps}} \int_{\delta_\eps \over
    L_\eps}^{2 K + {\delta_\eps \over L_\eps}} \ln |x - y|^{-1}
    m_2'(x) m_2'(y) \d x \d y \notag \\
    \leq (1 - n_0)^2 \ln {1 \over \eps L_\eps} + C K,
  \end{align}
  once again, for some $C > 0$ independent of $\eps$ and $K$ and all
  $\eps$ sufficiently small.

  We now put all the obtained estimates together:
  \begin{align}
    \label{I123up}
    I_1 + I_2 + I_3 \leq \ln {1 \over \eps L_\eps} + n_0^2 \ln {L_\eps
    \over \delta_\eps} + C K.
  \end{align}
  Then, recalling the definitions in \eqref{lamdeleps} and
  \eqref{Leps} and combining the estimate in \eqref{I123up} with the
  one in \eqref{F1dMMtest2}, we arrive at
  \begin{align}
    F_{\eps,1d}(m) \leq 8 \sqrt{\beta}\left( 1 - \sqrt{1 + n_0 \over
    2} \right) + {\lambda \over 2 \pi} (2 n_0^2 - 1) + {C \ln |\ln
    \eps| \over |\ln \eps|^2} + C K e^{-4 K \sqrt{\beta}} + {C K \over
    \ln |\ln \eps|}, 
  \end{align}
  for some $C > 0$ independent of $\eps$ and $K$ and all $\eps$
  sufficiently small. Taking the limsup as $\eps \to 0$, therefore,
  yields
  \begin{align}
    \limsup_{\eps \to 0} F_{\eps,1d}(m) \leq 2 F_0(n_0) +  C K e^{-4 K
    \sqrt{\beta}}. 
  \end{align}
  Finally, the result follows by sending $K \to \infty$.
\end{proof}

\begin{proof}[Proof of Theorem \ref{t:1dmin}]
  As in the proof of Theorem \ref{t:E1deps}, we consider minimizers
  $m_\eps$ of $F_{\eps,1d}$ and write $F_{\eps,1d}^{MM}(m_\eps)$ in
  the form
  \begin{align}
    \label{Feps1dMMth}
    F_{\eps,1d}^{MM}(m_\eps) = \int_0^{\eps^{-1} L_\eps^{-1}
    b} \left( \frac12 |\theta_\eps'|^2 + \beta (1 - \cos \theta_\eps)
    \right) \d x.
  \end{align}
  Also, without loss of generality we may assume that
  $\theta_\eps \geq 0$. Then with the help of the estimate in
  \eqref{Feps1dSlb} we can write
  \begin{align}
    \label{Feps1dMM12}
    F_{\eps,1d}(m_\eps) \geq \frac12 F_{\eps,1d}^{MM}(m_\eps) - C,
  \end{align}
  for some $C > 0$ independent of $\eps \ll 1$. On the other hand, by
  \eqref{F0infsup1d} we know that the left-hand side of
  \eqref{Feps1dMM12} is bounded independently of $\eps \ll 1$, which,
  in turn, implies that
  \begin{align}
    \label{thepsub}
    \| \theta_\eps' \|_{L^2(0, \eps^{-1} L_\eps^{-1} b)} \leq C.     
  \end{align}

  Now, pick a sequence of $\eps_k \to 0$ as $k \to \infty$. Then, up
  to a subsequence (not relabeled) we have
  $\theta_{\eps_k} \rightharpoonup \bar\theta$ in
  $H^1_{loc}(\overline{\R}^+)$ and locally uniformly by the estimate
  in \eqref{thepsub}. At the same time, using \eqref{Fepslbal} and the
  Modica-Mortola trick \cite{modica87}, we have
  \begin{multline}
    \label{Feps1dthlbMM}
    F_{\eps_k,1d}(m_{\eps_k}) \geq (1 - \alpha) \Bigg[ 4 \sqrt{\beta}
    \int_0^{\eps_k^{-1} L_{\eps_k}^{-1} b/2} \sin \left(
      {\theta_{\eps_k} \over 2}
    \right) |\theta_{\eps_k}'| \d x \\
    + \int_0^{\eps_k^{-1} L_{\eps_k}^{-1} b/2} \left[ |\theta_{\eps_k}'|
      - 2 \sqrt{\beta} \sin \left( {\theta_{\eps_k} \over 2} \right)
    \right]^2 \d x + {\lambda \over 2 \pi} (2 n_{\eps_k}^2 - 1) \Bigg]
    - C \alpha,
  \end{multline}
  for some $C > 0$ and any $\alpha \in (0,\frac12)$, for all $k$
  sufficiently large. Here we used the reflection symmetry of the
  minimizers and defined
  $n_{\eps_k} := m_{\eps_k,2}(\delta_{\eps_k} / L_{\eps_k})$. As in
  the proof of Theorem \ref{t:E1deps}, we can find
  $R_k \in (\alpha^{-3}, 2 \alpha^{-3})$ such that
  $\theta_{\eps_k}(R_k) < \alpha$ for all $\alpha$ sufficiently
  small. Then from \eqref{Feps1dthlbMM} we obtain
  \begin{align}
    \label{Feps1dthlbMM2}
    F_{\eps_k,1d}(m_{\eps_k})
    & \geq (1 - \alpha) \Bigg[ -4 \sqrt{\beta}
      \int_0 ^{R_k} \sin \left(
      {\theta_{\eps_k} \over 2} \right) \theta_{\eps_k}' \d x + 4
      \sqrt{\beta} \int_{R_k}^{\eps_k^{-1} L_{\eps_k}^{-1} b/2} \sin
      \left( {\theta_{\eps_k} \over 2}
      \right) |\theta_{\eps_k}'| \d x \notag \\
    & \qquad + \int_0^{R_k} \left[
      \theta_{\eps_k}' 
      + 2 \sqrt{\beta} \sin \left( {\theta_{\eps_k} \over 2} \right)
      \right]^2 \d x + {\lambda \over 2 \pi} (2 n_{\eps_k}^2 - 1) \Bigg]
      - C \alpha \notag \\
    & = (1 - \alpha) \Bigg[ 2 F_0[n_{\eps_k}] - 8 \sqrt{\beta} \left(
      1 + \sqrt{1 + \cos 
      \theta_{\eps_k}(R_k) \over 2} \right) \notag \\
    & \qquad + 8 \sqrt{\beta} \left(
      1 + \sqrt{1 + \cos 
      \theta_{\eps_k}(0) \over 2} \right) - 8 \sqrt{\beta} \left(
      1 + \sqrt{1 + \cos 
      \theta_{\eps_k}(\delta_{\eps_k} / L_{\eps_k}) \over 2} \right)
      \notag \\ 
    & \qquad + 4 \sqrt{\beta}
      \int_{R_k}^{\eps_k^{-1} L_{\eps_k}^{-1} b/2} \sin \left(
      {\theta_{\eps_k} \over 2}
      \right) |\theta_{\eps_k}'| \d x \notag \\
    & \qquad + \int_0^{R_k} \left[
      \theta_{\eps_k}'  
      + 2 \sqrt{\beta} \sin \left( {\theta_{\eps_k} \over 2} \right)
      \right]^2 \d x \Bigg] - C \alpha.
  \end{align}
  In view of the definition of $R_k$ the term involving
  $\cos \theta_{\eps_k}(R_k)$ in \eqref{Feps1dthlbMM2} may be absorbed
  into the last term for all $\alpha$ sufficiently small. Similarly,
  by \eqref{thepsub} and Sobolev embedding the second line in the
  right-hand side of \eqref{Feps1dthlbMM2} may be bounded by
  $(\delta_{\eps_k} / L_{\eps_k})^{1/2}$ and, hence, absorbed into the
  last term as well for all $k$ sufficiently large depending on
  $\alpha$. Thus, taking into account that
  $F_{\eps_k,1d}(m_{\eps_k}) \to 2 F_0(n_0)$ as $k \to \infty$, we
  obtain for all $k$ large enough
  \begin{align}
    \label{Feps1dthlbMM3}
    (1 - \alpha)^{-1} F_0(n_0) + C \alpha 
    & \geq 
      F_0(n_{\eps_k}) + 2 \sqrt{\beta} 
      \int_{R_k}^{\eps_k^{-1} L_{\eps_k}^{-1} b/2} \sin \left(
      {\theta_{\eps_k} \over 2}
      \right) |\theta_{\eps_k}'| \d x \notag \\
    & \qquad + \frac12 \int_0^{R_k}
      \left[ \theta_{\eps_k}'  
      + 2 \sqrt{\beta} \sin \left( {\theta_{\eps_k} \over 2} \right)
      \right]^2 \d x.
  \end{align}

  We now observe that by minimality of $F_0(n_0)$ both integrals in
  the right-hand side of \eqref{Feps1dthlbMM3} are bounded above by
  $C \alpha$, for some $C > 0$ independent of $\alpha$ and $k$. In
  particular, this implies that the total variation of
  $\cos (\theta_{\eps_k}/2)$ on
  $(R_k, \frac12 \eps_k^{-1} L_{\eps_k}^{-1} b)$ is bounded by
  $C \alpha$, and in view of the fact that
  $\theta_{\eps_k}(R_k) < \alpha$ we conclude that
  $\theta_{\eps_k}(x) < C \alpha$ for all
  $x \in [2 \alpha^{-3}, \frac12 \eps_k^{-1} L_{\eps_k}^{-1} b]$ for
  some $C > 0$ independent of $\alpha$ and $k$ for all $k$
  sufficiently large. On the other hand, sending $\alpha \to 0$ on a
  sequence and extracting a further subsequence (not relabeled), we
  conclude that
  \begin{align}
    \label{thlimL2}
    \theta_{\eps_k}' + 2 \sqrt{\beta} \sin \left( {\theta_{\eps_k}
    \over 2} \right) \to 0 \ \text{in} \ L^2_{loc}(\overline{\R^+}),
  \end{align}
  as $k \to \infty$. Testing the left-hand side of \eqref{thlimL2}
  against $\phi \in C^\infty_c(\R^+)$ and passing to the limit, we
  then conclude that $\bar \theta$ satisfies
  \begin{align}
    \label{barthweak}
    {d \bar \theta \over dx} + 2 \sqrt{\beta} \sin \left( {\bar\theta
    \over 2} \right) = 0 \ \text{in} \ \mathcal D'(\R^+). 
  \end{align}
  In particular, since $\bar\theta \in C(\overline{\R^+})$, we have
  that $\bar\theta(x)$ also satisfies \eqref{barthweak} classically
  for all $x > 0$. Finally, by strict convexity of $F_0$ we can also
  conclude that $n_{\eps_k} \to n_0$ as $k \to \infty$. Therefore, we
  have
  \begin{align}
    \arccos n_0 = \lim_{k \to \infty} \arccos n_{\eps_k} = \lim_{k \to \infty} 
    \theta_{\eps_k}(\delta_{\eps_k} / L_{\eps_k}) 
    = \bar\theta(0).     
  \end{align}
  Thus, $\bar\theta = \theta_\infty$, where the latter is given by
  \eqref{thinf} with $\theta_0 = \arccos n_0$. Combining this with the
  uniform closeness of $\theta_{k_\eps}(x)$ to zero far from $x = 0$
  and the asymptotic decay of $\theta_\infty(x)$ for large $x> 0$ then
  yields uniform convergence of $\theta_{k_\eps}$ to $\theta_\infty$
  on $[0, \frac12 \eps_k^{-1} L_{\eps_k}^{-1} b]$. From
  \eqref{thlimL2} we conclude that this convergence is also strong in
  $H^1_{loc}(\overline{\R^+})$. Finally, in view of the uniqueness of
  $\bar\theta$ the limit does not depend on the choice of the
  subsequence and, hence, is a full limit.
\end{proof}

\section{Proof of Theorem \ref{t:rect}}
\label{sec:rect}

The proof follows closely the arguments in
Sec. \ref{sec:proof-theor-23}, except that we can no longer reduce the
problem to studying a one-dimensional profile due to lack of
translational symmetry in the $x_1$-direction. Therefore, we need to
incorporate the relevant corrections to the upper and lower bounds in
the proof of Theorem \ref{t:E1deps} and show that they are indeed
negligible in comparison with the limit energy $F_0$.

As in Sec. \ref{sec:proof-theor-23}, for
$\widetilde D_\eps := \eps^{-1} L_\eps^{-1} D$ and
$m \in \mathcal A_\eps$, where
\begin{align}
  \label{Aeps}
  \mathcal A_\eps := H^1(\widetilde D_\eps ; \mathbb S^1),
\end{align}
we introduce
\begin{align}
  \label{FepsDeps}
  F_\eps(m)
  & := \frac12 \int_{\widetilde D_\eps \cap \{ |m_2| < 1 \} }  {
    |\nabla m_2|^2 \over 1 - m_2^2} \d x + \beta
    \int_{\widetilde D_\eps} ( 1 - m_2) \d x 
    - {\lambda a \over 2 \pi \eps} \notag \\
  & \qquad\qquad + {\lambda \over 8 \pi \ln |\ln \eps|}
    \int_{\widetilde D_\eps} 
    \int_{\widetilde D_\eps} {\nabla \cdot \widetilde m_{\delta_\eps /
    L_\eps}(x) \nabla \cdot \widetilde m_{\delta_\eps / L_\eps}(y)
    \over |x - y|} \d x \d y,   
\end{align}
where
$\widetilde m_{\delta_\eps / L_\eps}(x) := m(x) \widetilde
\eta_{\delta_\eps / L_\eps} (x)$, with
$\widetilde \eta_{\delta_\eps / L_\eps} (x) := \eta( \text{dist}(x,
\partial \widetilde D_\eps) L_\eps / \delta_\eps)$. Then, for
$m \in H^1(D; \mathbb S^1)$ the connection between $F_\eps$ and the
original energy $E_\eps$ is given by
\begin{align}
  \label{FepsEeps}
  E_\eps(m) = {\lambda a \over 2 \pi} + \eps F_\eps(m(\cdot / (\eps
  L_\eps)),
\end{align}
which follows by a straightforward rescaling and applying the weak
chain rule \cite[Theorem 6.16]{lieb-loss} to the identity $|m|^2 =
1$. Therefore, the first statement of Theorem \ref{t:rect} is
equivalent to
\begin{align}
  \label{Fepsmin}
  {\eps |\ln \eps| \over \ln |\ln \eps|} \min_{m \in \mathcal A_\eps}
  F_\eps(m) \to 2 a \min_{n \in [0,1]} F_0(n) \quad \text{as} \quad
  \eps \to 0.  
\end{align}

As in the proof of Theorem \ref{t:E1deps}, we split the rescaled
energy into the local and the non-local parts
\begin{align}
  \label{FepsrectMMS}
   F_\eps(m) = F_\eps^{MM}(m) + F_\eps^S(m), 
\end{align}
where
\begin{align}
  \label{FepsMM2d}
  F_\eps^{MM}(m) := \frac12 \int_{\widetilde D_\eps \cap \{ |m_2| < 1
  \} } { |\nabla m_2|^2 \over 1 - m_2^2} \d x + \beta \int_{\widetilde
  D_\eps}   ( 1 - m_2) \d x,  
\end{align}
and 
\begin{align}
  \label{FepsS2d}
  F_\eps^S(m) :=  {\lambda \over 8 \pi \ln |\ln \eps|}
    \int_{\widetilde D_\eps} 
    \int_{\widetilde D_\eps} {\nabla \cdot \widetilde m_{\delta_\eps /
    L_\eps}(x) \nabla \cdot \widetilde m_{\delta_\eps / L_\eps}(y)
    \over |x - y|} \d x \d y - {\lambda a \over 2 \pi \eps}.
\end{align}

We begin by stating an analog of Lemma \ref{l:MM} in the case of a
rectangular domain.

\begin{lemma}
  \label{l:MM2d}
  Let $m = (m_1, m_2) \in \mathcal A_\eps$ and let
  $\overline m = (\overline m_1, \overline m_2)$ be defined as
  \begin{align}
    \label{mbar2d}
    \overline m_2(x_1, x_2) := {\eps L_\eps \over a} \int_0^{\eps^{-1}
    L_\eps^{-1} a} m_2(t, x_2) \, \d t, \qquad \overline m_1(x_1, x_2)
    := \sqrt{1 - \overline m_2^2(x_1, x_2)}.
  \end{align}
  Then $\overline m \in \mathcal A_\eps \cap C(\widetilde D_\eps)$,
  and for every $R \in (0, \eps^{-1} L_\eps^{-1} b/2]$ and every
  $r \in (0, R)$ there holds
  \begin{align}
    \label{FepsMMlb2d}
    {\eps L_\eps F_\eps^{MM}(m) \over a} \geq 4 \sqrt{\beta} \left. \left( 1
    - \sqrt{1 + \overline m_2 \over 2} \right) \right|_{x_2 = r}  + 4
    \sqrt{\beta} \left. \left( 1 - 
    \sqrt{1 + \overline m_2 \over 2}
    \right) \right|_{x_2 = \eps^{-1} L_\eps^{-1} b - r} \notag \\
    - 8 \sqrt{\beta} \left. \left( 1 - \sqrt{1 + \overline m_2
    \over 2} \right) \right|_{x_2 = R}.
  \end{align}
\end{lemma}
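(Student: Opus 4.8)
The plan is to bound $F_\eps^{MM}(m)$ from below by the one-dimensional Modica--Mortola energy evaluated on the $x_1$-average $\overline m$, and then to invoke the one-dimensional estimate of Lemma~\ref{l:MM}. To begin, one checks, exactly as in the proof of Lemma~\ref{l:madmiss} with $D$ replaced by $\widetilde D_\eps$, that $\overline m\in\mathcal A_\eps\cap C(\widetilde D_\eps)$: the function $\overline m_2$ depends on $x_2$ alone and, being an $x_1$-average of $m_2\in H^1(\widetilde D_\eps)$, it belongs to $H^1\big((0,\eps^{-1}L_\eps^{-1}b)\big)$ with $|\overline m_2|\le 1$ (Cauchy--Schwarz gives $\|\overline m_2'\|_{L^2}^2\le\tfrac{\eps L_\eps}{a}\|\partial_2 m_2\|_{L^2(\widetilde D_\eps)}^2$), hence it is continuous; then $\overline m_1=\sqrt{1-\overline m_2^2}$ is continuous, and the smooth-approximation argument of Lemma~\ref{l:madmiss} shows $\overline m_1\in H^1$, so $\overline m\in H^1(\widetilde D_\eps;\mathbb S^1)\cap C(\widetilde D_\eps)$ and $\overline m$ may be regarded as an element of $\mathcal A_\eps^{1d}$, i.e.\ a function of $x_2$.

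The core step is the inequality
\begin{align*}
  \frac{\eps L_\eps}{a}\,F_\eps^{MM}(m)\ \ge\ F_{\eps,1d}^{MM}(\overline m),
\end{align*}
with $F_{\eps,1d}^{MM}$ as in \eqref{FepsMM}. Since $|m|=1$, the chain-rule identity \eqref{gradmm2} gives $F_\eps^{MM}(m)=\tfrac12\int_{\widetilde D_\eps}|\nabla m|^2\,dx+\beta\int_{\widetilde D_\eps}(1-m_2)\,dx$. For the exchange term, the argument of Lemma~\ref{l:madmiss}, applied verbatim on $\widetilde D_\eps$, yields $\int_{\widetilde D_\eps}|\nabla m|^2\,dx\ge\int_{\widetilde D_\eps}|\nabla\overline m|^2\,dx$: one discards the nonnegative term $|\partial_1 m_2|^2/(1+\eps-m_2^2)$, uses convexity of $F_\eps(u,v)=v^2/(1+\eps-u^2)$ together with Fubini — which annihilates the two linear correction terms because $\int_0^{\eps^{-1}L_\eps^{-1}a}(m_2-\overline m_2)\,dx_1=0$ — and lets $\eps\to0$ by monotone convergence, approximating $m$ by smooth $\mathbb S^1$-valued maps to handle the set $\{|m_2|=1\}$. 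For the Zeeman term, Fubini and the definition of $\overline m_2$ give the exact identity $\int_{\widetilde D_\eps}(1-m_2)\,dx=\tfrac{a}{\eps L_\eps}\int_0^{\eps^{-1}L_\eps^{-1}b}(1-\overline m_2)\,dx_2$. As $\overline m$ depends only on $x_2$, the sum of these two contributions is precisely $\tfrac{a}{\eps L_\eps}F_{\eps,1d}^{MM}(\overline m)$, which is the displayed inequality.

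It remains to apply the one-dimensional Modica--Mortola estimate of Lemma~\ref{l:MM} to $\overline m$; the proof of that lemma uses only $|\overline m_2|\le 1$ together with the splitting of $(0,\eps^{-1}L_\eps^{-1}b)$ into the three subintervals $(r,R)$, $(R,\eps^{-1}L_\eps^{-1}b-R)$ and $(\eps^{-1}L_\eps^{-1}b-R,\eps^{-1}L_\eps^{-1}b-r)$, on each of which the Modica--Mortola trick bounds the energy below by $4\sqrt\beta$ times the jump of $\sqrt{(1+\overline m_2)/2}$ between the endpoints. These three jump contributions telescope, the middle interval supplying the second copy of $-4\sqrt\beta\big(1-\sqrt{(1+\overline m_2(R))/2}\big)$, and together with the displayed inequality this yields \eqref{FepsMMlb2d}; the endpoint traces of $\overline m_2$ are well defined by its continuity.

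I expect the only substantial work to be the reduction in the second paragraph — checking that the convexity-and-approximation machinery of Lemma~\ref{l:madmiss} goes through on the rectangle $\widetilde D_\eps$ in the presence of the constraint $\{|m_2|<1\}$. The remaining ingredients, the Fubini identity for the Zeeman term and the telescoped one-dimensional Modica--Mortola bound, are direct transcriptions of the one-dimensional arguments, and the non-local part of the energy plays no role at this stage.
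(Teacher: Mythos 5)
Your proposal is correct and follows essentially the same route as the paper's proof: reduce to the $x_1$-average $\overline m$ by the argument of Lemma~\ref{l:madmiss} (with the Zeeman term exact under averaging), then run the Modica--Mortola/total-variation argument on the resulting one-dimensional profile and read off the boundary values at $r$, $R$ and $\eps^{-1}L_\eps^{-1}b-r$. The only cosmetic differences are that the paper splits $(r,\eps^{-1}L_\eps^{-1}b-r)$ into two rather than three subintervals and justifies the chain rule for $\sqrt{1+\overline m_2}$ via the explicit truncation $\overline m_2^\delta=\max(-1+\delta,\overline m_2)$ where $\overline m_2$ may touch $-1$ --- a detail you should make explicit, since, as you correctly note, Lemma~\ref{l:MM} as stated assumes $0\le m_2<1$ and cannot be cited verbatim.
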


\begin{proof}
  Since $m \in H^1(\widetilde D_\eps; \mathbb S^1)$, its trace on
  $\widetilde D_\eps \cap \{x_2 = t\}$ is well-defined for every
  $t \in (0, \eps^{-1} L_\eps^{-1} b)$. Arguing by approximation, we
  have
  $\overline m_2 \in H^1(\widetilde D_\eps) \cap C(\widetilde
  D_\eps)$, in view of the one-dimensional character of
  $\overline m_2$. Furthermore, arguing exactly as in the proof of
  Lemma \ref{l:madmiss}, we also obtain that
  $\overline m \in \mathcal A_\eps \cap C(\widetilde D_\eps)$ and
  \begin{align}
    \label{mmbar2d}
    \int_{\widetilde D_\eps} |\nabla  m|^2 \d x \geq
    \int_{\widetilde D_\eps} |\nabla \overline m|^2 \d x =
    \int_{\widetilde D_\eps \cap \{ |\overline m_2| < 1\} } {|\nabla
    \overline m_2|^2 \over 1 - \overline m_2^2} \d x.
  \end{align}
  In particular, since $\overline m$ is independent of $x_1$, it may
  be chosen to be continuous in $\widetilde D_\eps$.

  By \eqref{mmbar2d} we have
  \begin{align}
    \label{FepsMM2da}
    F_\eps^{MM}(m) \geq \frac12 \int_{\widetilde D_\eps \cap \{
    |\overline m_2| < 1\} } {|\nabla \overline m_2|^2 \over 1 -
    \overline m_2^2} \d x + \beta  \int_{\widetilde D_\eps} (1 -
    \overline m_2) \d x.
  \end{align}
  Therefore, using the Modica-Mortola trick \cite{modica87}, for every
  $\delta \in (0,1)$ we obtain
  \begin{align}
    F_\eps^{MM}(m) \geq \frac12 \int_{\widetilde D_\eps \cap \{
    |\overline m_2| < 1\} } {|\nabla \overline m_2|^2 \over 1 -
    \overline m_2^2} \d x + \beta  \int_{\widetilde D_\eps \cap \{
    |\overline m_2| < 1\}} (1 -
    \overline m_2) \d x \notag \\
    \geq \sqrt{2 \beta} \int_{\widetilde D_\eps \cap \{
    \overline m_2 > -1 + \delta \} } {|\nabla \overline m_2| \over \sqrt{1 +
    \overline m_2}} \, \d x = \sqrt{8 \beta} \int_{\widetilde D_\eps}
    \left| \nabla \left( \sqrt{1 + \overline m_2^\delta} - \sqrt{2}
    \right) \right| \, \d x,
  \end{align}
  where
  $\overline m_2^\delta := \max(-1 + \delta, \overline m_2) \in
  H^1(\widetilde D_\eps)$ and we used weak chain rule \cite[Theorem
  6.16]{lieb-loss} and the fact that $\nabla \overline m_2^\delta = 0$
  on
  $\{ \overline m_2^\delta = -1 + \delta \} \cup \{ \overline
  m_2^\delta = 1\}$ \cite[Theorem 6.19]{lieb-loss}. Thus, in view of
  continuity of $\overline m_2^\delta$ we get (with a slight abuse of
  notation)
  \begin{align}
    {\eps L_\eps F_\eps^{MM}(m) \over a}
    &
      \geq \sqrt{8 \beta}
      \int_0^{\eps^{-1} L_\eps^{-1} a} \left| \left( \sqrt{2} -
      \sqrt{1 + \overline m_2^\delta(x_2)} \right)' \right| \, \d x_2 
      \geq  \sqrt{8 \beta} \int_r^R \left( \sqrt{2} - \sqrt{1 +
      \overline m_2^\delta(x_2)} \right)' \d x_2 
      \notag \\
    &
      - \sqrt{8 \beta} \int_R^{\eps^{-1} L_\eps^{-1} a - r} \left( \sqrt{2} -
      \sqrt{1 + \overline m_2^\delta(x_2)} \right)' \d x_2,
  \end{align}
  which yields the rest of the claim in view of arbitrariness of
  $\delta$.
\end{proof}

\paragraph{Lower bound for the stray field.}  In order to get the
required estimates for the lower bound, we have to extend the
definition of the test potential in a suitable way. Using the same
arguments as in the periodic case we have a similar lower bound for
the stray field energy:
\beq F^S_\eps(m) \geq - \frac{\lambda }{2 \ln |\ln \eps| }\int_{\R^3}
|\grad v|^2 \d x - \frac{\lambda }{\ln |\ln \eps|} \int_{{\widetilde
    D_\eps}} v(x_1,x_2,0) \div \widetilde m_{\delta_\eps /L_\eps} \d x -
{\lambda a \over 2 \pi \eps} \eeq for every
$v \in \mathring H^1 (\R^3)$.
We also define
\begin{align}
  \label{eq:nl}
  n^-_\eps & := \frac{\eps L_\eps}{a} \int_0^{a \eps^{-1} L_\eps^{-1}}
             \widetilde m_{\delta_\eps / L_\eps,2}(t, {\delta_\eps}/{L_\eps})
             \, \d t , \\
  \label{eq:np}
  n^+_\eps & := \frac{\eps L_\eps}{a} \int_0^{a
             \eps^{-1} L_\eps^{-1}} \widetilde m_{\delta_\eps /
             L_\eps, 2}(t,
             b/(\eps L_\eps) - {\delta_\eps}/{L_\eps}) \, \d t. 
\end{align}
The construction of the potential is done in the same way as before
with the only difference that we now do not have the reflection
symmetry for $\widetilde m_{\delta_\eps / L_\eps, 2}$ and have to
consider different distributions of charges near the bottom and the
top boundaries. We will carry out the calculation only near the bottom
boundary, the other calculation is completely analogous. To avoid
cumbersome notation, we will suppress the superscript ``$-$''
throughout the argument.

We would like to find a suitable test potential $v$ that vanishes for
$x_2 > b/(2\eps L_\eps)$ to obtain an appropriate asymptotic lower
bound. Let us define $v$ as follows: for
$x_1\in (0, \eps^{-1} L_\eps^{-1} a)$ we define
\begin{align}
  \label{eq:v12rect}
  v(x_1, x_2, x_3) := v_1\left( \sqrt{x_2^2 + x_3^2} \right) + v_2
  \left( \sqrt{x_2^2 + x_3^2} \right),  
\end{align}
where $v_1$ and $v_2$ are as in \eqref{v1-1d} and \eqref{v2-1d},
respectively, while for $x_1\in (-\infty, 0)$ we extend the definition
of $v$ as
  \begin{align}
    \label{v-2d-1}
    v(x_1, x_2, x_3) := v_1\left( \sqrt{x_1^2+ x_2^2 + x_3^2} \right)
    + v_2 \left(
      \sqrt{x_1^2 + x_2^2 + x_3^2} \right).
  \end{align}
  Finally, for $x_1\in (\eps^{-1} L_\eps^{-1} a, +\infty)$ we extend
  the definition of $v$ as
  \begin{align}
    \label{v-2d-2}
    v(x_1, x_2, x_3) := v_1\left( \sqrt{ \left( \eps^{-1}
    L_\eps^{-1}a - x_1 \right)^2+ x_2^2 + x_3^2} \right) + v_2 \left(
    \sqrt{ \left( \eps^{-1} L_\eps^{-1}a - x_1 \right)^2 + x_2^2 +
    x_3^2} \right)  
  \end{align}
  It is clear that $v \in \mathring H^1 (\R^3)$, and we can compute
  $I :=\int_{\R^3} |\grad v|^2 \d x$ explicitly. First, we split this
  integral into three parts: \beq I= \int_{(-\infty,0) \times \R^2}
  |\grad v|^2 \d x + \int_{(0,\eps^{-1} L_\eps^{-1} a) \times \R^2}
  |\grad v|^2 \d x +\int_{(\eps^{-1} L_\eps^{-1}a, +\infty) \times
    \R^2} |\grad v|^2 \d x.  \eeq It is clear that the first and the
  last integrals in the above expression coincide and the second
  integral was already essentially computed in \eqref{gradest}. Due to
  the symmetry of $v$ it is not difficult to see that \beq
  \int_{(-\infty,0) \times \R^2} |\grad v|^2 \d^3 x = 2 \pi
  \int_0^{\eps^{-1} L_\eps^{-1} b/2} \brackets{\pd{v_1}{r} +
    \pd{v_2}{r}}^2 r^2 \d r = \frac{1}{2\pi} \left( n_\eps^2 -1 +
    \frac{b}{2\eps L_\eps} - \frac{\delta_\eps n_\eps^2}{L_\eps}
  \right).  \eeq Therefore, we obtain \beq I = {a n_\eps^2 \over 2 \pi
    \eps L_\eps} \ln \left( {L_\eps \over \delta_\eps} \right) + {a
    \over 2 \pi \eps L_\eps} \ln \left( {b \over 2 \eps L_\eps}
  \right)+ \frac{1}{\pi} \left( n_\eps^2 -1 + \frac{b}{2\eps L_\eps} -
    \frac{\delta_\eps n_\eps^2}{L_\eps} \right).  \eeq

  Next we compute \beq J := \int_{{\widetilde D_\eps}} v(x_1,x_2,0) \,
  \div \widetilde m_{\delta_\eps/ L_\eps} \d x.  \eeq Note that for
  $x_1 \in (0,\eps^{-1} L_\eps^{-1}a)$ our function $v(x_1, x_2, 0)$
  depends only on $x_2$, and $\widetilde m_{\delta_\eps/L_\eps}$
  vanishes at the boundary of $\widetilde D_\eps$.  Therefore, with a
  slight abuse of notation we have
\begin{multline}
  J = \int_0^{\eps^{-1} L_\eps^{-1}a} \int_0^{\eps^{-1} L_\eps^{-1}
    b/2} v(x_2,0) \left(\partial_1 \widetilde m_{\delta_\eps/L_\eps,
      1}(x_1,x_2) + \partial_2 \widetilde m_{\delta_\eps/L_\eps,
      1}(x_1,x_2) \right) \, \d x_1 \d x_2 \\ = \frac{a}{\eps L_\eps}
  \int_0^{\eps^{-1} L_\eps^{-1} b} v(x_2,0) \partial_2 \overline
  m_{\delta_\eps/L_\eps, 2}(x_2) \, \d x_2,
\end{multline}
where
$\overline m_{\delta_\eps/L_\eps, 2}(x_2) := \frac{\eps L_\eps}{a}
\int_0^{\eps^{-1} L_\eps^{-1}a} \widetilde m_{\delta_\eps/L_\eps, 2}
(x_1,x_2) \, \d x_1$. Using the same arguments as for the periodic
case, we obtain a formula analogous to \eqref{vpotenergy}, with
$m_{\eps,2}$ replaced by $\overline m_{\delta_\eps/L_\eps, 2}$:
  \begin{align}
    \label{vpotenergy-rect} 
    \int_0^{\eps^{-1}L_\eps^{-1} b/2} v(x_2, 0) \left( \overline
    m_{\delta_\eps/L_\eps, 2} (x_2) \right)' \d x_2 = -{1 
    \over 2 \pi} \ln \left( {b \over 2 \eps L_\eps} \right) -
    {n_\eps^2 \over 2 \pi} \ln \left( {L_\eps \over \delta_\eps}
    \right) \notag \\
    + {n_\eps \over 2 \pi}  \int_{\delta_\eps / L_\eps}^1
    \left( \overline m_{\delta_\eps/L_\eps, 2} (x_2) \right)'  \ln x_2
    \d x_2 +  {1 \over 2 \pi}  \int_1^{\eps^{-1} L_\eps^{-1} b/2} {1 -
    \overline m_{\delta_\eps/L_\eps, 2} (x_2)  \over x_2} \d x_2.
  \end{align}
  
  We now would like to obtain an analog of \eqref{bubu} and need to
  estimate the last two terms in the right-hand side of
  \eqref{vpotenergy-rect}. The first term can be bounded as
  follows:
  \begin{align}
    \label{eq:n11}
    \Bigg| {n_\eps \over 2 \pi} \int_{\delta_\eps / L_\eps}^1
    & \left(
      \overline m_{\delta_\eps/L_\eps, 2} (x_2) \right)' \ln x_2 \d
      x_2 \Bigg| \leq \frac{\eps L_\eps}{2 \pi a} \left|
      \int_0^{\eps^{-1} L_\eps^{-1}a} 
      \int_{\delta_\eps / L_\eps}^1 \partial_{2} m_{2} (x_1, x_2)
      \widetilde
      \eta_{\delta_\eps / L_\eps} (x_1) \ln x_2 \d x_2 \, d x_1
      \right| \notag \\
    &\leq \frac{\eps L_\eps}{4 \pi a} \int_0^{\eps^{-1} L_\eps^{-1}a}
      \int_{\delta_\eps / 
      L_\eps}^1 \left(  |\partial_{2} m_{2} (x_1, x_2)|^2 + |\ln
      x_2|^2 \right) \d x_2
      \, d x_1 \leq \frac{\eps L_\eps}{2 \pi a} F_\eps^{MM} (m) + C,
  \end{align}
  for some universal $C > 0$.  Similarly, we can obtain
  \begin{align}
  \label{eq:n22}
    {1 \over 2 \pi}
    & \int_1^{\eps^{-1} L_\eps^{-1} b/2} {1 -
      \overline m_{\delta_\eps/L_\eps,2} (x_2)  \over x_2} \d x_2 =
      \frac{\eps L_\eps}{2 \pi a} 
      \int_0^{\eps^{-1} L_\eps^{-1}a}  \int_1^{\eps^{-1} L_\eps^{-1} b/2} {1
      - m_{2} (x_1, x_2) \widetilde 
      \eta_{\delta_\eps / L_\eps} (x_1) \over x_2}  \d x_2 \, \d x_1
      \notag \\     
    &\leq    \frac{\eps L_\eps}{2 \pi a}
      \int_{\delta_\eps / L_\eps}^{\eps^{-1} L_\eps^{-1}a - \delta_\eps / L_\eps}
      \int_1^{\eps^{-1} L_\eps^{-1} b/2} {1 - m_{2} (x_1, x_2) \over x_2}  \d
      x_2 \, \d x_1 + {2 \eps \delta_\eps \over \pi a} \ln \left( {b
      \over 2 \eps  
      L_\eps } \right) \notag \\
    & \qquad \leq \frac{\eps L_\eps}{2\pi \beta a}
      F_\eps^{MM} (m) + C, 
  \end{align}
  for some universal $C > 0$, provided that $\eps$ is small enough
  independently of $m$.  Thus, after some straightforward algebra we
  arrive at the following bound for $J$: \beq J \leq - \frac{a}{2 \pi
    \eps L_\eps} \left[ \ln \left( {b \over 2 \eps L_\eps} \right) +
    n_\eps^2 \ln \left( {L_\eps \over \delta_\eps} \right) \right] + C
  \left( \frac{1}{\eps L_\eps} + F_{\eps}^{MM}(m) \right), \eeq for
  some $C > 0$ and all $\eps$ small enough independent of $m$.
  
  Using the estimates for $I$ and $J$ above, and combining them with
  the estimates for the similarly defined potential that vanishes for
  $x_2 < b/(2\eps L_\eps)$, after some tedious algebra we obtain the
  following asymptotic lower bound for the stray field energy:
  \begin{align}
    F^S_\eps(m) \geq \frac{\lambda a \ln|\ln \eps|}{2 \pi \eps |\ln \eps|} 
    \Big(  |n_\eps^-|^2 +  |n_\eps^+ |^2 -1\Big)
    - {C \over \ln|\ln \eps|} \left( F_{\eps}^{MM}(m) +
    \frac{\ln  |\ln \eps| }{\eps |\ln \eps|} \right).   
  \end{align}

\paragraph{Upper bound for stray field.}
To derive an asymptotically sharp upper bound for the nonlocal energy,
we want to estimate from above the integral \beq W := {\lambda \over 8
  \pi \ln |\ln \eps|} \int_{\widetilde D_\eps} \int_{\widetilde
  D_\eps} {\nabla \cdot \widetilde m_{\delta_\eps / L_\eps}(x) \nabla
  \cdot \widetilde m_{\delta_\eps / L_\eps}(y) \over |x - y|} \d x \d
y, \eeq where
$\widetilde m_{\delta_\eps / L_\eps}(x) := m_\eps(x) \widetilde
\eta_{\delta_\eps / L_\eps}(x)$, and choose the test sequence
\begin{align}
  \label{eq:mepstest}
  m_\eps (x_1,x_2) := \left( \sqrt{1-m_{\eps,2}^2(x_2)},
  m_{\eps,2}(x_2) \right), 
\end{align}
in which $m_{\eps,2}$ is as defined by the one-dimensional
construction in Sec. \ref{sec:proof-theor-23}. We then obtain that
$W = {\lambda \over 8 \pi \ln |\ln \eps|} I$, where
\begin{multline}
  I = I_1 + I_2 + I_3 := \int_{\widetilde D_\eps} \int_{\widetilde
    D_\eps} \frac{\partial_1 \widetilde
    \eta_{\delta_\eps/L_\eps}(x_1,x_2) m_{\eps,1}(x_2) \ \partial_1
    \widetilde \eta_{\delta_\eps/L_\eps}(\xi_1,\xi_2)
    m_{\eps,1}(\xi_2) }{|x - \xi|} \d x \, \d \xi \\ + \int_{\widetilde
    D_\eps} \int_{\widetilde D_\eps} \frac{ \partial_2 (\widetilde
    \eta_{\delta_\eps/L_\eps}(x_1,x_2) m_{\eps,2}(x_2))\
    \partial_2(\widetilde \eta_{\delta_\eps/L_\eps}(\xi_1,\xi_2)
    m_{\eps,2}(\xi_2))}{|x - \xi|} \d x \, \d \xi \\+ 2
  \int_{\widetilde D_\eps} \int_{\widetilde D_\eps} \frac{\partial_1
    \widetilde \eta_{\delta_\eps/L_\eps}(x_1,x_2) m_{\eps,1}(x_2) \
    \partial_2(\widetilde \eta_{\delta_\eps/L_\eps}(\xi_1,\xi_2)
    m_{\eps,2}(\xi_2))}{|x - \xi|} \d x \, \d \xi.
\end{multline}
We see that the middle integral $I_2$ is asymptotically equivalent to
the one computed in the periodic case. Therefore, it is enough to
estimate the first and the last integrals and show that they only give
a negligible contribution into the stray field energy in the limit.

\begin{figure}
  \centering
  \includegraphics[width=8.5cm]{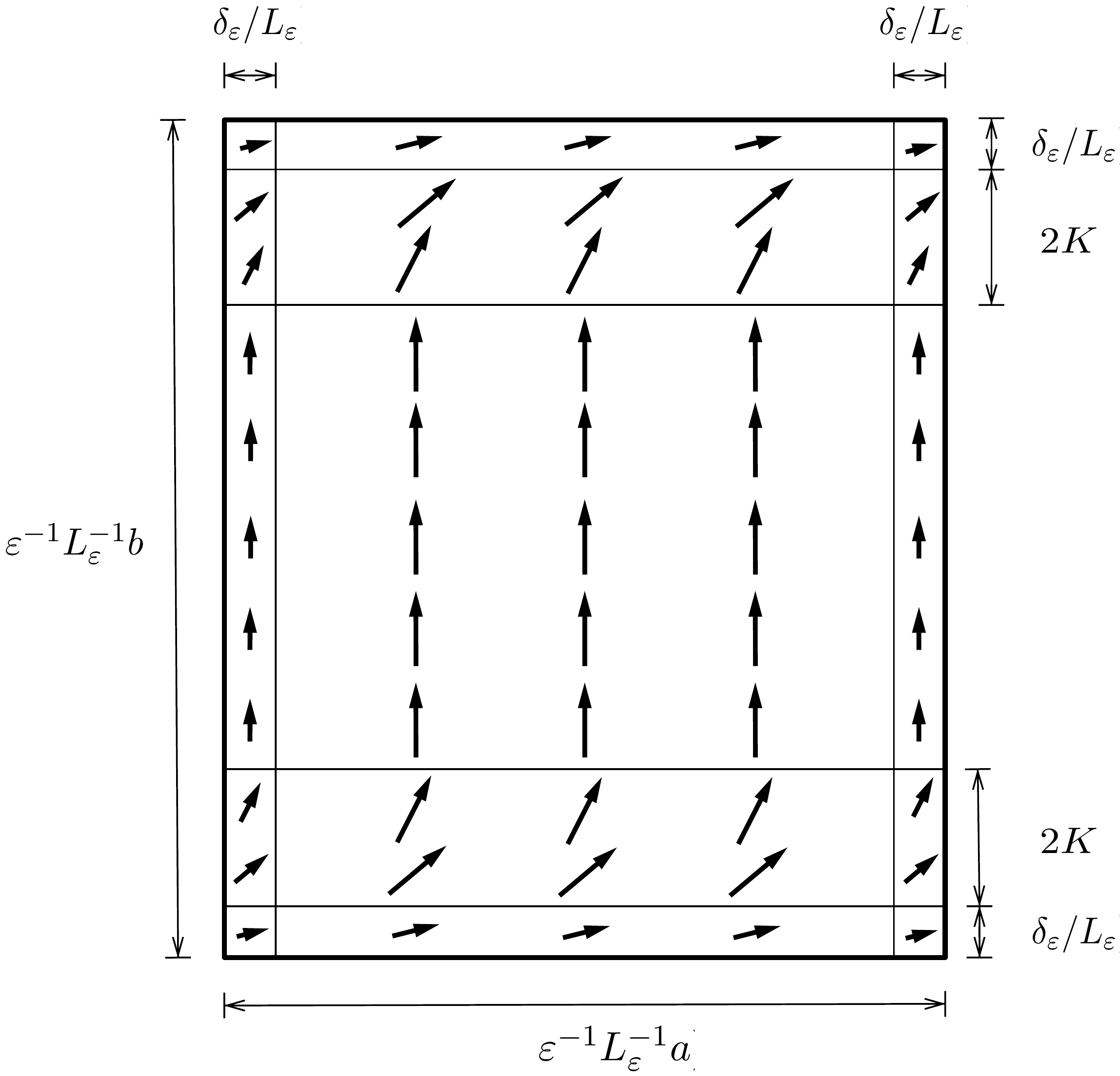}
  \caption{Schematics of the truncated test magnetization
    configuration $\widetilde m_{\delta_\eps / L_\eps}$ used in the
    upper bound construction of Sec. \ref{sec:rect}.}
  \label{fig:constr}
\end{figure}

We now estimate the first intergal $I_1$. Using the definition of
$\widetilde \eta_{\delta_\eps/L_\eps}$ we obtain that
$\partial_1 \widetilde \eta_{\delta_\eps/L_\eps}(x_1,x_2) =0$ for
$ \delta_\eps L_\eps^{-1} < x_1 < \eps^{-1} L_\eps^{-1} b -
\delta_\eps L_\eps^{-1}$. Moreover, outside this interval
$|\partial_1 \widetilde \eta_{\delta_\eps/L_\eps}(x_1,x_2)| \leq
L_\eps/\delta_\eps$. We also know that $m_{\eps,1}(x_2) = 0$ for
$x_2 \in (0, \delta_\eps/L_\eps) \cup (2K+ \delta_\eps/L_\eps,
\eps^{-1} L_\eps^{-1} b - 2K - \delta_\eps/L_\eps) \cup (\eps^{-1}
L_\eps^{-1} b - \delta_\eps/L_\eps)$, where $K$ is the same constant
as in the one-dimensional construction. Therefore, by direct
computation we can estimate for all $\eps$ sufficiently small
\begin{multline}
  I_1 \leq C \left( \frac{L_\eps}{\delta_\eps} \right)^2
  \int_{0}^{2K+\delta_\eps L_\eps^{-1} } \int_{0}^{2K+\delta_\eps
    L_\eps^{-1}} \int_0^{\delta_\eps L_\eps^{-1}} \int_0^{\delta_\eps
    L_\eps^{-1}} {\d x _1 \d \xi_1 \d x_2 \d \xi_2 \over \sqrt{(x_1 -
      \xi_1)^2 + (x_2 - \xi_2)^2}} \leq CK \ln
  \left(\frac{L_\eps}{\delta_\eps}\right),
\end{multline}
for some universal $C > 0$.  Similarly, the last integral $I_3$ can be
estimated as
\begin{multline}
  I_3 \leq C \left( \frac{L_\eps}{\delta_\eps} \right)
  \int_{\delta_\eps L_\eps^{-1} }^{2K + \delta_\eps L_\eps^{-1} }
  \int_0^{\eps^{-1} L_\eps^{-1} a} \int_0^{2K + \delta_\eps
    L_\eps^{-1} } \int_0^{\delta_\eps L_\eps^{-1}} \frac{\d x_1 \d
    x_2 \d \xi_1 \d \xi_2 }{\sqrt{(x_1-\xi_1)^2 + (x_2-\xi_2)^2}}  \\
  + \left( \frac{L_\eps}{\delta_\eps} \right)^2 \int_{0}^{\delta_\eps
    L_\eps^{-1} } \int_0^{\eps^{-1} L_\eps^{-1} a} \int_0^{2K +
    \delta_\eps L_\eps^{-1} } \int_0^{\delta_\eps L_\eps^{-1}}
  \frac{\d x_1 \d x_2 \d \xi_1 \d \xi_2}{\sqrt{(x_1-\xi_1)^2 +
      (x_2-\xi_2)^2}} \leq C K \ln\left( \frac{1}{\eps}
  \right),
\end{multline}
again, for some universal $C > 0$ and all $\eps$ small enough.

\begin{proof}[Proof of Theorem \ref{t:rect}.] 
  We can combine the lower bounds for $F_\eps^{MM}$ and $F_\eps^S$ and
  proceed in the same way as in the one-dimensional case. There is a
  slight mismatch, as the definition of $n_\eps^\pm$ uses the average
  of $\widetilde m_{\delta_\eps/L_\eps, 2}$, while the lower bound
  \eqref{FepsMMlb2d} for $F_\eps^{MM}$ uses $\overline m_{\eps,
    2}$. However, we observe that
  \begin{multline} \label{use:ineq} \left| \frac{\eps L_\eps}{a}
      \int_0^{\eps^{-1} L_\eps^{-1}a} \widetilde
      m_{\delta_\eps/L_\eps, 2} (x_1, x_2)
      \d x_1 - \overline m_{\eps, 2} (x_2) \right| \\
    \leq \frac{\eps L_\eps}{a} \int_0^{\eps^{-1} L_\eps^{-1}a}
    |m_{\eps, 2}(x_1,x_2)| (1-\tilde \eta_{\delta_\eps/L_\eps}
    (x_1,x_2)) \, \d x_1 \leq C \eps \delta_\eps,
  \end{multline}
  for some $C > 0$ independent of $\eps$, and, therefore,
  asymptotically we can interchange the average of
  $\widetilde m_{\delta_\eps/L_\eps,2}$ with $\overline m_{\eps, 2}$
  in the formula in \eqref{FepsMMlb2d} and arrive at the full lower
  bound as in the one-dimensional case.  Using in addition the upper
  bound construction, the proof of \eqref{minEeps1d2} follows exactly
  as in the proof of Theorem \ref{t:E1deps} with the help of Lemma
  \ref{l:MM2d}. Convergence of $m_\eps$ to $e_2$ trivially follows
  from positivity of the stray field energy and boundedness of
  $E_\eps(m_\eps)$ as $\eps \to 0$.
  
  Assuming $m_\eps$ is a minimizer of $E_\eps$, in the same way as in
  the proof of the Theorem \ref{t:E1deps} it follows that
  $n_\eps^- \to n_0$ and $n_\eps^+ \to n_0$, therefore we have
  \begin{align}
    \overline m_{\delta_\eps/L_\eps,2} ({\delta_\eps}/{L_\eps}) \to n_0
    \quad \hbox{ and } \quad \overline m_{\delta_\eps/L_\eps,2}
    ({b}/({\eps L_\eps}) -{\delta_\eps}/{L_\eps}) \to n_0.    
  \end{align}
  Using the inequality in \eqref{use:ineq} and recalling
  \eqref{Fepsmin}, we obtain the desired result.
\end{proof}

\bibliographystyle{plain}
\bibliography{../../nonlin,../../mura,../../stat}

\end{document}